\documentclass[11pt]{article}

\usepackage[margin=1in]{geometry}
\usepackage{amsmath,amssymb,amsthm,mathtools}
\usepackage{enumitem}
\usepackage{booktabs,tabularx,array}
\usepackage{microtype}
\usepackage{xcolor}
\usepackage[
  colorlinks=true,
  linkcolor=blue,
  citecolor=blue,
  urlcolor=blue,
  filecolor=blue,
  linktoc=all,
  bookmarks=true,
  bookmarksopen=true,
  bookmarksnumbered=true,
  pdfpagemode=UseOutlines
]{hyperref}
\hypersetup{
  pdftitle={Clumsy and Careless: Stationary-Entry Flux in Non-monotone Coupon Collectors},
  pdfauthor={Christopher D. Long},
  pdfsubject={Stationary-entry flux for non-monotone coupon collectors},
  pdfkeywords={coupon collector, terminal-condition flux calculus, stationary-entry flux, Markov chains}
}
\usepackage{bookmark}

\newtheorem{theorem}{Theorem}[section]
\newtheorem{lemma}[theorem]{Lemma}
\newtheorem{proposition}[theorem]{Proposition}
\newtheorem{corollary}[theorem]{Corollary}
\theoremstyle{definition}
\newtheorem{definition}[theorem]{Definition}
\newtheorem{remark}[theorem]{Remark}

\newcommand{\Pbb}{\mathbb P}
\newcommand{\Ebb}{\mathbb E}
\newcommand{\Nbb}{\mathbb N}
\newcommand{\Rbb}{\mathbb R}
\newcommand{\1}{\mathbf 1}
\newcommand{\TV}{\mathrm{TV}}
\newcommand{\Exp}{\mathrm{Exp}}
\newcommand{\Bin}{\mathrm{Bin}}
\newcommand{\Var}{\mathrm{Var}}

\newcommand{\OO}{\mathrm{O}}

\newcommand{\floor}[1]{\left\lfloor #1\right\rfloor}
\newcommand{\ceil}[1]{\left\lceil #1\right\rceil}

\newcommand{\abs}[1]{\left|#1\right|}

\title{Clumsy and Careless:\\
Stationary-Entry Flux in Non-monotone Coupon Collectors}
\author{Christopher D. Long}
\date{}

\begin{document}
\maketitle

\begin{abstract}
We study non-monotone coupon collectors in which collected coupons may be lost
or reset before completion.  The organizing idea is to identify how the process
first reaches completion.  In monotone collectors this is usually controlled by
a terminal cloud: the residual set or count of terminal conditions still
unsatisfied.  In the recurrent loss models considered here, completion is
instead governed by rare new entries into the all-present state, except in the
reset-button model, where exact resets give an independent-excursion structure.

We first prove a finite stationary-entry theorem: stationary-entry flux, fast
mixing, and one-block clump control imply an exponential hitting-time law.  For
comparison, we revisit the reset-button collector of Jockovi\'c and Todi\'c.
Exact regeneration at the empty state gives the identity
\(s_n=\Ebb q_n^{C_n}\), recovers the known beta-function expectation, and
yields rare-success exponential limits.

The stationary-entry theorem is then applied to the fixed-loss clumsy and
careless coupon collectors.  For the clumsy collector, with selected-type loss
probability \(p\) and \(q=1-p\), the stationary-entry flux is \(p q^n\), and
\[
        p q^n T_n \Rightarrow \Exp(1).
\]
For the careless collector, with the same fixed loss probability \(p\) and
\(q=1-p\), under the post-loss convention of Cruciani and Dudeja, the
completion scale is governed by the stationary high tail of the count chain.
Writing \(\mu_n\) for the stationary-entry flux, we prove
\[
        \mu_n\sim \frac{1}{(q;q)_\infty}
        \frac{n!}{n^n}q^{n(n+1)/2},
        \qquad
        \mu_n T_{n,p}\Rightarrow \Exp(1).
\]
Thus
\[
        \log \Ebb T_{n,p}
        =\frac{n(n+1)}{2}|\log q|+n+\OO(\log n).
\]
The careless scale is therefore not the one-point marginal scale suggested by
the natural independence heuristic.  The extra factor comes from an ordered lucky climb:
a run through the upper tail in which each step gains one coupon and all coupons
currently present survive the loss step.  We also analyze a combined
clumsy-careless model, showing that the same high-tail entry
mechanism is stable under selected-type refresh followed by global thinning.
\end{abstract}

\setcounter{tocdepth}{2}
\tableofcontents

\section{Introduction}

Classical coupon collector problems are monotone: once a coupon type has been collected, that part of the task remains complete; see, for example, \cite{ErdosRenyi,Feller,Holst}.  A standard proof strategy studies a terminal cloud \(W_n(t)\), by which we mean a residual set, count, or field of terminal conditions that remain unsatisfied at time \(t\).  Completion is then the event
\[
        \{T_n\le t\}=\{W_n(t)=0\},
\]
and a Poisson limit for \(W_n(t)\) gives the usual Gumbel-type laws.

This paper studies coupon collectors where collected coupons may be lost.  The target set \(A_n\), usually the all-present state, is no longer absorbing.  The first step is therefore to identify how the process reaches completion.  In an exactly reset model, completion is a successful reset-free excursion.  In the clumsy and careless loss models, completion occurs when the chain newly enters the all-present state from outside.  Related entry formulations appear in the expiring coupon collector, where completion is governed by rare entries of a sliding window into the surjective state \cite{LongExpiring}.

A recurring point is that the reciprocal stationary mass of \(A_n\) is not necessarily the right time scale: visits to \(A_n\) may arrive in clumps.  The scale is governed not by how often the chain is found in \(A_n\), but by how often it newly crosses into that state from outside.  We call this rate the stationary-entry flux.  In the clumsy model the entry rate is \(p\) times the stationary mass \(q^n\), so the time scale is \(1/(p q^n)\), rather than \(q^{-n}\).  In the careless model the effect is much larger, because the stationary high tail is not governed by the independent one-point product heuristic.  Completion is governed by an ordered lucky climb: near the top of the count chain, the process must repeatedly draw a missing coupon while all currently held coupons survive the loss step.  The associated product is
\[
        \prod_{k=0}^{n-1}\frac{n-k}{n}q^{k+1}
        =\frac{n!}{n^n}q^{n(n+1)/2}.
\]

The stationary-entry principle used below belongs to a classical line of work on exponential hitting laws and metastability for Markov chains.  Aldous proved that rapid convergence to stationarity gives almost exponential hitting times for rare sets \cite{Aldous1982}, with refinements for rare events in reversible chains by Aldous and Brown \cite{AldousBrown1992}.  Broader metastability approaches include the pathwise viewpoint of Cassandro, Galves, Olivieri, and Vares \cite{CassandroGalvesOlivieriVares1984}, the large-deviation treatment of Olivieri and Vares \cite{OlivieriVares2005}, and the potential-theoretic approach of Bovier and den Hollander \cite{BovierDenHollander2015}.  Our contribution is a finite, block-checkable stationary-entry criterion tailored to coupon-loss processes, together with sharp entry-rate computations for the clumsy, careless, and combined collectors.

\subsection{Main results and mechanisms}

The symbols used for survival or refresh probabilities are local to the corresponding model sections: \(q_n\) is used in the reset-button model, \(q=1-p\) in the clumsy and careless fixed-loss models, and \(Q=1-\alpha\), \(S=1-\beta\) in the combined model.  For \(a\in(0,1)\), write
\[
        (a;a)_\infty:=\prod_{r=1}^{\infty}(1-a^r).
\]
The main fixed-parameter stationary-entry laws are the following, where each \(\mu_n\) denotes the corresponding stationary-entry flux:
\begin{align*}
        p q^n T_n^{\mathrm{clumsy}} &\Rightarrow \Exp(1),\\
        \mu_n^{\mathrm{careless}}T_{n,p}^{\mathrm{careless}} &\Rightarrow \Exp(1),
        \qquad
        \mu_n^{\mathrm{careless}}
        \sim
        \frac{1}{(q;q)_\infty}\frac{n!}{n^n}q^{n(n+1)/2},\\
        \mu_n^{\alpha,\beta}T_n^{\alpha,\beta} &\Rightarrow \Exp(1),
        \qquad
        \mu_n^{\alpha,\beta}
        \sim
        \frac{1}{(S;S)_\infty}\frac{n!}{n^n}Q^nS^{n(n+1)/2}.
\end{align*}
The proofs separate according to how completion is reached:
\begin{center}
\footnotesize
\begin{tabularx}{\textwidth}{@{}>{\raggedright\arraybackslash}p{0.19\textwidth}>{\raggedright\arraybackslash}p{0.22\textwidth}>{\raggedright\arraybackslash}p{0.25\textwidth}>{\raggedright\arraybackslash}X@{}}
\toprule
Model & Target set & Mechanism & Scale or method \\
\midrule
Classical monotone collector & all coupons present & terminal cloud & Poisson terminal-cloud limit \\
Reset-button collector & all standard coupons present before the next reset & reset-free successful excursion & exact regeneration through \(s_n=\Ebb q_n^{C_n}\) \\
Clumsy collector & all coupons present & stationary-entry flux & \(\mu_n=pq^n\) \\
Careless collector & all coupons present post-loss & high-tail stationary-entry flux & \(\mu_n\sim(q;q)_\infty^{-1}(n!/n^n)q^{n(n+1)/2}\) \\
Combined collector & all coupons present post-thinning & high-tail stationary-entry flux & \(\mu_n\sim(S;S)_\infty^{-1}(n!/n^n)Q^nS^{n(n+1)/2}\) \\
\bottomrule
\end{tabularx}
\end{center}

The first contribution is the stationary-entry module
\[
\boxed{
\text{stationary-entry flux}+\text{fast mixing}+\text{one-block clump control}
\Longrightarrow \text{exponential hitting law}.}
\]
The theorem is stated in finite block form, so the hypotheses can be checked directly in coupon-loss chains without reversibility.

The reset-button collector of Jockovi\'c and Todi\'c \cite{JockovicTodic2024} is structurally different from the stationary-entry models.  Their paper introduces the model, gives finite block formulas for the unequal-probability distribution, and derives the beta-function expectation in the equal-probability continuing-after-reset case.  Here we isolate the exact regeneration mechanism: each reset returns the process to the empty state, and a reset-free excursion succeeds with probability
\[
        s_n=\Ebb q_n^{C_n},
\]
where \(C_n\) is the ordinary coupon collector time measured in standard-coupon draws.  This gives a compact probability generating function and rare-success exponential limits.  Additional reset asymptotic regimes are recorded in Appendix~\ref{app:reset-asymptotic-regimes}.

For the clumsy collector of Aggarwal and Garoni \cite{AggarwalGaroni}, each step selects one type and refreshes it to present with probability \(q=1-p\) and absent with probability \(p\).  The all-present state has stationary mass \(q^n\), but visits occur in clumps, and the stationary-entry flux is \(pq^n\).  Hence
\[
        p(1-p)^nT_n\Rightarrow\Exp(1),
\]
with uniform exponential tails and convergence of all fixed moments.  Equivalently, after centering by its mean and scaling by its standard deviation, \(T_n\) converges to \(\Exp(1)-1\), not to a Gumbel law.  A contemporaneous and independent manuscript of Attrill and Garoni, submitted to arXiv concurrently with this work, treats the clumsy collector by language-generating-function methods; its fixed-\(p\) results overlap the clumsy result above, and it also analyzes moving-\(p\) regimes \cite{AttrillGaroni2026}.

For the careless collector of Cruciani and Dudeja \cite{CrucianiDudeja}, we use their post-loss convention: one coupon is drawn, then every currently held coupon is independently lost with probability \(p\), and completion is checked after that loss step.  Their mean-field and lower-bound scale is the product-marginal scale \(q_*^{-n}\), where
\[
        q_*:=\frac{1-p}{1-p+np}.
\]
The fixed-\(p\) stationary-entry scale is instead
\[
        \Ebb T_{n,p}
        \sim (q;q)_\infty\frac{n^n}{n!}q^{-n(n+1)/2}.
\]
Thus the logarithmic scale changes from \(n\log n+O(n)\) under the marginal heuristic to
\[
        \frac12|\log q|n^2+O(n).
\]
The extra factor is not a constant-order correction; it is the ordered lucky-climb cost through the upper tail of the count chain.

Finally, the combined clumsy-careless model verifies the stability of the high-tail entry mechanism.  A selected coupon is refreshed successfully with probability \(Q\), and then all present coupons survive a global thinning with probability \(S\).  The same high-tail entry mechanism gives
\[
        \mu_n^{\alpha,\beta}
        \sim (S;S)_\infty^{-1}\frac{n!}{n^n}Q^nS^{n(n+1)/2},
\]
followed by the same exponential hitting and moment limits.  The repetitive verification details are deferred to Appendix~\ref{app:combined-high-tail}.

\section{A stationary-entry flux module}\label{sec:stationary-entry-module}

Let $X_n=(X_n(t))_{t\ge0}$ be a finite irreducible Markov chain, observed in discrete time, with transition matrix $P_n$ and stationary law $\pi_n$.  Let $A_n$ be a rare target set.  Define
\[
        T_{A_n}:=\inf\{t\ge0:X_n(t)\in A_n\}.
\]
For $t\in\mathbb Z_{\ge1}$, let
\[
        E_t:=\1\{X_n(t-1)\notin A_n,\ X_n(t)\in A_n\}
\]
be the indicator of a new entry into the target set.  The stationary-entry flux is
\[
        \mu_n:=\Pbb_{\pi_n}(E_1=1).
\]

\begin{definition}[Mixing coefficient]
For $h\ge0$, define
\[
        \alpha_n(h):=\sup_x\|P_n^h(x,\cdot)-\pi_n\|_{\TV}.
\]
\end{definition}

\begin{lemma}[Path-event total-variation comparison]\label{lem:path-tv-comparison}
Let $P$ be a Markov transition matrix on a finite state space, and let $\eta$ and $\zeta$ be two initial laws.  For any integer $b\ge0$, the laws on paths $(X(0),\ldots,X(b))$ generated from $\eta$ and $\zeta$ differ in total variation by at most $\|\eta-\zeta\|_{\TV}$.  Consequently, for every event $B$ measurable with respect to $(X(0),\ldots,X(b))$,
\[
        \left|\Pbb_\eta(B)-\Pbb_\zeta(B)\right|
        \le \|\eta-\zeta\|_{\TV}.
\]
In particular, if $B$ is the event that a block of length $b$ contains at least one new entry into $A_n$, then its probability from an initial law within $\varepsilon$ of stationarity differs from its stationary probability by at most $\varepsilon$.
\end{lemma}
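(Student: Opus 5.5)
The plan is to write the path law explicitly as a mixture over the initial state and then apply the triangle inequality. For an initial law $\eta$, the law of $(X(0),\ldots,X(b))$ assigns to a path $(x_0,\ldots,x_b)$ the mass
\[
        \Pbb_\eta(X(0)=x_0,\ldots,X(b)=x_b)=\eta(x_0)\prod_{i=1}^{b}P(x_{i-1},x_i).
\]
The same formula holds with $\zeta$ in place of $\eta$. I will use the convention $\|\nu\|_{\TV}=\sup_B|\nu(B)|=\tfrac12\sum_x|\nu(x)|$ for signed measures of total mass zero; this is consistent with the definition of $\alpha_n(h)$.

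\emph{Step 1.} Sum the absolute difference over all paths:
\[
        \sum_{x_0,\ldots,x_b}\left|\eta(x_0)-\zeta(x_0)\right|\prod_{i=1}^{b}P(x_{i-1},x_i).
\]
Summing out $x_b,x_{b-1},\ldots,x_1$ in turn, each row of $P$ sums to $1$, so the whole expression collapses to $\sum_{x_0}|\eta(x_0)-\zeta(x_0)|$. Halving both sides shows that the path laws differ in total variation by exactly $\|\eta-\zeta\|_{\TV}$, which in particular gives the upper bound. The argument could equally be phrased as a data-processing inequality: the path is a Markov-kernel image of $X(0)$, and total variation cannot increase under a kernel. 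I will present the explicit sum, because it uses nothing beyond finiteness.

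\emph{Step 2.} For an event $B$ measurable with respect to $(X(0),\ldots,X(b))$, write
\[
        \Pbb_\eta(B)-\Pbb_\zeta(B)=\sum_{\text{paths}\in B}(\text{difference of path masses}).
\]
The supremum characterisation of total variation then bounds its absolute value by the path-law total variation distance, and Step~1 bounds that by $\|\eta-\zeta\|_{\TV}$.

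\emph{Step 3, the specialisation.} The event that a block of length $b$ contains at least one new entry, say $\bigcup_{t=1}^{b}\{E_t=1\}$, depends only on $(X(0),\ldots,X(b))$. Apply Step~2 with $\zeta=\pi_n$ and $\|\eta-\pi_n\|_{\TV}\le\varepsilon$. If the block starts at a later time $s$, the Markov property lets me use the law of $X(s)$ as the new initial law, for instance $P^h(x,\cdot)$, which is within $\alpha_n(h)$ of $\pi_n$. Stationarity makes the block probability under $\pi_n$ independent of $s$.

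No step is a real obstacle. The only points needing care are fixing the total-variation normalisation so that no factor of $2$ appears, and noting that block events starting at a later time reduce to the time-$0$ case by the Markov property.
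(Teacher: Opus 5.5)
Your proof is correct and is essentially the paper's argument. The paper cites contraction of total variation under the Markov kernel $x_0\mapsto\Pbb_{x_0}\bigl((X(0),\ldots,X(b))\in\cdot\bigr)$, and your row-sum collapse is that contraction written out explicitly. Your computation even gives equality of the two distances, as it must since $X(0)$ is a coordinate of the path; this is slightly sharper than needed and harmless.
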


\begin{proof}
The path law from an initial distribution $\eta$ is obtained from $\eta$ by applying the Markov kernel
\[
        x_0\mapsto \Pbb_{x_0}\bigl((X(0),\ldots,X(b))\in\cdot\bigr).
\]
Markov kernels contract total variation distance.  Hence the two path laws are within $\|\eta-\zeta\|_{\TV}$, and the event bound follows from the definition of total variation distance.
\end{proof}

The following theorem is stated with block inputs.  It is a coupon-loss version of the classical almost-exponential hitting principle \cite{Aldous1982}, with the entry flux replacing stationary mass in order to account for clumping.  We use total-variation mixing and coupling estimates in the standard form; see, for example, \cite{LevinPeres}.  The block formulation makes the module directly checkable in concrete coupon-loss models.

\begin{theorem}[Stationary-entry hitting]\label{thm:stationary-entry}
Let $x_n$ be a sequence of initial states.  Assume
\[
        \pi_n(A_n)\to0,
        \qquad
        \mu_n\to0.
\]
Suppose there are integers $b_n,h_n,r_n\to\infty$ such that
\[
        h_n=o(b_n),\qquad b_n\mu_n\to0,
        \qquad \alpha_n(h_n)=o(b_n\mu_n),
\]
and such that the following two assumptions hold.
\begin{enumerate}[label=\textnormal{(M\arabic*)},leftmargin=2.4em]
\item \textbf{Stationary one-block law.}  With
\[
        N_n^{(b)}:=\sum_{t=1}^{b_n}E_t,
\]
one has
\[
        \Pbb_{\pi_n}(N_n^{(b)}\ge1)=b_n\mu_n(1+o(1)).
\]
\item \textbf{Negligible initial burn-in.}  For this sequence $x_n$,
\[
        \Pbb_{x_n}(T_{A_n}\le r_n)=o(1),
        \qquad
        \|P_n^{r_n}(x_n,\cdot)-\pi_n\|_{\TV}=o(1),
        \qquad
        r_n\mu_n=o(1).
\]
\end{enumerate}
Then
\[
        \mu_n T_{A_n}\Rightarrow \Exp(1)
\]
under $\Pbb_{x_n}$.
\end{theorem}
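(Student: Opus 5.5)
We fix $s>0$ and show that $\Pbb_{x_n}(T_{A_n}>s/\mu_n)\to e^{-s}$.

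\textbf{Reduction to stationarity.} By (M2) we discard the first $r_n$ steps. The event $T_{A_n}\le r_n$ has probability $o(1)$. On its complement, consider the process after time $r_n$. Its law is within $o(1)$ in total variation of the stationary path law, by Lemma~\ref{lem:path-tv-comparison} applied to events of the path over a window of length about $s/\mu_n$. Since $r_n\mu_n=o(1)$, the time shift does not affect the limit. A small point here: after time $r_n$, ``first hitting'' must be replaced by ``first entry or presence in $A_n$''. At time $r_n$ the chain is in $A_n$ with probability at most $\pi_n(A_n)+o(1)=o(1)$. Outside that event, the first hitting time after $r_n$ equals the first new entry time $\inf\{t: E_t=1\}$. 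So it suffices to prove, under $\Pbb_{\pi_n}$ and conditioned on $X(0)\notin A_n$ (a conditioning that costs $o(1)$), that
\[
\Pbb_{\pi_n}\bigl(\textstyle\sum_{t\le s/\mu_n}E_t=0\bigr)\to e^{-s}.
\]

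\textbf{Blocking.} Set $m_n=\lfloor s/(b_n\mu_n)\rfloor\to\infty$, and split $[1,m_n b_n]$ into $m_n$ blocks of length $b_n$. Inside each block, the final $h_n$ steps form a gap; the remaining $b_n-h_n$ steps form a main block. The leftover time at the end is at most $b_n$ steps, and by stationarity it carries entry probability at most $b_n\mu_n=o(1)$. The gaps together carry expected entry count at most $m_nh_n\mu_n\approx sh_n/b_n=o(1)$, by stationarity and a union bound. So it suffices to show that the probability that no main block contains an entry is $e^{-s}+o(1)$.

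\textbf{Approximate independence via mixing.} Let $G_j$ be the event that main block $j$ has no entry. We condition on the path through the end of main block $j-1$. Each main block begins $h_n$ steps after the previous one ends, so the Markov property gives an initial law $P^{h_n}(x,\cdot)$ for some $x$. Lemma~\ref{lem:path-tv-comparison} then gives
\[
\bigl|\Pbb(G_j\mid\mathcal F)-\Pbb_{\pi_n}(G_1)\bigr|\le\alpha_n(h_n).
\]
The main-block event depends on $b_n-h_n+1$ path coordinates, which is fine. Iterating, the probability that all $G_j$ hold is within $m_n\alpha_n(h_n)$ of $\Pbb_{\pi_n}(G_1)^{m_n}$. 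This error is $o(1)$ because $\alpha_n(h_n)=o(b_n\mu_n)$ and $m_n\approx s/(b_n\mu_n)$.

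\textbf{Identifying the block probability.} This is the key step, and it is where clumping enters through (M1). The full block has $\Pbb_{\pi_n}(N^{(b)}\ge1)=b_n\mu_n(1+o(1))$. Removing the last $h_n$ steps changes this by at most $h_n\mu_n=o(b_n\mu_n)$, so
\[
1-\Pbb_{\pi_n}(G_1)=b_n\mu_n(1+o(1)).
\]
Then
\[
\Pbb_{\pi_n}(G_1)^{m_n}=\exp\bigl(-m_nb_n\mu_n(1+o(1))\bigr)\to e^{-s},
\]
using $b_n\mu_n\to0$ so that $\log(1-x)\sim-x$.

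Combining the steps gives the claimed limit for every fixed $s>0$, and hence the exponential limit. The main obstacle is careful bookkeeping rather than a single difficult estimate. Specifically, one has to make the passage from hitting to entry (handling $X(0)\in A_n$ via $\pi_n(A_n)\to0$) and the burn-in shift fully rigorous. One must also check that the mixing errors sum to $m_n\alpha_n(h_n)=o(1)$, which is exactly what the hypothesis $\alpha_n(h_n)=o(b_n\mu_n)$ is tuned for.
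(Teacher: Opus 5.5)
Your proof is correct and follows the same architecture as the paper's proof. It uses mixing gaps of length $h_n$ separating the active blocks, (M1) for the block probability, a first-moment bound for entries in the gaps, and a total-variation burn-in argument through Lemma~\ref{lem:path-tv-comparison}. The differences are only bookkeeping. You carve each gap out of a length-$b_n$ block, deducing the main-block probability from (M1) via $h_n\mu_n=o(b_n\mu_n)$, so the blocks fit inside $[1,s/\mu_n]$ and the paper's $s\pm\varepsilon$ bracketing is not needed. You also absorb the $r_n$ time shift directly through the stationary bound $r_n\mu_n=o(1)$.
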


\begin{proof}
We first record the elementary identity that relates hitting to new entries.  For every integer $t\ge0$,
\[
        \{T_{A_n}>t\}
        =
        \{X_n(0)\notin A_n,\ E_1=\cdots=E_t=0\}.
\]
Thus, under stationarity, the only difference between ``no hit'' and ``no entry'' is the event $X_n(0)\in A_n$, whose probability is $\pi_n(A_n)=o(1)$.

We begin with the stationary initial law.  Fix $s>0$ and set
\[
        m_n:=\floor{\frac{s}{b_n\mu_n}}.
\]
Use $m_n$ active blocks of length $b_n$, separated by gaps of length $h_n$:
\[
        I_\ell:=\{(\ell-1)(b_n+h_n)+1,\ldots,(\ell-1)(b_n+h_n)+b_n\},
        \qquad 1\le \ell\le m_n.
\]
Let $B_\ell$ be the event that at least one new entry occurs in the active block $I_\ell$, and put
\[
        \beta_n:=\Pbb_{\pi_n}(B_1).
\]
By stationarity and assumption \textnormal{(M1)},
\[
        \beta_n=b_n\mu_n(1+o(1)).
\]

We use the following elementary consequence of the definition of $\alpha_n$.  If $\eta$ is any probability law on the state space, then
\[
        \|\eta P_n^{h_n}-\pi_n\|_{\TV}
        \le \sup_y\|P_n^{h_n}(y,\cdot)-\pi_n\|_{\TV}
        =\alpha_n(h_n).
\]
Thus, after any conditioning on the past up to the end of an active block, the law at the beginning of the next active block, after the separating gap of length $h_n$, is within $\alpha_n(h_n)$ of stationarity.  By Lemma~\ref{lem:path-tv-comparison}, the conditional probability of a new entry in the next active block differs from $\beta_n$ by at most $\alpha_n(h_n)$.

Let
\[
        p_{\ell,n}:=
        \Pbb_{\pi_n}\bigl(B_\ell\mid B_1^c\cap\cdots\cap B_{\ell-1}^c\bigr),
        \qquad 1\le \ell\le m_n,
\]
with the convention that $p_{1,n}=\beta_n$.  The preceding paragraph gives, uniformly in $\ell$,
\[
        p_{\ell,n}=\beta_n+\OO(\alpha_n(h_n)).
\]
Hence
\[
        \sum_{\ell=1}^{m_n}p_{\ell,n}
        =m_n\beta_n+\OO(m_n\alpha_n(h_n))=s+o(1),
        \qquad
        \sum_{\ell=1}^{m_n}p_{\ell,n}^2=o(1),
\]
because $m_n\alpha_n(h_n)=o(1)$ and $m_n\beta_n^2=\OO(b_n\mu_n)\to0$.  Therefore
\[
\begin{aligned}
        \Pbb_{\pi_n}(B_1^c\cap\cdots\cap B_{m_n}^c)
        &=\prod_{\ell=1}^{m_n}(1-p_{\ell,n}) \\
        &=\exp\left\{-\sum_{\ell=1}^{m_n}p_{\ell,n}
            +\OO\left(\sum_{\ell=1}^{m_n}p_{\ell,n}^2\right)\right\} \\
        &\longrightarrow e^{-s}.
\end{aligned}
\]

Entries occurring in the separating gaps are negligible.  The total number of gap times is at most $m_nh_n$, so stationarity gives expected gap entries at most
\[
        m_nh_n\mu_n
        \le \frac{s}{b_n\mu_n}h_n\mu_n+o(1)
        =s\frac{h_n}{b_n}+o(1)\to0.
\]
Thus, by Markov's inequality, the probability of any gap entry tends to zero.

The total length of the active-block-plus-gap construction is
\[
        m_n(b_n+h_n)=\frac{s}{\mu_n}(1+o(1)).
\]
Because $\pi_n(A_n)\to0$, the possibility of starting inside $A_n$ is negligible.  To pass from the block endpoint to the exact time $\floor{s/\mu_n}$, fix $\varepsilon\in(0,s)$.  Applying the preceding block argument with $s-\varepsilon$ and $s+\varepsilon$ gives block endpoints at
\[
        \frac{s-\varepsilon}{\mu_n}(1+o(1))
        \quad\text{and}\quad
        \frac{s+\varepsilon}{\mu_n}(1+o(1)).
\]
For all sufficiently large $n$, these endpoints bracket $\floor{s/\mu_n}$.  Monotonicity of $t\mapsto \Pbb_{\pi_n}(T_{A_n}>t)$ therefore gives
\[
        e^{-(s+\varepsilon)}+o(1)
        \le
        \Pbb_{\pi_n}\left(T_{A_n}>\floor{s/\mu_n}\right)
        \le
        e^{-(s-\varepsilon)}+o(1).
\]
Letting $\varepsilon\downarrow0$ yields
\[
        \Pbb_{\pi_n}\left(T_{A_n}>\floor{s/\mu_n}\right)\to e^{-s}.
\]

We now pass from stationarity to the deterministic initial state $x_n$.  Assumption (M2) gives a burn-in time $r_n$ for which the chain has not hit $A_n$ with probability $1-o(1)$, has mixed to $\pi_n$ in total variation, and satisfies $r_n\mu_n=o(1)$.  Let
\[
        H_n:=\{T_{A_n}>r_n\}.
\]
Let $\lambda_n$ be the law of $X_n(r_n)$ under $\Pbb_{x_n}$, and let $\lambda_n^H$ be the law of $X_n(r_n)$ conditioned on $H_n$.  Since $\Pbb_{x_n}(H_n^c)=o(1)$,
\[
        \|\lambda_n^H-\lambda_n\|_{\TV}
        \le \frac{\Pbb_{x_n}(H_n^c)}{\Pbb_{x_n}(H_n)}=o(1).
\]
Together with $\|\lambda_n-\pi_n\|_{\TV}=o(1)$, this shows that, on the no-hit event, the post-burn-in law is still $o(1)$-close to stationarity.  By Lemma~\ref{lem:path-tv-comparison}, starting the future path from this conditional law changes any post-burn-in path event by only $o(1)$ relative to a stationary start.  The time $r_n$ itself is negligible on the $\mu_n^{-1}$ scale.  Indeed, if
\[
        u_n:=\floor{s/\mu_n}-r_n,
\]
then $u_n\ge0$ for all sufficiently large $n$ and $\mu_nu_n\to s$.  The same bracketing argument used above gives the stationary survival asymptotic along any integer sequence with scaled length tending to $s$.  Applying that asymptotic to this post-burn-in horizon gives, for every fixed $s>0$,
\[
        \Pbb_{x_n}\left(T_{A_n}>\floor{s/\mu_n}\right)\to e^{-s}.
\]
This is equivalent to $\mu_nT_{A_n}\Rightarrow\Exp(1)$.
\end{proof}

\begin{remark}[A standard one-block verification]
In applications below, assumption \textnormal{(M1)} is usually verified by proving a first-moment identity and a negligible second factorial moment.  If
\[
        \Ebb_{\pi_n}N_n^{(b)}=b_n\mu_n
        \quad\text{and}\quad
        \Ebb_{\pi_n}\bigl[(N_n^{(b)})_2\bigr]=o(b_n\mu_n),
\]
where \((N)_2=N(N-1)\), then
\[
        0\le \Ebb_{\pi_n} N_n^{(b)}-\Pbb_{\pi_n}(N_n^{(b)}\ge1)
        \le \Ebb_{\pi_n}\bigl[(N_n^{(b)})_2\bigr].
\]
Hence \(\Pbb_{\pi_n}(N_n^{(b)}\ge1)=b_n\mu_n(1+o(1))\).  In this form the second factorial moment is precisely the one-block clump-control estimate: after one entry into the target set, additional entries in the same block must be negligible relative to the first entry rate.
\end{remark}

\begin{remark}[Interface with monotone terminal clouds]
In monotone coupon collectors, the central object is a terminal cloud $W_n(t)$, and completion is the event $W_n(t)=0$.  In the present theorem, the target set $A_n$ is not absorbing and visits occur in clumps.  The relevant intensity is therefore the stationary \emph{entry} rate $\mu_n$, rather than the stationary mass $\pi_n(A_n)$ alone.  When visits to $A_n$ last for an average clump length $\ell_n$, one typically has $\mu_n\asymp \pi_n(A_n)/\ell_n$.
\end{remark}

\section{The reset-button collector: exact regeneration}

The reset-button collector is not another stationary-entry application.  A reset
returns the process exactly to the empty state, so the path decomposes into
independent excursions.  Completion occurs when one excursion reaches the
all-present state before the next reset.  Thus the controlling quantity is a
successful-excursion probability, not a stationary-entry flux into completion.

\subsection{The reset-button model}

Fix $n\ge1$.  There are standard coupons $1,\ldots,n$ and a reset coupon $\otimes$.  At each discrete time one coupon is sampled.  The reset coupon appears with probability
\[
        \rho_n\in(0,1),
\]
and standard coupon $i$ appears with probability $p_{i,n}>0$, where
\[
        \sum_{i=1}^n p_{i,n}=q_n:=1-\rho_n.
\]
When $\otimes$ appears, the current collection is emptied, and the collector then continues from the empty collection.  Let $T_n$ be the first time at which all $n$ standard coupons are present.

It is useful to separate calendar time from standard-coupon time.  Conditional on a draw being standard, coupon $i$ has probability
\[
        \pi_{i,n}:=\frac{p_{i,n}}{q_n},
        \qquad 1\le i\le n.
\]
Let $C_n$ denote the ordinary coupon collector completion time with coupon probabilities $\pi_{1,n},\ldots,\pi_{n,n}$, measured in standard-coupon draws.  Its probability generating function is
\[
        \phi_n(z):=\Ebb z^{C_n}.
\]

\begin{remark}[Connection with Jockovi\'c--Todi\'c]
Jockovi\'c and Todi\'c analyze this model under the notation CCPRB\@.  Their Theorem~1 gives a finite distribution formula by decomposing a sample path into standard-coupon blocks separated by reset symbols.  Their Section~3.2 treats the continuing-after-reset case by a Markov chain on the number of distinct standard coupons currently held, and their Theorem~3 gives a beta-function formula for the expected waiting time when the standard coupons are equally likely \cite{JockovicTodic2024}.  They also discuss expectation asymptotics in several reset regimes.  The purpose of the present section is different: we make the exact regeneration explicit.  Each reset-free excursion is an ordinary coupon-collector attempt, and the number of failed excursions before completion is geometric with success probability
\[
        s_n=\Ebb q_n^{C_n},
\]
where $C_n$ is the ordinary coupon-collector completion time.  This representation recovers the beta-function expectation as a corollary and gives the full probability generating function, exponential rare-success limits, fixed-moment convergence, and positive exponential-moment convergence.
\end{remark}

\subsection{Exact regeneration}

\begin{lemma}[One-excursion structure]\label{lem:reset-one-excursion}
Let $K_n$ be the number of standard-coupon draws before the next reset.  Then
\[
        \Pbb(K_n=k)=\rho_n q_n^k,
        \qquad k=0,1,2,\ldots.
\]
A reset-free excursion succeeds precisely when
\[
        C_n\le K_n.
\]
Consequently the one-excursion success probability is
\[
        s_n:=\Pbb(C_n\le K_n)=\Ebb q_n^{C_n}=\phi_n(q_n).
\]
\end{lemma}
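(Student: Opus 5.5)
The plan is to prove the three claims in order, starting from the i.i.d.\ structure of the draws after a given time (the start, or a reset).

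\textbf{Step 1: law of $K_n$.} Each draw is independently the reset coupon with probability $\rho_n$ and a standard coupon with probability $q_n$. Hence the number $K_n$ of standard draws before the first reset is geometric on $\{0,1,2,\ldots\}$. Concretely, $\{K_n=k\}$ means the first $k$ draws are standard and the $(k+1)$st is $\otimes$, which has probability $q_n^k\rho_n$.

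\textbf{Step 2: characterization of success.} Condition on draws being standard. By the thinning property, the sequence of labels of the standard draws is i.i.d.\ with law $(\pi_{i,n})$, and it is independent of the reset/standard pattern. The pattern is an i.i.d.\ Bernoulli sequence, and the labels are attached independently. So within the excursion, the collection after $j$ standard draws equals the set of distinct labels among the first $j$ standard labels. All $n$ types are present before the reset exactly when the ordinary collector time $C_n$, computed from this label sequence, satisfies $C_n\le K_n$. Moreover $C_n$ is independent of $K_n$.

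The one point needing care is this independence, i.e.\ realizing $C_n$ and $K_n$ on independent sources. I would make it explicit by constructing the draws from two independent i.i.d.\ sequences: a Bernoulli$(q_n)$ indicator sequence and a label sequence with law $(\pi_{i,n})$. Assigning the $j$th standard slot the $j$th label reproduces the original draw distribution, since each draw is $i$ with probability $q_n\pi_{i,n}=p_{i,n}$, independently.

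\textbf{Step 3: the formula for $s_n$.} By independence,
\[
\Pbb(C_n\le K_n)=\sum_c \Pbb(C_n=c)\,\Pbb(K_n\ge c).
\]
From Step 1, $\Pbb(K_n\ge c)=\sum_{k\ge c}\rho_n q_n^k=q_n^c$. Therefore
\[
s_n=\Ebb q_n^{C_n}=\phi_n(q_n).
\]
Beyond the independence construction in Step 2, the argument is routine.
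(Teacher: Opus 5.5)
Your proposal is correct and follows essentially the same route as the paper: you derive the geometric law of $K_n$ from the first reset, use the independence of the standard-label sequence from the reset pattern so that $C_n$ is independent of $K_n$, and then average $\Pbb(K_n\ge c)=q_n^c$ over $C_n$. Your explicit two-sequence construction (i.i.d.\ Bernoulli$(q_n)$ indicators plus an independent i.i.d.\ label sequence with law $(\pi_{i,n})$) is a more careful version of the paper's remark that the reset indicators are independent of the conditional standard labels.
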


\begin{proof}
The event $\{K_n=k\}$ means that the first $k$ draws are standard and the next draw is a reset.  Hence its probability is $q_n^k\rho_n$.

Given that the first $k$ draws are standard, their conditional types are independent with probabilities $\pi_{1,n},\ldots,\pi_{n,n}$.  The reset indicators are independent of these conditional standard-coupon labels.  Thus the number of standard draws needed for completion during the excursion may be taken as an independent copy of $C_n$.  The excursion succeeds exactly when completion occurs before the reset, namely when $C_n\le K_n$.  Therefore
\[
        \Pbb(C_n\le K_n\mid C_n)=\Pbb(K_n\ge C_n\mid C_n)=q_n^{C_n}.
\]
Taking expectations gives $s_n=\Ebb q_n^{C_n}$.
\end{proof}

\begin{proposition}[Exact regenerative representation]\label{prop:reset-regen-representation}
Let $(C_{n,r},K_{n,r})_{r\ge1}$ be independent copies of $(C_n,K_n)$, with $C_{n,r}$ independent of $K_{n,r}$ for each $r$.  Let
\[
        G_n:=\inf\{r\ge1:C_{n,r}\le K_{n,r}\}.
\]
Then $G_n$ is geometric on $\{1,2,\ldots\}$ with success probability $s_n$, and
\[
        T_n\stackrel{d}=\sum_{r=1}^{G_n-1}(K_{n,r}+1)+C_{n,G_n},
\]
where the sum is empty if $G_n=1$.
\end{proposition}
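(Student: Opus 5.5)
The plan is to build the reset-button path out of i.i.d.\ excursion blocks, using the independence structure already set up in Lemma~\ref{lem:reset-one-excursion}.

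\textbf{Step 1: the driving sequence.} I would realize the process from an i.i.d.\ sequence of draws \((\xi_t)_{t\ge1}\). Each \(\xi_t\) equals \(\otimes\) with probability \(\rho_n\) and equals \(i\) with probability \(p_{i,n}\).

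\textbf{Step 2: cut into excursions.} Cut the draw sequence at the successive reset times \(R_1<R_2<\cdots\). Define the \(r\)-th excursion as the block of draws strictly after \(R_{r-1}\) (with \(R_0=0\)), up to and including \(R_r\). Let \(K_{n,r}=R_r-R_{r-1}-1\) be the number of standard draws in that block. Let \(C_{n,r}\) be the ordinary collector time computed from the standard-coupon labels of the \(r\)-th excursion.

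To make \(C_{n,r}\) defined even when the block is too short, I would use a standard device. Attach to excursion \(r\) an infinite i.i.d.\ label sequence with law \((\pi_{i,n})\). Its first \(K_{n,r}\) entries are the actual labels, and the rest are auxiliary and independent of everything else.

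\textbf{Step 3: independence.} By the strong Markov property of the i.i.d.\ draw sequence at the stopping times \(R_r\), the pairs \((C_{n,r},K_{n,r})_{r\ge1}\) are independent and identically distributed. Within an excursion, the reset indicators are Bernoulli\((\rho_n)\) and are independent of the conditional standard labels. This is the same decomposition used in the proof of Lemma~\ref{lem:reset-one-excursion}. Hence \(K_{n,r}\) is independent of the whole label sequence, so \(C_{n,r}\) is independent of \(K_{n,r}\) with the law of \(C_n\).

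\textbf{Step 4: identify \(T_n\).} Because each reset empties the collection, the collection during excursion \(r\) depends only on that excursion's labels. Completion therefore occurs during excursion \(r\) exactly when \(C_{n,r}\le K_{n,r}\). In that case it happens at calendar time \(R_{r-1}+C_{n,r}\).

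Consequently, with \(G_n\) the first successful excursion index,
\[
        T_n=R_{G_n-1}+C_{n,G_n}=\sum_{r=1}^{G_n-1}(K_{n,r}+1)+C_{n,G_n}.
\]
This holds pathwise, not merely in distribution, which gives the stated identity. The events \(\{C_{n,r}\le K_{n,r}\}\) are i.i.d.\ with probability \(s_n\) by Lemma~\ref{lem:reset-one-excursion}. Since \(s_n>0\) (because \(q_n>0\) and \(C_n<\infty\) a.s.), \(G_n\) is a.s.\ finite and geometric with parameter \(s_n\).

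\textbf{Main obstacle.} The only delicate point is the bookkeeping of the auxiliary labels in Step 2. The representation must treat \(C_{n,r}\) as a genuine copy of \(C_n\) independent of \(K_{n,r}\), even though on failed excursions it is not fully observed.

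The padding construction handles this. On the event \(C_{n,r}\le K_{n,r}\), \(C_{n,r}\) depends only on observed labels. On the complement, only the indicator of failure matters for \(T_n\). Therefore the padding does not change the law of \(T_n\).
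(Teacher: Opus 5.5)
Your proof is correct and follows the same route as the paper: you decompose the path at each reset into i.i.d.\ excursions, each succeeding with probability $s_n$ by Lemma~\ref{lem:reset-one-excursion}, so the first success index is geometric and $T_n$ is the sum of the failed lengths $K_{n,r}+1$ plus the final $C_{n,G_n}$. Your padding construction makes $C_{n,r}$ a genuine copy of $C_n$ independent of $K_{n,r}$ and makes the identity pathwise; this tightens the paper's more informal ``stopped iid construction'' but is not a different argument.
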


\begin{proof}
Each reset returns the process to the empty collection, so the post-reset future is independent of the past and has the same law as the original process.  Therefore successive reset-free excursions are independent.  By Lemma~\ref{lem:reset-one-excursion}, each excursion succeeds with probability $s_n$.  Hence the index of the first successful excursion is geometric with parameter $s_n$.

The stopped iid construction above automatically restricts the excursions with $r<G_n$ to be failures and the excursion with $r=G_n$ to be the first success.  If an excursion fails, it consists of $K_{n,r}$ standard draws followed by one reset draw, and therefore has calendar length $K_{n,r}+1$.  In the first successful excursion, the process stops at the ordinary completion time $C_{n,G_n}$ before the next reset is observed.  Summing the failed-excursion lengths and the final successful length gives the displayed representation.
\end{proof}

\subsection{Exact probability generating function and expectation}

\begin{theorem}[Exact probability generating function]\label{thm:reset-pgf}
For $|z|\le1$,
\[
        \Ebb z^{T_n}
        =\frac{(1-q_nz)\phi_n(q_nz)}{1-z+\rho_nz\phi_n(q_nz)}.
\]
The same identity also holds for any real $z\ge0$ such that $q_nz<1$ and
\[
        1-z+\rho_nz\phi_n(q_nz)>0.
\]
In that case the displayed value is finite.
\end{theorem}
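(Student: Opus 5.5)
The plan is to compute $\Ebb z^{T_n}$ directly from the regenerative representation of Proposition~\ref{prop:reset-regen-representation}. I would not sum over $G_n$ in closed form. Instead, I would condition on the first excursion and derive a renewal equation. If the first excursion succeeds, then $T_n=C_{n,1}$. If it fails, then $T_n=K_{n,1}+1+T_n'$, where $T_n'$ is an independent copy of $T_n$. Writing $F(z):=\Ebb z^{T_n}$, this gives
\[
        F(z)=\Ebb\bigl[z^{C_n}\1\{C_n\le K_n\}\bigr]
        +\Ebb\bigl[z^{K_n+1}\1\{C_n>K_n\}\bigr]F(z),
\]
with $C_n$ independent of $K_n$. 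I would then evaluate the two coefficients using Lemma~\ref{lem:reset-one-excursion}.

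For the success term, condition on $C_n=c$. Then $\Pbb(K_n\ge c)=q_n^c$, so the term equals $\Ebb (q_nz)^{C_n}=\phi_n(q_nz)$. For the failure term, condition on $C_n=c$ and sum over $k<c$:
\[
        \sum_{k=0}^{c-1}\rho_n q_n^k z^{k+1}
        =\rho_n z\,\frac{1-(q_nz)^c}{1-q_nz}.
\]
Taking expectations over $C_n$ gives $\rho_n z\,(1-\phi_n(q_nz))/(1-q_nz)$. Solving the linear equation for $F$ gives
\[
        F(z)=\frac{\phi_n(q_nz)(1-q_nz)}{1-q_nz-\rho_nz+\rho_nz\phi_n(q_nz)}.
\]
Since $q_n+\rho_n=1$, the denominator simplifies to $1-z+\rho_nz\phi_n(q_nz)$, which is the claimed formula.

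The main obstacle is justifying the step of solving for $F$: we need $F(z)$ finite and the failure coefficient different from $1$. For $|z|\le1$, $F(z)$ is finite because $T_n<\infty$ a.s. ($G_n$ is geometric, since $s_n=\phi_n(q_n)>0$). The failure coefficient has modulus at most $\Pbb(C_n>K_n)=1-s_n<1$, so the division is legitimate. To avoid circularity for real $z\ge0$ with $q_nz<1$, I would instead expand $F$ via the representation as a geometric series in $G_n$, using independence across excursions:
\[
        F(z)=\sum_{g\ge1}a(z)^{g-1}\,\phi_n(q_nz),
        \qquad
        a(z):=\rho_nz\,\frac{1-\phi_n(q_nz)}{1-q_nz}.
\]
All terms are nonnegative, so Tonelli applies. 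The series converges exactly when $a(z)<1$. This condition is equivalent to $1-q_nz-\rho_nz+\rho_nz\phi_n(q_nz)>0$, i.e.\ to the stated positivity condition, because $1-q_nz>0$. In that case $F(z)$ equals the displayed finite value. The same geometric-series argument also covers $|z|\le1$ by dominated convergence.
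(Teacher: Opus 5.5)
Your proposal is correct and is essentially the paper's proof: the same first-excursion renewal equation, with success term $\phi_n(q_nz)$ and failure coefficient $\rho_nz(1-\phi_n(q_nz))/(1-q_nz)$, solved for $F$, with the real-$z$ case justified by the nonnegative geometric series. You compute the failure coefficient by conditioning on $C_n=c$ and summing $k<c$, whereas the paper sums $(q_nz)^k\Pbb(C_n>k)$ over $k$, which is the same computation by Fubini; your bound $|a(z)|\le 1-s_n<1$ for $|z|\le1$ and your Tonelli expansion, which avoids assuming $F(z)<\infty$ in advance, are slightly more explicit than the paper's brief ``absolute convergence'' remark.
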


\begin{proof}
Let $F_n(z):=\Ebb z^{T_n}$.  In the first excursion, success contributes
\[
        \Ebb\left[z^{C_n}\1_{\{C_n\le K_n\}}\right]
        =\Ebb z^{C_n}q_n^{C_n}
        =\phi_n(q_nz).
\]
Failure after exactly $k$ standard draws contributes probability $\rho_nq_n^k\Pbb(C_n>k)$, calendar factor $z^{k+1}$, and then an independent fresh copy of the full process.  Therefore
\[
        F_n(z)
        =\phi_n(q_nz)
         +\rho_n z F_n(z)\sum_{k\ge0}(q_nz)^k\Pbb(C_n>k).
\]
For an integer-valued $C_n\ge1$,
\[
        \sum_{k\ge0}x^k\Pbb(C_n>k)
        =\Ebb\sum_{k=0}^{C_n-1}x^k
        =\frac{1-\phi_n(x)}{1-x}.
\]
Substituting $x=q_nz$ gives
\[
        F_n(z)=\phi_n(q_nz)
        +\rho_nzF_n(z)\frac{1-\phi_n(q_nz)}{1-q_nz}.
\]
For $|z|\le1$ this identity is valid by absolute convergence.  If $z\ge0$, $q_nz<1$, and
\[
        \rho_nz\frac{1-\phi_n(q_nz)}{1-q_nz}<1,
\]
then the same first-excursion expansion is a convergent geometric series.  This inequality is equivalent to
\[
        1-q_nz-\rho_nz+\rho_nz\phi_n(q_nz)>0.
\]
Solving for $F_n(z)$ yields
\[
        F_n(z)
        =\frac{(1-q_nz)\phi_n(q_nz)}
        {1-q_nz-\rho_nz+\rho_nz\phi_n(q_nz)}.
\]
Since $q_n+\rho_n=1$, the denominator is $1-z+\rho_nz\phi_n(q_nz)$.
\end{proof}

\begin{corollary}[Exact expectation]\label{cor:reset-exact-mean}
Let
\[
        s_n=\phi_n(q_n)=\Ebb q_n^{C_n}.
\]
Then
\[
        \Ebb T_n=\frac{1-s_n}{\rho_ns_n}.
\]
\end{corollary}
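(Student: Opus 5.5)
The cleanest route is to work directly from the regenerative representation of Proposition~\ref{prop:reset-regen-representation} and take expectations term by term. I will avoid differentiating the probability generating function of Theorem~\ref{thm:reset-pgf}, since that requires more care at $z=1$.

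\textbf{Setting up the decomposition.} Write
\[
T_n=\sum_{r=1}^{G_n-1}(K_{n,r}+1)+C_{n,G_n}.
\]
The first step is Wald's identity applied to the failed excursions. The failed lengths are iid with the conditional law of $K_n+1$ given $C_n>K_n$, and the number of them, $G_n-1$, has mean $(1-s_n)/s_n$. This gives
\[
\Ebb T_n=\frac{1-s_n}{s_n}\,\Ebb[K_n+1\mid C_n>K_n]+\Ebb[C_n\mid C_n\le K_n].
\]

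\textbf{Computing the two conditional means.} Both are computed from Lemma~\ref{lem:reset-one-excursion} and the tail-sum identity already used in the proof of Theorem~\ref{thm:reset-pgf}.

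\emph{Successful excursion.} We have
\[
\Ebb[C_n\1\{C_n\le K_n\}]=\Ebb[C_nq_n^{C_n}]=q_n\phi_n'(q_n).
\]

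\emph{Failed excursion.} We have
\[
\Ebb[(K_n+1)\1\{C_n>K_n\}]=\sum_{k\ge0}(k+1)\rho_nq_n^k\Pbb(C_n>k).
\]

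Rather than evaluating these separately, I would combine them via a simpler identity. The calendar length of an excursion truncated at completion is $\min(K_n+1,C_n)$ in the appropriate sense. The overall expectation then equals
\[
\frac{\Ebb[\text{excursion length}]}{s_n},
\]
where the excursion length is $K_n+1$ on failure and $C_n$ on success. This is Wald for the stopped sum including the final excursion.

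\textbf{The key computation.} It remains to show
\[
\Ebb[\text{length}]=\sum_{k\ge0}\Pbb(\text{length}>k).
\]
The length exceeds $k$ exactly when there have been no reset among the first $k$ draws and $C_n>k$. Hence the sum is
\[
\sum_{k\ge0}q_n^k\Pbb(C_n>k)=\frac{1-\phi_n(q_n)}{1-q_n}=\frac{1-s_n}{\rho_n}.
\]
Dividing by $s_n$ gives the claim. This step is the main obstacle, since one has to check carefully that ``length $>k$'' really is ``first $k$ draws standard and not yet complete'' under the convention that the reset draw itself is counted. A successful excursion ends at $C_n$ draws with all of them standard, and a failed one ends at draw $K_n+1$, so the identity holds.

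As a cross-check, I would verify the result against the probability generating function, by expanding the formula in Theorem~\ref{thm:reset-pgf} to first order at $z=1$.
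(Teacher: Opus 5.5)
Your proof is correct, but it takes a different route from the paper. The paper differentiates the closed-form PGF of Theorem~\ref{thm:reset-pgf} at $z=1$. With $N_n(z)=(1-q_nz)\phi_n(q_nz)$ and $D_n(z)=1-z+\rho_nz\phi_n(q_nz)$ one has $N_n(1)=D_n(1)=\rho_ns_n$. The quotient rule therefore collapses to $(N_n'(1)-D_n'(1))/(\rho_ns_n)$, the $\phi_n'(q_n)$ terms cancel, and $1-s_n$ remains.

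You instead apply Wald's identity to the stopped sum of iid excursion lengths. Each length is exactly $L_r=\min(K_{n,r}+1,C_{n,r})$, so no ``in the appropriate sense'' is needed. This gives $\Ebb T_n=\Ebb L/s_n$, and you evaluate $\Ebb L$ by the tail sum $\Pbb(L>k)=\Pbb(K_n\ge k)\Pbb(C_n>k)=q_n^k\Pbb(C_n>k)$. This is essentially the ``direct cycle proof'' the paper mentions only in the remark after the corollary.

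What your route buys:
\begin{itemize}
\item It is more elementary and needs no a priori finiteness. The summands are nonnegative and $\{G_n\ge r\}$ depends only on the first $r-1$ excursions, so Tonelli gives $\Ebb T_n=\Ebb L\cdot\Ebb G_n$ even if one did not know $\Ebb T_n<\infty$.
\item It exhibits the formula as mean cycle length divided by success probability, which is the same structure used later in the rare-success theorem.
\end{itemize}

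What the paper's route buys:
\begin{itemize}
\item It reuses the PGF already established and extends directly to higher derivatives and factorial moments.
\item The care you worry about at $z=1$ is available there. Since $D_n(1)=\rho_ns_n>0$ and $q_n<1$, the extended form of Theorem~\ref{thm:reset-pgf} gives a finite value for real $z$ slightly above $1$. Hence $\Ebb z^{T_n}$ has radius of convergence exceeding $1$ and may be differentiated at $z=1$.
\end{itemize}

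Your first display, Wald on the failed excursions with conditional means, is correct but superfluous once you pass to the full stopped sum, so you can drop it.
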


\begin{proof}
Differentiate the probability generating function in Theorem~\ref{thm:reset-pgf} at $z=1$.  Write $s_n=\phi_n(q_n)$.  The numerator and denominator are
\[
        N_n(z)=(1-q_nz)\phi_n(q_nz),
        \qquad
        D_n(z)=1-z+\rho_nz\phi_n(q_nz).
\]
At $z=1$,
\[
        N_n(1)=D_n(1)=\rho_ns_n.
\]
Also
\[
        N_n'(1)=-q_ns_n+\rho_nq_n\phi_n'(q_n),
\]
and
\[
        D_n'(1)=-1+\rho_ns_n+\rho_nq_n\phi_n'(q_n).
\]
Thus
\[
        \Ebb T_n=F_n'(1)
        =\frac{N_n'(1)-D_n'(1)}{\rho_ns_n}
        =\frac{1-s_n}{\rho_ns_n}.
\]
\end{proof}

\begin{remark}[A direct cycle proof]
The same formula also follows from first-step regeneration.  The probability that one excursion succeeds is $s_n$.  The exact PGF proof is more useful because it also gives limiting distributions and higher moments.
\end{remark}

\subsection{Equal standard probabilities and the beta-function formula}

Assume now that all standard coupons have the same probability,
\[
        p_{i,n}=\frac{q_n}{n},
        \qquad 1\le i\le n.
\]
Then the conditional standard probabilities are uniform.

\begin{lemma}[Laplace transform of the ordinary uniform collector]\label{lem:reset-uniform-laplace}
Let $C_n$ be the ordinary uniform coupon collector time measured in standard-coupon draws.  Then, for $0\le x<1$,
\[
        \phi_n(x)=\prod_{j=1}^n\frac{(j/n)x}{1-(1-j/n)x}.
\]
Consequently, with $q_n=1-\rho_n$,
\[
        s_n=\phi_n(q_n)
        =\prod_{j=1}^n\frac{j}{j+n\rho_n/q_n}
        =\frac{\Gamma(n+1)\Gamma(1+n\rho_n/q_n)}
        {\Gamma(n+1+n\rho_n/q_n)}.
\]
\end{lemma}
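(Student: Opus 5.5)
The plan is to use the standard stage decomposition of the uniform collector.  Write
\[
        C_n=\sum_{j=1}^n G_j ,
\]
where $G_j$ is the number of standard draws needed to go from $n-j$ distinct coupons to $n-j+1$ distinct coupons.  Once $n-j$ types are held, each draw is new with probability $j/n$, independently of the past.  So the $G_j$ are independent, and $G_j$ is geometric on $\{1,2,\ldots\}$ with success probability $j/n$.  Independence comes from the strong Markov property of the i.i.d.\ draw sequence at the successive times a new type first appears.

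For a geometric variable $G$ on $\{1,2,\ldots\}$ with success probability $a$, and for $0\le x<1$,
\[
        \Ebb x^{G}=\sum_{k\ge1}a(1-a)^{k-1}x^k=\frac{ax}{1-(1-a)x}.
\]
The series converges because $(1-a)x<1$.  Multiplying these factors over $j=1,\ldots,n$, with $a=j/n$, gives the product formula for $\phi_n(x)$.  The factor $j=n$ is simply $x$, which matches $G_n\equiv1$ for the first draw.

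To evaluate $s_n=\phi_n(q_n)$, simplify each factor at $x=q_n$.  Multiplying numerator and denominator by $n$ gives
\[
        \frac{j q_n}{n-(n-j)q_n}.
\]
Since $n-nq_n=n\rho_n$, the denominator equals $n\rho_n+jq_n$.  Dividing numerator and denominator by $q_n$ gives $j/(j+n\rho_n/q_n)$.

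Finally, set $c=n\rho_n/q_n>0$.  The functional equation $\Gamma(z+1)=z\Gamma(z)$ gives
\[
        \prod_{j=1}^n (j+c)=\frac{\Gamma(n+1+c)}{\Gamma(1+c)}
        \qquad\text{and}\qquad
        \prod_{j=1}^n j=\Gamma(n+1).
\]
Taking the ratio yields the stated Gamma expression.

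None of these steps is really an obstacle.  The only points needing care are the independence of the stage times $G_j$ and the convergence condition $x<1$, and both are routine.
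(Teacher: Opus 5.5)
Your proposal is correct and follows essentially the same route as the paper: you decompose $C_n$ into independent geometric stage times with success probabilities $j/n$, multiply their generating functions, simplify each factor at $x=q_n$ using $n-nq_n=n\rho_n$, and telescope the product with $\Gamma(z+1)=z\Gamma(z)$. The one detail worth stating explicitly is that $0<\rho_n<1$ gives both $q_n<1$, so the product formula applies at $x=q_n$, and $n\rho_n/q_n>0$.
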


\begin{proof}
In the ordinary uniform coupon collector, after $j-1$ distinct coupons have been collected, the probability that the next standard draw gives a new coupon is $(n-j+1)/n$.  Hence
\[
        C_n\stackrel d=\sum_{k=1}^n Y_{n,k},
\]
where the $Y_{n,k}$ are independent geometric random variables on $\{1,2,\ldots\}$ with success probabilities $k/n$, for $k=1,\ldots,n$.

For such a geometric variable,
\[
        \Ebb x^{Y_{n,k}}
        =\frac{(k/n)x}{1-(1-k/n)x}.
\]
Multiplying over $k=1,\ldots,n$ gives the product formula.  At $x=q_n$,
\[
        \frac{(j/n)q_n}{1-(1-j/n)q_n}
        =\frac{jq_n}{n\rho_n+jq_n}
        =\frac{j}{j+n\rho_n/q_n}.
\]
The gamma-function identity follows by writing the product as
\[
        \prod_{j=1}^n\frac{j}{j+a_n}
        =\frac{\Gamma(n+1)\Gamma(1+a_n)}{\Gamma(n+1+a_n)},
        \qquad a_n:=\frac{n\rho_n}{q_n}.
\]
\end{proof}

\begin{corollary}[Recovery of the Jockovi\'c--Todi\'c expectation]\label{cor:reset-JT-mean}
In the equal-probability reset-button collector,
\[
        \Ebb T_n
        =\frac1{\rho_n}\left(
        \frac{1}{n\rho_n B\!\left(n,\frac{n\rho_n}{1-\rho_n}\right)}-1
        \right).
\]
\end{corollary}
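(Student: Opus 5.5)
The plan is to combine Corollary~\ref{cor:reset-exact-mean}, which gives \(\Ebb T_n=(1-s_n)/(\rho_n s_n)\), with the explicit value of \(s_n\) from Lemma~\ref{lem:reset-uniform-laplace}, and then convert the gamma-function ratio into a beta function. Writing \(a_n:=n\rho_n/q_n = n\rho_n/(1-\rho_n)\), the corollary rewrites as
\[
        \Ebb T_n=\frac1{\rho_n}\left(\frac1{s_n}-1\right),
\]
so the whole task reduces to the identity \(1/s_n = 1/\bigl(n\rho_n B(n,a_n)\bigr)\), that is,
\[
        s_n = n\rho_n\, B(n,a_n).
\]

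To verify this, start from \(s_n=\Gamma(n+1)\Gamma(1+a_n)/\Gamma(n+1+a_n)\), as given in Lemma~\ref{lem:reset-uniform-laplace}. Use \(\Gamma(n+1)=n\Gamma(n)\), \(\Gamma(1+a_n)=a_n\Gamma(a_n)\) and \(\Gamma(n+1+a_n)=(n+a_n)\Gamma(n+a_n)\). Together these give
\[
        s_n=\frac{n a_n}{n+a_n}\,B(n,a_n).
\]
It remains to simplify the prefactor. Since \(n+a_n = n\bigl(1+\rho_n/q_n\bigr)=n/q_n\), we get \(na_n/(n+a_n)= a_n q_n = n\rho_n\). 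This yields \(s_n=n\rho_n B(n,a_n)\), and substituting into the rewritten expectation formula gives the statement.

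There is no real obstacle here. The only points needing care are the following. First, \(a_n>0\) because \(\rho_n\in(0,1)\), so \(B(n,a_n)\) and the gamma recursions are well defined. Second, the prefactor simplification \(n+a_n=n/q_n\) must be checked, since that is where the identity \(q_n=1-\rho_n\) enters.
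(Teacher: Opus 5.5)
Your proposal is correct and follows the paper's proof essentially step for step. You apply the gamma recursions together with $n+a_n=n/q_n$ to get $s_n=n\rho_n B(n,a_n)$, and then substitute this into $\Ebb T_n=(1-s_n)/(\rho_n s_n)$ from Corollary~\ref{cor:reset-exact-mean}.
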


\begin{proof}
Put
\[
        a_n:=\frac{n\rho_n}{q_n}.
\]
By Lemma~\ref{lem:reset-uniform-laplace},
\[
        s_n=\frac{\Gamma(n+1)\Gamma(1+a_n)}{\Gamma(n+1+a_n)}.
\]
Since $\Gamma(n+1)=n\Gamma(n)$ and $\Gamma(1+a_n)=a_n\Gamma(a_n)$,
\[
        s_n
        =na_n\frac{\Gamma(n)\Gamma(a_n)}{\Gamma(n+a_n+1)}.
\]
But $n+a_n=n/q_n$, so
\[
        \Gamma(n+a_n+1)=(n+a_n)\Gamma(n+a_n)=\frac{n}{q_n}\Gamma(n+a_n).
\]
Therefore
\[
        s_n
        =a_nq_n\frac{\Gamma(n)\Gamma(a_n)}{\Gamma(n+a_n)}
        =n\rho_n B(n,a_n).
\]
Substituting this into Corollary~\ref{cor:reset-exact-mean} gives
\[
        \Ebb T_n
        =\frac{1-n\rho_nB(n,a_n)}{\rho_n n\rho_nB(n,a_n)}
        =\frac1{\rho_n}\left(\frac{1}{n\rho_nB(n,a_n)}-1\right),
\]
which is the displayed formula.
\end{proof}

\begin{remark}[What is new relative to Jockovi\'c--Todi\'c]
Corollary~\ref{cor:reset-JT-mean} recovers the beta-function expectation formula of Jockovi\'c and Todi\'c.  The new contribution is not the expectation itself, nor the reset-button model or its basic finite block distribution.  The contribution is the structural compression of those blocks into exact regeneration, the resulting one-line PGF, and the distributional consequences below: exponential rare-success limits, fixed-moment convergence, and positive exponential-moment convergence.
\end{remark}

\subsection{Regenerative rare-success limit}

The next theorem is the regenerative analogue of the stationary-entry hitting theorem proved in Section~\ref{sec:stationary-entry-module}.  It is strictly easier because regeneration at the empty state is exact.  The theorem is stated in a form that separates ordinary moment convergence from the stronger positive-exponential-moment conclusion.

\begin{theorem}[Exact-regeneration rare-success theorem]\label{thm:reset-rare-success}
For each $n$, let $0<\rho_n<1$, $q_n=1-\rho_n$, and let $C_n$ be any positive integer-valued ordinary coupon collector time with probability generating function $\phi_n$.  Define
\[
        s_n:=\phi_n(q_n)=\Ebb q_n^{C_n}.
\]
Assume
\begin{equation}\label{eq:reset-rare-success-assumptions}
        s_n\to0,
        \qquad
        \rho_n q_n\phi_n'(q_n)\to0.
\end{equation}
Then
\[
        \rho_ns_nT_n\Rightarrow \Exp(1),
\]
and, for every fixed integer $r\ge1$,
\[
        \Ebb(\rho_ns_nT_n)^r\longrightarrow r!.
\]
\end{theorem}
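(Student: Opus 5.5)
Since everything is explicit, I would work directly with the probability generating function of Theorem~\ref{thm:reset-pgf}, evaluated at $z=e^{-\lambda\rho_ns_n}$ for fixed $\lambda\ge0$. This gives the Laplace transform of $\rho_ns_nT_n$. For the moments I would bound an exponential moment uniformly in $n$, or alternatively use the representation of Proposition~\ref{prop:reset-regen-representation}.

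\textbf{Step 1: distributional limit.} Put $\varepsilon_n:=\rho_ns_n$ and $z_n=e^{-\lambda\varepsilon_n}$, so that $1-z_n=\lambda\varepsilon_n(1+o(1))$. Write $x_n=q_nz_n$. By the mean value theorem, and since $\phi_n'$ is increasing on $[0,1)$,
\[
        0\le s_n-\phi_n(x_n)\le q_n(1-z_n)\phi_n'(q_n).
\]
Hence $\phi_n(x_n)=s_n+\OO(\lambda\varepsilon_n q_n\phi_n'(q_n))$. The second hypothesis says $\rho_nq_n\phi_n'(q_n)\to0$, so the error term is $o(s_n)$, using $\varepsilon_n=\rho_ns_n$. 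It follows that the denominator is
\[
        1-z_n+\rho_nz_n\phi_n(x_n)=\varepsilon_n(\lambda+1+o(1)).
\]
For the numerator, $1-q_nz_n=\rho_n+q_n(1-z_n)=\rho_n(1+\OO(s_n))$, and $\phi_n(x_n)=s_n(1+o(1))$. So the numerator equals $\varepsilon_n(1+o(1))$. Therefore $\Ebb e^{-\lambda\rho_ns_nT_n}\to 1/(1+\lambda)$, which is the Laplace transform of $\Exp(1)$. The first hypothesis $s_n\to0$ enters through $\varepsilon_n\to0$ and the bound $\OO(s_n)$.

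\textbf{Step 2: moments.} Weak convergence together with uniform integrability of every power gives $\Ebb(\rho_ns_nT_n)^r\to r!$. For uniform integrability it suffices to show $\sup_n\Ebb e^{\theta\rho_ns_nT_n}<\infty$ for some fixed $\theta\in(0,1)$. I would repeat the Step~1 computation at $z_n=e^{\theta\varepsilon_n}>1$, using the real-$z$ clause of Theorem~\ref{thm:reset-pgf}.

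\textbf{Main obstacle.} The difficulty in Step~2 is checking the admissibility conditions $q_nz_n<1$ and $1-z_n+\rho_nz_n\phi_n(q_nz_n)>0$. The first is easy: $q_nz_n=q_n+\OO(\rho_ns_n)$ stays below $1$ for large $n$. The second requires control of $\phi_n$ slightly to the right of $q_n$. On $[q_n,q_nz_n]$ I would use convexity of $\phi_n$ and its monotone derivative, which should give $\phi_n(q_nz_n)=s_n(1+o(1))$. However, this needs $\phi_n'$ at the slightly larger point to remain comparable to $\phi_n'(q_n)$.

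The simplest route I would try first avoids this. By the mean value theorem with $\phi_n'$ increasing, the derivative at the larger argument is bounded by $(\phi_n(y)-\phi_n(q_n))/(y-q_n)$ for some $y$ slightly further right. Convexity then keeps the error of order $\theta\varepsilon_nq_n\phi_n'(\cdot)$, and this can be absorbed once $\theta$ is small. The denominator is then $\varepsilon_n(1-\theta+o(1))>0$ and the transform tends to $1/(1-\theta)$, which gives the required bound.

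If controlling $\phi_n$ to the right of $q_n$ proves delicate, I would fall back on the regenerative sum. It consists of $G_n-1$ failed excursions of i.i.d. lengths $K+1$ conditioned on $C>K$, plus one successful length. Its factorial moments can be bounded via Wald-type identities, with $\Ebb[C_n q_n^{C_n}]=q_n\phi_n'(q_n)$ controlling the final excursion.
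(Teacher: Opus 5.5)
Your Step 1 is correct and complete. At $z_n=e^{-\lambda\rho_ns_n}$ the mean-value bound $0\le s_n-\phi_n(q_nz_n)\le\lambda s_n\cdot\rho_nq_n\phi_n'(q_n)=o(s_n)$ is exactly where the second hypothesis enters. The closed form of Theorem~\ref{thm:reset-pgf} then gives the transform limit $(1+\lambda)^{-1}$.

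Step 2, however, is not a proof as written. You claim $\phi_n(q_nz_n)=s_n(1+o(1))$ at $z_n=e^{\theta\rho_ns_n}>1$, and hence $\Ebb e^{\theta\rho_ns_nT_n}\to(1-\theta)^{-1}$. That is the conclusion of Corollary~\ref{cor:reset-positive-exponential-moments}, which the paper proves only under an extra right-tilt hypothesis. Your convexity argument for it is circular. The hypotheses control $\phi_n'$ only at $q_n$, while convexity bounds $\phi_n'(q_nz_n)$ only by difference quotients reaching still further right. So your ``error of order $\theta\varepsilon_nq_n\phi_n'(\cdot)$'' is evaluated at an uncontrolled point.

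You have also placed the difficulty in the wrong spot. Admissibility needs only the free lower bound $\phi_n(q_nz_n)\ge s_n$. With $\varepsilon_n=\rho_ns_n$, it gives $1-z_n+\rho_nz_n\phi_n(q_nz_n)\ge 1-e^{\theta\varepsilon_n}(1-\varepsilon_n)\ge 1-e^{-(1-\theta)\varepsilon_n}>0$, and $q_nz_n\le e^{-\rho_n(1-\theta s_n)}<1$. For the value, note that when $z>1$ the map $\Phi\mapsto(1-q_nz)\Phi/(1-z+\rho_nz\Phi)$ is decreasing wherever its denominator is positive. Hence, for every $n$,
\[
\Ebb e^{\theta\varepsilon_nT_n}\le\frac{(1-q_nz_n)s_n}{1-z_n+\rho_nz_ns_n}\le\frac{\rho_ns_n}{1-e^{-(1-\theta)\varepsilon_n}}\le\frac{2}{1-\theta}.
\]
That is all uniform integrability requires; nothing about $\phi_n$ to the right of $q_n$ is needed. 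Your expected limit is in fact true, but by truncation rather than convexity: $\phi_n(q_ne^u)\le e^{uK}s_n+(q_ne^u)^K$. With $u=\theta\rho_ns_n$ and $K=\lceil 2\rho_n^{-1}\log(1/s_n)\rceil$, the right side is $s_n(1+o(1))$ using only $s_n\to0$.

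With that repair, your argument is genuinely different from the paper's. The paper uses the conditional regenerative decomposition: a geometric number $M_n$ of failed-excursion lengths plus one successful length. It gets the Laplace limit from $\Ebb[\rho_nL_n^{\mathrm{fail}}]\to1$ and bounded second moments. It gets the moments by expanding $r$-th powers of the geometric sum over distinct indices, using $\Ebb[(M_n)_r]=r!((1-s_n)/s_n)^r$. It then shows the successful excursion is negligible in every $L^r$; the second hypothesis is used both for the failed-length mean and for this negligibility. That decomposition shows explicitly where the mass of $T_n$ lies. Your closed-form route is shorter and, once fixed, yields more: a uniform exponential tail bound for $\rho_ns_nT_n$, and (with the truncation remark) the positive-exponential-moment limit without a separate right-tilt assumption. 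Your Wald-type fallback is essentially the paper's route, but as sketched it is too vague to stand on its own.
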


\begin{proof}
Let
\[
        \mathcal S_n:=\{C_n\le K_n\}
\]
be the event that a single reset-free excursion succeeds.  Thus $\Pbb(\mathcal S_n)=s_n$.

Use the regenerative representation in Proposition~\ref{prop:reset-regen-representation}, but write it in conditional excursion form.  Let $M_n$ be geometric on $\{0,1,2,\ldots\}$ with
\[
        \Pbb(M_n=m)=s_n(1-s_n)^m,
\]
let $L_{n,1}^{\mathrm{fail}},L_{n,2}^{\mathrm{fail}},\ldots$ be independent copies of
\[
        K_n+1\,\big|\,\mathcal S_n^c,
\]
and let $L_n^{\mathrm{suc}}$ have the law of
\[
        C_n\,\big|\,\mathcal S_n.
\]
Then $M_n$, the failed-excursion lengths, and the final successful-excursion length may be taken independent, and
\begin{equation}\label{eq:reset-conditional-regenerative-decomposition}
        T_n\stackrel{d}=\sum_{i=1}^{M_n}L_{n,i}^{\mathrm{fail}}+L_n^{\mathrm{suc}}.
\end{equation}

Put
\[
        Y_n:=\rho_n L_n^{\mathrm{fail}}.
\]
We first record the two estimates on $Y_n$ needed below.  Since $L_n^{\mathrm{fail}}=K_n+1$ under the failure event and $\Pbb(\mathcal S_n^c)=1-s_n\to1$, for every fixed integer $r\ge1$,
\[
        \Ebb Y_n^r
        \le \frac{\Ebb\bigl[(\rho_n(K_n+1))^r\bigr]}{1-s_n}=O_r(1),
\]
because $K_n$ is geometric with parameter $\rho_n$.  Moreover,
\[
        \Ebb Y_n
        =\frac{\rho_n\Ebb[(K_n+1)\1_{\mathcal S_n^c}]}{1-s_n}
        =\frac{1-\rho_n\Ebb[(K_n+1)\1_{\mathcal S_n}]}{1-s_n}.
\]
Conditioning on $C_n=c$ gives
\[
        \Ebb[(K_n+1)\1_{\{K_n\ge c\}}]
        =q_n^c\left(c+1+\frac{q_n}{\rho_n}\right).
\]
Therefore
\[
        \rho_n\Ebb[(K_n+1)\1_{\mathcal S_n}]
        =\rho_n\Ebb[(C_n+1)q_n^{C_n}]+q_n\Ebb q_n^{C_n}
        =\rho_nq_n\phi_n'(q_n)+s_n.
\]
By \eqref{eq:reset-rare-success-assumptions},
\[
        \Ebb Y_n\to1.
\]

Now consider the failed-excursion contribution
\[
        U_n:=\rho_ns_n\sum_{i=1}^{M_n}L_{n,i}^{\mathrm{fail}}
        =s_n\sum_{i=1}^{M_n}Y_{n,i}.
\]
For fixed $\theta\ge0$,
\[
        \Ebb e^{-\theta s_nY_n}=1-\theta s_n\Ebb Y_n+o(s_n)
        =1-\theta s_n+o(s_n),
\]
where the $o(s_n)$ term follows from the uniform boundedness of the second moments of $Y_n$.  Since $M_n$ is geometric on $\{0,1,2,\ldots\}$,
\[
\begin{aligned}
        \Ebb e^{-\theta U_n}
        &=\frac{s_n}{1-(1-s_n)\Ebb e^{-\theta s_nY_n}}  \\
        &\longrightarrow \frac1{1+\theta}.
\end{aligned}
\]
Thus $U_n\Rightarrow\Exp(1)$.

The same decomposition gives moment convergence.  Indeed, for fixed $r\ge1$, expand
\[
        \left(s_n\sum_{i=1}^{M_n}Y_{n,i}\right)^r
\]
according to the number of distinct indices appearing in the product.  The term with $r$ distinct indices equals
\[
        s_n^r\Ebb[(M_n)_r](\Ebb Y_n)^r
        =s_n^r\frac{r!(1-s_n)^r}{s_n^r}(\Ebb Y_n)^r
        \longrightarrow r!.
\]
All terms using only $k<r$ distinct indices are bounded by
\[
        C_r s_n^r\Ebb[M_n^k]=O_r(s_n^{r-k})\to0,
\]
because the fixed moments of $Y_n$ are uniformly bounded and, for a geometric variable with parameter $s_n$, $\Ebb M_n^k=O_k(s_n^{-k})$ for each fixed $k$.  Hence
\begin{equation}\label{eq:reset-failed-moment-convergence}
        \Ebb U_n^r\to r!,
        \qquad r\ge1\text{ fixed}.
\end{equation}

It remains to show that the final successful excursion is negligible.  Put
\[
        V_n:=\rho_ns_nL_n^{\mathrm{suc}}.
\]
For the first moment,
\[
        \Ebb V_n
        =\rho_ns_n\Ebb[C_n\mid \mathcal S_n]
        =\rho_n\Ebb[C_nq_n^{C_n}]
        =\rho_nq_n\phi_n'(q_n)\to0.
\]
For $r\ge2$, using $C_n\le K_n$ on the event \(\mathcal S_n\),
\[
        \Ebb V_n^r
        =(\rho_ns_n)^r\frac{\Ebb[C_n^r\1_{\mathcal S_n}]}{s_n}
        \le (\rho_ns_n)^r\frac{\Ebb K_n^r}{s_n}
        \le C_r s_n^{r-1}\to0.
\]
Thus $V_n\to0$ in every fixed $L^r$.  Combining this with \eqref{eq:reset-conditional-regenerative-decomposition} and \eqref{eq:reset-failed-moment-convergence} proves both the distributional convergence and the fixed-moment convergence for $\rho_ns_nT_n=U_n+V_n$.
\end{proof}

\begin{corollary}[Positive exponential moments]\label{cor:reset-positive-exponential-moments}
Assume the hypotheses of Theorem~\ref{thm:reset-rare-success}.  If, for a fixed $a\in(0,1)$,
\begin{equation}\label{eq:reset-right-tilt-condition}
        \frac{\phi_n(q_ne^{a\rho_ns_n})}{s_n}\longrightarrow1,
\end{equation}
then
\[
        \Ebb e^{a\rho_ns_nT_n}\longrightarrow\frac1{1-a}.
\]
If \eqref{eq:reset-right-tilt-condition} holds locally uniformly for $a$ in compact subintervals of $(0,1)$, then the positive exponential moments converge locally uniformly there.
\end{corollary}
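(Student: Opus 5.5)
Start from the exact probability generating function of Theorem~\ref{thm:reset-pgf} evaluated at $z=e^{a\rho_ns_n}$, with $a\in(0,1)$ fixed:
\[
        \Ebb e^{a\rho_ns_nT_n}
        =\frac{(1-q_nz)\phi_n(q_nz)}{1-z+\rho_nz\phi_n(q_nz)}.
\]
This identity holds as long as $q_nz<1$ and the denominator is positive. Both conditions will be checked along the way.

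Write $\varepsilon_n:=a\rho_ns_n\to0$, so that $z=e^{\varepsilon_n}=1+\varepsilon_n+\OO(\varepsilon_n^2)$. Then
\[
        1-q_nz=\rho_n-q_n\varepsilon_n+\OO(\varepsilon_n^2)=\rho_n(1+\OO(s_n)),
\]
which is positive for large $n$, so $q_nz<1$. Here $\varepsilon_n^2=\OO(\rho_n^2s_n^2)$ and $\rho_n\le1$ make the error relative to $\rho_n$ of order $s_n$. By hypothesis \eqref{eq:reset-right-tilt-condition}, $\phi_n(q_nz)=s_n(1+o(1))$. The numerator is therefore $\rho_ns_n(1+o(1))$.

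For the denominator, $1-z=-\varepsilon_n+\OO(\varepsilon_n^2)=-a\rho_ns_n(1+o(1))$, and $\rho_nz\phi_n(q_nz)=\rho_ns_n(1+o(1))$. Their sum is $\rho_ns_n(1-a+o(1))$. This is positive for large $n$ because $a<1$, so the real-$z$ clause of Theorem~\ref{thm:reset-pgf} applies and the transform is finite. Taking the ratio gives
\[
        \Ebb e^{a\rho_ns_nT_n}=\frac{1+o(1)}{1-a+o(1)}\longrightarrow\frac1{1-a}.
\]

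The only delicate point is the bookkeeping of the error terms. Each $o(1)$ must be relative to $\rho_ns_n$, not absolute. This holds because $\varepsilon_n^2/(\rho_ns_n)=a^2\rho_ns_n\to0$, and because the tilt hypothesis controls $\phi_n(q_nz)$ multiplicatively. The hypothesis $\rho_nq_n\phi_n'(q_n)\to0$ from Theorem~\ref{thm:reset-rare-success} is not needed beyond $s_n\to0$.

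For the locally uniform statement, note that the $\OO(\varepsilon_n^2)$ bounds are uniform for $a$ in a compact $[a_0,a_1]\subset(0,1)$. The ratio $\phi_n(q_nz)/s_n\to1$ is uniform by assumption, and $1-a\ge1-a_1>0$ bounds the denominator away from zero uniformly. Hence the convergence is locally uniform.
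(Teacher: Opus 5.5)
Your proposal is correct and follows the paper's proof essentially step for step. Both evaluate the exact PGF of Theorem~\ref{thm:reset-pgf} at $z_n=e^{a\rho_ns_n}$, check that $q_nz_n<1$ and that the denominator equals $(1-a)\rho_ns_n(1+o(1))>0$ so the extended real-$z$ clause applies, and then pass to the limit in the ratio; your explicit tracking of the $\OO(\varepsilon_n^2)$ errors relative to $\rho_ns_n$, and your observation that only $s_n\to0$ and the tilt condition are used, just make the same argument slightly more careful.
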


\begin{proof}
Let $F_n(z)=\Ebb z^{T_n}$.  By Theorem~\ref{thm:reset-pgf},
\[
        F_n(z)=\frac{(1-q_nz)\phi_n(q_nz)}{1-z+\rho_nz\phi_n(q_nz)}.
\]
Put $z_n=e^{a\rho_ns_n}$.  For all large $n$, $q_nz_n<1$, since
\[
        q_nz_n=(1-\rho_n)(1+a\rho_ns_n+o(\rho_ns_n))<1.
\]
The right-tilt condition gives
\[
        \phi_n(q_nz_n)=s_n(1+o(1)).
\]
Also
\[
        1-z_n=-a\rho_ns_n(1+o(1)),
        \qquad
        1-q_nz_n=\rho_n(1+o(1)),
\]
because $s_n\to0$.  Hence
\[
        1-z_n+\rho_nz_n\phi_n(q_nz_n)
        =(1-a)\rho_ns_n(1+o(1))>0
\]
for all sufficiently large $n$.  The extended form of Theorem~\ref{thm:reset-pgf} is therefore applicable at $z=z_n$, and substitution gives
\[
        \Ebb e^{a\rho_ns_nT_n}=F_n(z_n)\to\frac1{1-a}.
\]
The locally uniform statement is identical when the right-tilt convergence is locally uniform.
\end{proof}

\begin{remark}[Interpretation of the derivative condition]
The quantity
\[
        q_n\phi_n'(q_n)=\Ebb[C_nq_n^{C_n}]
\]
is the unnormalized first moment of the successful excursion length.  Thus
\[
        \rho_nq_n\phi_n'(q_n)\to0
\]
says that the final successful excursion is negligible on the rare scale $(\rho_ns_n)^{-1}$.  This is the natural regeneration analogue of the burn-in and one-block clump-control conditions in the stationary-entry hitting theorem.
\end{remark}

\begin{remark}[Reset asymptotic regimes]
The equal-probability reset regimes and the negligible-reset return to the ordinary Gumbel law are useful consequences of the regenerative representation, but they are not needed for the stationary-entry applications.  They are collected in Appendix~\ref{app:reset-asymptotic-regimes}.
\end{remark}

\subsection{Unequal probabilities}

The exact regenerative formulas are not restricted to the uniform case.  Let the conditional standard probabilities be
\[
        \pi_{i,n}=\frac{p_{i,n}}{q_n},
        \qquad 1\le i\le n.
\]
Let $C_{\pi,n}$ be the ordinary unequal-probability coupon collector completion time in standard-coupon draws, and set
\[
        \phi_{\pi,n}(z):=\Ebb z^{C_{\pi,n}},
        \qquad
        s_{\pi,n}:=\phi_{\pi,n}(q_n).
\]

\begin{corollary}[Unequal-probability reset collector]\label{cor:reset-unequal}
For arbitrary standard coupon probabilities $p_{1,n},\ldots,p_{n,n}$ summing to $q_n=1-\rho_n$, the reset-button completion time has probability generating function
\[
        \Ebb z^{T_n}
        =\frac{(1-q_nz)\phi_{\pi,n}(q_nz)}
        {1-z+\rho_nz\phi_{\pi,n}(q_nz)}.
\]
In particular,
\[
        \Ebb T_n=\frac{1-s_{\pi,n}}{\rho_ns_{\pi,n}}.
\]
If
\[
        s_{\pi,n}\to0,
        \qquad
        \rho_nq_n\phi_{\pi,n}'(q_n)\to0,
\]
then
\[
        \rho_ns_{\pi,n}T_n\Rightarrow\Exp(1).
\]
Moreover, for every fixed integer $r\ge1$,
\[
        \Ebb(\rho_ns_{\pi,n}T_n)^r\to r!.
\]
If, for a fixed $a\in(0,1)$, the corresponding right-tilt condition
\[
        \frac{\phi_{\pi,n}(q_ne^{a\rho_ns_{\pi,n}})}{s_{\pi,n}}\to1
\]
also holds, then $\Ebb e^{a\rho_ns_{\pi,n}T_n}\to(1-a)^{-1}$.
\end{corollary}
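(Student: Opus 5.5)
The plan is to observe that nothing in the earlier arguments used uniformity of the standard coupon probabilities. The corollary then follows by re-running Lemma~\ref{lem:reset-one-excursion}, Proposition~\ref{prop:reset-regen-representation}, Theorem~\ref{thm:reset-pgf}, Corollary~\ref{cor:reset-exact-mean}, Theorem~\ref{thm:reset-rare-success} and Corollary~\ref{cor:reset-positive-exponential-moments} with $C_n$ replaced by $C_{\pi,n}$ and $\phi_n$ by $\phi_{\pi,n}$.

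\textbf{Step 1: exact regeneration.} In Lemma~\ref{lem:reset-one-excursion} the number $K_n$ of standard draws before a reset is geometric with parameter $\rho_n$ regardless of how $q_n$ is split among the standard coupons. Conditionally on a draw being standard, the labels are i.i.d.\ with law $(\pi_{1,n},\dots,\pi_{n,n})$, independent of the reset indicators. Hence the completion requirement within an excursion is an independent copy of $C_{\pi,n}$, and the excursion success probability is $s_{\pi,n}=\phi_{\pi,n}(q_n)$. Proposition~\ref{prop:reset-regen-representation} uses only the fact that a reset returns the process to the empty state, so it carries over verbatim.

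\textbf{Step 2: exact formulas.} The proof of Theorem~\ref{thm:reset-pgf} uses only two facts: $C_{\pi,n}\ge1$ is integer-valued, and $\sum_k x^k\Pbb(C>k)=(1-\phi(x))/(1-x)$. It therefore gives the displayed PGF for $|z|\le1$. Corollary~\ref{cor:reset-exact-mean} is a differentiation at $z=1$ that is likewise insensitive to the form of $\phi$, and it gives $\Ebb T_n=(1-s_{\pi,n})/(\rho_n s_{\pi,n})$.

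\textbf{Step 3: limits.} Theorem~\ref{thm:reset-rare-success} is already stated for an arbitrary positive integer-valued collector time. Its hypotheses are exactly $s_{\pi,n}\to0$ and $\rho_nq_n\phi_{\pi,n}'(q_n)\to0$, so it gives both $\rho_ns_{\pi,n}T_n\Rightarrow\Exp(1)$ and $\Ebb(\rho_ns_{\pi,n}T_n)^r\to r!$. Corollary~\ref{cor:reset-positive-exponential-moments} applies under the stated right-tilt condition. Its proof used only the PGF of Step~2 together with the extended-domain clause of Theorem~\ref{thm:reset-pgf}, and that clause is valid here because positivity of the denominator is checked from the tilt condition alone.

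\textbf{Main check.} The only step needing care is confirming that no earlier proof secretly used the product formula of Lemma~\ref{lem:reset-uniform-laplace}. Scanning the proofs, that formula enters only in Corollary~\ref{cor:reset-JT-mean}, which is not claimed here. The moment bounds in Theorem~\ref{thm:reset-rare-success} rely on $C_n\le K_n$ on the success event and on geometric moments of $K_n$, and both remain valid in the unequal case. So the argument is a direct substitution.
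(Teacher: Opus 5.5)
Your proposal is correct and matches the paper's proof, which is the one-line observation that Theorem~\ref{thm:reset-pgf}, Corollary~\ref{cor:reset-exact-mean}, and Theorem~\ref{thm:reset-rare-success} were already stated for a general positive integer-valued collector time (the model section defines $C_n$ with arbitrary conditional probabilities $\pi_{i,n}$), so one simply substitutes $C_n=C_{\pi,n}$. Your explicit appeal to Corollary~\ref{cor:reset-positive-exponential-moments} for the tilted-moment claim is a welcome addition that the paper's citation leaves implicit; note only that its positivity check uses $s_{\pi,n}\to0$ in addition to the tilt condition, not the tilt condition alone, but this hypothesis is available because the corollary assumes the tilt condition on top of the earlier hypotheses.
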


\begin{proof}
This is exactly Theorem~\ref{thm:reset-pgf}, Corollary~\ref{cor:reset-exact-mean}, and Theorem~\ref{thm:reset-rare-success}, with $C_n=C_{\pi,n}$.
\end{proof}

\begin{remark}[Relation to the finite block formula]
Jockovi\'c and Todi\'c give a finite distribution formula for the unequal-probability case by summing over all reset-separated block lengths.  Corollary~\ref{cor:reset-unequal} is not a competing derivation of that finite formula; it is a regenerative compression of the same block decomposition into a one-line PGF identity.  Once the ordinary unequal collector Laplace transform is estimated, the same representation also supplies rare-success asymptotic laws.
\end{remark}

\section{The clumsy coupon collector}

\subsection{Model and stationary-entry flux}

The clumsy coupon collector was introduced by Aggarwal and Garoni \cite{AggarwalGaroni}.  Fix $p\in(0,1)$ and put $q=1-p$.  In the one-set clumsy coupon collector, each step selects one coupon type uniformly from $[n]$.  If type $i$ is selected, then its state is refreshed to present with probability $q$ and absent with probability $p$.  Other types are unchanged.

Let
\[
        X(t)=(X_1(t),\ldots,X_n(t))\in\{0,1\}^n,
\]
where $X_i(t)=1$ means type $i$ is currently present.  The completion set is
\[
        A_n:=\{(1,\ldots,1)\}.
\]

\begin{proposition}[Stationary law and entry flux]\label{prop:clumsy-flux}
The clumsy chain has stationary law
\[
        \pi_n=\operatorname{Bernoulli}(q)^{\otimes n}.
\]
Moreover, if
\[
        \mu_n:=\Pbb_{\pi_n}(X(0)\notin A_n,\ X(1)\in A_n),
\]
then
\[
        \mu_n=pq^n.
\]
\end{proposition}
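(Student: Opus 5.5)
The plan has two parts: verify that the product measure is stationary, then compute the entry flux by a one-step calculation under that measure.

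For stationarity, I will show that \(\pi_n=\operatorname{Bernoulli}(q)^{\otimes n}\) satisfies \(\pi_nP_n=\pi_n\) by conditioning on the selected type. Suppose \(X(0)\sim\pi_n\) and type \(i\) is selected, which happens with probability \(1/n\). Then \(X_i(1)\) is a fresh \(\operatorname{Bernoulli}(q)\) variable, independent of everything else. Every coordinate \(X_j(1)=X_j(0)\) with \(j\ne i\) keeps its \(\operatorname{Bernoulli}(q)\) law, and these coordinates remain independent of each other and of the new \(X_i(1)\). So, conditional on each selection, \(X(1)\sim\pi_n\), and averaging over the \(n\) choices of \(i\) gives \(X(1)\sim\pi_n\). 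The chain is irreducible: any configuration can reach any other by refreshing coordinates one at a time, since each refresh outcome has positive probability. Hence \(\pi_n\) is the unique stationary law.

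For the flux, I will use the observation that a single step changes at most one coordinate. Therefore \(X(0)\notin A_n\) and \(X(1)\in A_n\) together force the following: \(X(0)\) has exactly one absent coordinate \(i\), the selected type is \(i\), and the refresh makes it present. Summing over \(i\), the flux is
\[
\mu_n=\sum_{i=1}^n \Pbb_{\pi_n}\bigl(X(0)=\1-e_i\bigr)\cdot\frac1n\cdot q=n\cdot pq^{n-1}\cdot\frac qn=pq^n .
\]

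As a cross-check, I will use the stationary identity for the target set. Stationarity gives \(\pi_n(A_n)=\Pbb_{\pi_n}(X(1)\in A_n)\). This equals the flux \(\mu_n\) plus the probability of staying in \(A_n\). Starting from \(A_n\), the chain stays there exactly when the refresh gives "present", which has probability \(q\) whatever type is selected. So \(q^n=\mu_n+q\cdot q^n\), and hence \(\mu_n=pq^n\), agreeing with the direct count.

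There is no real obstacle here. The only point needing care is the independence argument in the stationarity step: the refreshed coordinate must be independent of the untouched ones. This holds because the refresh outcome is drawn independently of the current state.
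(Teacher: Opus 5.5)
Your proposal is correct and matches the paper's proof: stationarity comes from observing that the selected coordinate is replaced by an independent Bernoulli$(q)$ variable, together with irreducibility. The paper then computes the flux as $\Pbb_{\pi_n}(X(1)\in A_n)-\Pbb_{\pi_n}(X(0)\in A_n,X(1)\in A_n)=q^n-q^{n+1}$, which is exactly your cross-check, while your direct count through the states $\1-e_i$ is an equally valid route to the same value $pq^n$.
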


\begin{proof}
The chain is random-scan heat-bath dynamics for the product Bernoulli$(q)$ law: when coordinate $i$ is selected, it is replaced by an independent Bernoulli$(q)$ variable.  Since $p,q\in(0,1)$, every coordinate can be refreshed to either value, so the finite chain is irreducible.  Hence $\pi_n=\operatorname{Bernoulli}(q)^{\otimes n}$ is stationary.

Under stationarity,
\[
        \Pbb_{\pi_n}(X(1)\in A_n)=q^n.
\]
Also, conditional on $X(0)\in A_n$, the chain remains in $A_n$ at the next step exactly when the selected coordinate is refreshed to $1$, an event of probability $q$.  Therefore
\[
\begin{aligned}
        \mu_n
        &=\Pbb_{\pi_n}(X(1)\in A_n)-\Pbb_{\pi_n}(X(0)\in A_n,X(1)\in A_n) \\
        &=q^n-q^{n+1}=pq^n.
\end{aligned}
\]
\end{proof}

\subsection{One-block clump control}

\begin{lemma}[Clumsy one-block clump control]\label{lem:clumsy-clump}
Let $b_n\le n^M$ for a fixed $M<\infty$.  Let
\[
        E_t=\1\{X(t-1)\notin A_n,\ X(t)\in A_n\},
        \qquad
        N_b=\sum_{t=1}^{b_n}E_t.
\]
Then
\[
        \Pbb_{\pi_n}(N_b\ge1)=b_n p q^n(1+o(1)).
\]
\end{lemma}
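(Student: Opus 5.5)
We verify (M1) through the first and second factorial moment criterion of the remark following Theorem~\ref{thm:stationary-entry}. By stationarity and Proposition~\ref{prop:clumsy-flux},
\[
        \Ebb_{\pi_n}N_b=\sum_{t=1}^{b_n}\Pbb_{\pi_n}(E_t=1)=b_npq^n
\]
exactly. It therefore suffices to show $\Ebb_{\pi_n}[(N_b)_2]=o(b_npq^n)$. Expanding, and using stationarity once more,
\[
        \Ebb_{\pi_n}[(N_b)_2]=2\sum_{1\le s<t\le b_n}\Pbb_{\pi_n}(E_s=E_t=1)
        \le 2b_n\,\mu_n\sum_{d=1}^{b_n-1}\Pbb\bigl(E_{d}=1\,\big|\,X(0)=\mathbf 1\bigr),
\]
where $\mathbf 1$ denotes the all-present state. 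The reduction to $X(0)=\mathbf 1$ uses the Markov property at the first entry time $s$. The task is thus to show
\[
        \sum_{d=1}^{b_n}\Pbb_{\mathbf 1}\bigl(E_d=1\bigr)=o(1).
\]

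For re-entry at time $d$ starting from $\mathbf 1$, the chain must leave $\mathbf 1$, which requires at least one step in which some coordinate is refreshed to $0$, and then be at $\mathbf 1$ again at time $d$. The plan is to bound $\Pbb_{\mathbf 1}(X(d)=\mathbf 1,\ X(d-1)\ne\mathbf 1)$ by splitting on the set $S$ of coordinates selected at least once during $[1,d]$. Given the selection sequence, each selected coordinate has final value an independent Bernoulli$(q)$, determined by its last refresh, while unselected coordinates remain $1$. Hence
\[
        \Pbb_{\mathbf 1}(X(d)=\mathbf 1\mid S)=q^{|S|}.
\]
For $E_d=1$ we additionally need that the last step flips a $0$ to $1$. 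This forces the coordinate $i$ selected at step $d$ to have been selected earlier, with its previous refresh producing $0$, or else that step is a no-op. So $i$ has been selected at least twice.

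This gives two regimes. For small $d$, say $d\le n/2$ or so, we have $|S|\ge1$, and the chance that the step-$d$ coordinate was already among the earlier selections is at most $(d-1)/n$. This yields a bound of order $(d/n)\,q^{|S|}$ with $|S|\gtrsim$ a constant fraction of $d$ typically; more crudely, $\Pbb\le \Ebb[q^{|S_{d-1}|}]\cdot\frac{d-1}{n}$, and $\Ebb q^{|S_{d-1}|}$ decays geometrically in $d$ while $d\ll n$. Summing over $d\le n/2$ then gives $O(1/n)$. For large $d\ge n/2$, we have $|S|\ge cn$ except with probability $e^{-cn}$ (coupon collection), so the term is at most $q^{cn}+e^{-cn}$. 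Summing over at most $b_n\le n^M$ values of $d$ gives $n^M e^{-c'n}\to0$. Both regimes are $o(1)$, giving the result.

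The main obstacle is handling the small-$d$ regime cleanly, especially the case $d=2$: leaving and returning in two steps needs the same coordinate selected twice, with probability about $p\cdot q/n$. The estimate $\Ebb q^{|S_{d-1}|}\le (q+(d-1)/n)^{d-1}$ (each new step either selects a fresh coordinate, costing a factor $q$, or repeats, with probability $\le (d-1)/n$) should make the geometric decay rigorous for $d\le \epsilon n$. In the intermediate range, $|S_d|\ge d/2$ holds with overwhelming probability.
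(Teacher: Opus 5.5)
Your proposal is correct, but it takes a genuinely different route from the paper.

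\textbf{Where the two proofs agree.} Both use the exact first moment and the same reduction via the Markov property at an entry time:
\[
\Ebb_{\pi_n}[(N_b)_2]\le 2b_npq^n\,\Ebb_{\mathbf 1}\sum_{d\le b_n}E_d .
\]
Both then show that the expected number of re-entries from the all-present state is $o(1)$.

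\textbf{The paper's route.} The paper projects to the count $K$ of absent types, which is a birth-death chain. It uses the gambler's-ruin formula to show that an excursion from $K=1$ returns to $0$ before reaching $m_n=\floor{\delta n}$ with probability $O(n^{-1})$. It then restarts by the strong Markov property at the hitting time of $m_n$. After that, last-refresh bits give $q^{m_n}$ per later time step.

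\textbf{Your route.} You condition on the entire selection sequence from time $0$. This gives the exact identity
\[
\Pbb_{\mathbf 1}(E_d=1)=\frac pn\,\Ebb\bigl[|S_{d-1}|\,q^{|S_{d-1}|}\bigr],
\]
so the whole problem reduces to the occupancy process of uniform selections. The product bound on $\Ebb q^{|S_k|}$ then gives geometric decay in $d$.

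\textbf{What each buys.} Your argument avoids hitting-probability formulas and the strong Markov restart, and it yields an explicit $O(n^{-1})$ bound on the expected re-entry count. It does so by exploiting the heat-bath product structure throughout. The paper's excursion decomposition relies only on the birth-death structure of $K$ until the final phase. That makes it the more portable metastability template.

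\textbf{Two points to tighten.} First, with your factor $q+(j-1)/n$, the geometric-decay range must be $d\le\varepsilon n$ with $q+\varepsilon<1$, i.e.\ $\varepsilon<p$, not $d\le n/2$. Alternatively, use the sharper conditional factor $q+p|S_{j-1}|/n\le q+p(j-1)/n$, which allows any fixed $\varepsilon<1$.

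Second, you need no separate concentration claim for the intermediate and large ranges. Since $|S_k|$ is nondecreasing in $k$, for $d>\varepsilon n$ you get
\[
\Pbb_{\mathbf 1}(E_d=1)\le\Pbb_{\mathbf 1}(X(d)=\mathbf 1)=\Ebb q^{|S_d|}\le \Ebb q^{|S_{\floor{\varepsilon n}}|}\le (q+\varepsilon)^{\floor{\varepsilon n}} .
\]
Summing over at most $n^M$ values of $d$ gives $e^{-\Omega(n)}$, which closes the argument.
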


\begin{proof}
The first moment is exact:
\[
        \Ebb_{\pi_n}N_b=b_npq^n.
\]
We prove that the second factorial moment is negligible relative to this first moment.

Let
\[
        K(t):=\#\{i:X_i(t)=0\}
\]
be the number of absent types.  Then $A_n=\{K=0\}$.  The process $K(t)$ is a birth-death chain with transition probabilities
\[
        k\to k+1 \quad\text{with probability}\quad \frac{n-k}{n}p,
        \qquad
        k\to k-1 \quad\text{with probability}\quad \frac{k}{n}q,
\]
and otherwise it stays fixed.  Write $\Pbb_k$ and $\Ebb_k$ for the law and expectation of this count chain started from $K(0)=k$.

Choose a small constant $\delta>0$ such that
\[
        \frac{q\delta}{p(1-\delta)}\le\frac12,
\]
and set $m_n=\floor{\delta n}$.  Starting from $K=1$, let
\[
        \tau_0=\inf\{t:K(t)=0\},
        \qquad
        \tau_m=\inf\{t:K(t)=m_n\}.
\]
For $1\le \ell\le m_n-1$, the ratio of downward to upward birth-death probabilities is
\[
        r_\ell=\frac{d_\ell}{u_\ell}
        =\frac{(\ell/n)q}{((n-\ell)/n)p}
        =\frac{q\ell}{p(n-\ell)}.
\]
Thus $r_\ell\le1/2$ for $1\le\ell\le m_n-1$, and $r_1=\OO(n^{-1})$.
We recall the elementary derivation of the required hitting probability.  Holding probabilities do not affect hitting probabilities.  If
\[
        h(i)=\Pbb_i(\tau_0<\tau_m),
        \qquad 0\le i\le m,
\]
for a birth-death chain on $\{0,1,\ldots,m\}$ with upward probabilities $u_i$ and downward probabilities $d_i$, then $h(0)=1$, $h(m)=0$, and
\[
        u_i\bigl(h(i+1)-h(i)\bigr)
        =d_i\bigl(h(i)-h(i-1)\bigr),
        \qquad 1\le i\le m-1.
\]
Iterating this first-difference recursion and imposing the two boundary values gives, starting from $1$,
\[
        \Pbb_1(\tau_0<\tau_m)
        =
        \frac{\sum_{j=1}^{m-1}\prod_{\ell=1}^{j} d_\ell/u_\ell}
        {1+\sum_{j=1}^{m-1}\prod_{\ell=1}^{j} d_\ell/u_\ell}.
\]
Applying this with $m=m_n$ and $d_\ell/u_\ell=r_\ell$ gives
\[
\begin{aligned}
        \Pbb_1(\tau_0<\tau_m)
        &\le \sum_{j=1}^{m_n-1}\prod_{\ell=1}^j r_\ell  \\
        &\le r_1\sum_{j=1}^{m_n-1}2^{-(j-1)}
        =\OO(n^{-1}).
\end{aligned}
\]
Let $\theta_n:=\Pbb_1(\tau_0<\tau_m)=\OO(n^{-1})$.  Starting from $0$, ignore the initial holding time at $0$; no new entry is counted while the chain remains at $0$, because $E_t$ records only transitions from outside $A_n$ into $A_n$.  When the chain first leaves $0$, it moves to $1$.  Each excursion from $1$ returns to $0$ before reaching $m_n$ with probability $\theta_n$, and otherwise reaches $m_n$ first.  Hence the number of new returns to $0$ before the first visit to $m_n$ is stochastically dominated by a geometric number with mean
\[
        \frac{\theta_n}{1-\theta_n}=\OO(n^{-1}).
\]

After the chain reaches $m_n$, a return to $0$ in a polynomial block is exponentially unlikely.  By the strong Markov property, restart at the first time when $K$ reaches $m_n$, and choose $m_n$ absent coordinates at that time.  Fix a later time $t$.  If one of these coordinates has not been selected since the restart time, then it is still absent, so $K(t)\ne0$.  Otherwise, conditional on the update history, each chosen coordinate is present at time $t$ only if its last refresh since the restart was to $1$.  These last refresh bits are independent and have probability $q$.  Hence
\[
        \Pbb(K(t)=0\mid\text{update history})\le q^{m_n}.
\]
Therefore
\[
        \sup_{k\ge m_n}\Ebb_k\sum_{t=1}^{b_n}E_t
        \le b_nq^{m_n}=e^{-\Omega(n)}.
\]
Consequently
\[
        \Ebb_0\sum_{t=1}^{b_n}E_t=\OO(n^{-1})+e^{-\Omega(n)}=o(1).
\]

Now use stationarity and the Markov property.  Conditional on $E_s=1$, the chain is in $A_n$ at time $s$.  Therefore
\[
\begin{aligned}
        \Ebb_{\pi_n}[(N_b)_2]
        &=2\sum_{1\le s<t\le b_n}\Pbb_{\pi_n}(E_s=1,E_t=1) \\
        &\le 2b_npq^n\,\Ebb_0\sum_{u=1}^{b_n}E_u \\
        &=o(b_npq^n).
\end{aligned}
\]
Since
\[
        0\le \Ebb_{\pi_n} N_b-\Pbb_{\pi_n}(N_b\ge1)\le \Ebb_{\pi_n}[(N_b)_2],
\]
the claim follows.
\end{proof}

\subsection{Mixing and burn-in}

\begin{lemma}[Clumsy mixing and burn-in]\label{lem:clumsy-mixing}
For the clumsy chain,
\[
        \sup_x\|P^t(x,\cdot)-\pi_n\|_{\TV}\le n(1-1/n)^t.
\]
In particular, with $b_n=n^3$ and $h_n=r_n=\ceil{C n^2}$ for any fixed $C>\log(1/q)$, the mixing and burn-in assumptions in Theorem~\ref{thm:stationary-entry} hold for the empty initial state.
\end{lemma}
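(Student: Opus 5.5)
The total-variation bound comes from a coupling argument. The inputs to Theorem~\ref{thm:stationary-entry} are then checked by direct arithmetic.

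\textbf{Coupling.} Run two copies of the clumsy chain, one from an arbitrary state $x$ and one from $\pi_n$ (or from any second state $y$). At each step both copies use the same selected coordinate $i$ and the same refresh bit $\xi\sim\mathrm{Bernoulli}(q)$. Once coordinate $i$ has been selected, the two copies agree in that coordinate forever. Hence the copies coincide at time $t$ as soon as every coordinate has been selected at least once by time $t$. The probability that a fixed coordinate is never selected in $t$ steps is $(1-1/n)^t$. A union bound over the $n$ coordinates and the coupling inequality give
\[
  \|P^t(x,\cdot)-\pi_n\|_{\TV}\le \Pbb(\text{some coordinate unselected})\le n(1-1/n)^t .
\]
Averaging over a stationary start for the second copy gives the bound uniformly in $x$.

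\textbf{Mixing hypotheses.} Take $b_n=n^3$ and $h_n=r_n=\ceil{Cn^2}$. The bound gives
\[
  \alpha_n(h_n)\le n e^{-h_n/n}\le n e^{-Cn}.
\]
By Proposition~\ref{prop:clumsy-flux}, $\mu_n=pq^n=p\,e^{-n\log(1/q)}$, so $b_n\mu_n=pn^3q^n$. Then
\[
  \frac{\alpha_n(h_n)}{b_n\mu_n}\le \frac{n^{-2}}{p}\,e^{-n(C-\log(1/q))}\to0,
\]
because $C>\log(1/q)$. The remaining requirements are immediate: $h_n=o(b_n)$, $b_n\mu_n\to0$, $r_n\mu_n=O(n^2q^n)\to0$, and $\|P^{r_n}(x_n,\cdot)-\pi_n\|_{\TV}\le ne^{-Cn}\to0$. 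Also $\pi_n(A_n)=q^n\to0$ and $\mu_n\to0$. Finally, (M1) for $b_n=n^3$ is Lemma~\ref{lem:clumsy-clump} with $M=3$.

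\textbf{Burn-in from the empty state.} This is the main obstacle: we need $\Pbb_{\mathbf 0}(T_{A_n}\le r_n)=o(1)$. Start from $\mathbf 0$ and fix a time $t\le r_n$.
\begin{itemize}
\item If some coordinate has not been selected by time $t$, it is still absent, so $X(t)\notin A_n$.
\item Otherwise, conditional on the selection history, each coordinate's current value is its last refresh bit. These bits are independent Bernoulli$(q)$, so $\Pbb(X(t)\in A_n\mid\text{history})\le q^n$.
\end{itemize}
So $\Pbb_{\mathbf 0}(X(t)\in A_n)\le q^n$ for every $t$. A union bound over $t\le r_n$ gives
\[
  \Pbb_{\mathbf 0}(T_{A_n}\le r_n)\le r_nq^n=O(n^2q^n)\to0 .
\]
This is the same last-refresh argument already used in Lemma~\ref{lem:clumsy-clump}, so no new idea is required.
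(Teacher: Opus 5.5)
Your proposal is correct and matches the paper's proof essentially step by step: the same coupling with shared selected coordinates and shared refresh bits, the same comparison of $ne^{-Cn}$ with $n^3pq^n$, and the same last-refresh-bit bound $\Pbb_{\mathbf 0}(X(t)\in A_n)\le q^n$ followed by a union bound over $t\le r_n$. The only addition is that you state explicitly that $\|P^{r_n}(x_n,\cdot)-\pi_n\|_{\TV}\le ne^{-Cn}\to0$, which the paper's proof leaves implicit.
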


\begin{proof}
Couple two copies by using the same selected coordinates and the same refresh bits.  Once a coordinate has been selected at least once, the two copies agree at that coordinate.  Therefore the copies have coalesced once all coordinates have been selected.  The probability that some coordinate has not been selected by time $t$ is at most $n(1-1/n)^t$, proving the mixing bound.

For $h_n=\ceil{C n^2}$ with $C>\log(1/q)$, this bound is $o(b_npq^n)$, because $b_npq^n=n^3pq^n$ and
\[
        n(1-1/n)^{h_n}\le n e^{-Cn}=o(n^3pq^n).
\]
Also $h_n=o(b_n)$ and $b_npq^n\to0$.  Starting from the empty state, condition on the sequence of selected coordinates up to a fixed positive time $t$.  If some coordinate has never been selected, then $X(t)\notin A_n$; otherwise the last refresh bits of all $n$ coordinates must all be present, an event of probability $q^n$.  Thus $\Pbb_\emptyset(X(t)\in A_n)\le q^n$ for every $t\ge1$, and the union bound gives
\[
        \Pbb_\emptyset(T_{A_n}\le r_n)\le r_nq^n=o(1).
\]
Finally $r_npq^n\to0$.
\end{proof}

\subsection{Fixed-\texorpdfstring{$p$}{p} hitting law and moments}

\begin{theorem}[Fixed-$p$ clumsy collector]\label{thm:clumsy-final}
For each fixed $p\in(0,1)$, with $q=1-p$, the completion time $T_n$ of the one-set clumsy coupon collector started from the empty collection satisfies
\[
        p q^n T_n\Rightarrow \Exp(1).
\]
\end{theorem}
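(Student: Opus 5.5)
The plan is to apply Theorem~\ref{thm:stationary-entry} with target set $A_n=\{(1,\ldots,1)\}$, initial state $x_n=\emptyset$ (the all-absent configuration), and the parameter choices of Lemma~\ref{lem:clumsy-mixing}: $b_n=n^3$ and $h_n=r_n=\ceil{Cn^2}$ for a fixed $C>\log(1/q)$. By Proposition~\ref{prop:clumsy-flux}, the stationary-entry flux is $\mu_n=pq^n$. It then remains to check the hypotheses of the theorem one by one, and the conclusion $\mu_nT_{A_n}\Rightarrow\Exp(1)$ is exactly $pq^nT_n\Rightarrow\Exp(1)$, since $T_n=T_{A_n}$ under $\Pbb_{\emptyset}$.

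\emph{Rarity and scale conditions.} We have $\pi_n(A_n)=q^n\to0$ and $\mu_n=pq^n\to0$ because $p,q\in(0,1)$ are fixed. Also $b_n,h_n,r_n\to\infty$ and $h_n=o(b_n)$ since $n^2=o(n^3)$. Finally $b_n\mu_n=n^3pq^n\to0$, because the geometric decay of $q^n$ beats any polynomial.

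\emph{Mixing.} Lemma~\ref{lem:clumsy-mixing} gives $\alpha_n(h_n)\le n(1-1/n)^{h_n}\le ne^{-Cn}$. The ratio to $b_n\mu_n$ is $e^{-Cn}/(n^2pq^n)=e^{-n(C-\log(1/q))}/(pn^2)\to0$, which is exactly why $C>\log(1/q)$ is required.

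\emph{(M1).} Lemma~\ref{lem:clumsy-clump} applies with $M=3$, since $b_n=n^3\le n^3$. It gives $\Pbb_{\pi_n}(N_n^{(b)}\ge1)=b_npq^n(1+o(1))=b_n\mu_n(1+o(1))$.

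\emph{(M2).} Lemma~\ref{lem:clumsy-mixing} already gives $\Pbb_\emptyset(T_{A_n}\le r_n)\le r_nq^n=o(1)$ and $r_n\mu_n\to0$. Its mixing bound gives $\|P^{r_n}(\emptyset,\cdot)-\pi_n\|_{\TV}\le ne^{-Cn}=o(1)$.

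None of these steps is a real obstacle, since the substantive work (clump control and coupling) sits in the preceding lemmas. The only point needing care is consistency: the same $b_n$ must be used in (M1) and in the condition $\alpha_n(h_n)=o(b_n\mu_n)$. With $b_n=n^3$, both hold once $C>\log(1/q)$ is fixed.
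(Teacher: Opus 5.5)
Your proposal is correct and matches the paper's proof: apply Theorem~\ref{thm:stationary-entry} with $\mu_n=pq^n$ from Proposition~\ref{prop:clumsy-flux}, (M1) from Lemma~\ref{lem:clumsy-clump}, and mixing and burn-in from Lemma~\ref{lem:clumsy-mixing}. Your explicit checks, such as $\alpha_n(h_n)/(b_n\mu_n)\le e^{-n(C-\log(1/q))}/(pn^2)\to0$ and taking $M=3$ in the clump lemma, just spell out what the paper leaves implicit.
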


\begin{proof}
Apply Theorem~\ref{thm:stationary-entry}.  Proposition~\ref{prop:clumsy-flux} gives the stationary-entry flux $\mu_n=pq^n$.  Lemma~\ref{lem:clumsy-clump} gives the stationary one-block law.  Lemma~\ref{lem:clumsy-mixing} gives fast mixing and negligible burn-in from the empty state.  Hence $pq^nT_n\Rightarrow\Exp(1)$.
\end{proof}

\begin{corollary}[Uniform exponential tails and moments]\label{cor:clumsy-moments}
Let $Y_n:=p q^n T_n$ in the fixed-$p$ clumsy collector started from the empty collection.  There are constants $C,c\in(0,\infty)$, depending only on $p$, such that
\[
        \Pbb(Y_n>x)\le C e^{-cx},
        \qquad x\ge0,
\]
for all sufficiently large $n$.  Consequently, for every fixed $r\ge1$,
\[
        \Ebb Y_n^r\longrightarrow \Ebb Z^r=r!,
        \qquad Z\sim\Exp(1).
\]
In particular,
\[
        \Ebb T_n\sim \frac{1}{p q^n},\qquad
        \Var(T_n)\sim \frac{1}{p^2 q^{2n}}.
\]
\end{corollary}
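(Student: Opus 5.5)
The plan is a uniform geometric tail bound obtained by block restarts, and then the standard step from convergence in distribution plus uniform integrability to convergence of moments. Set $L_n:=\ceil{1/(pq^n)}$ and fix a block length $\ell_n:=\ceil{K/(pq^n)}$, where $K$ is a constant to be chosen. The key claim is
\[
        \sup_x \Pbb_x(T_n>\ell_n)\le \gamma<1
\]
for all large $n$, with $\gamma$ depending only on $p$. Since this bound is uniform over starting states, the Markov property applied at times $\ell_n,2\ell_n,\ldots$ gives
\[
        \Pbb(T_n>j\ell_n)\le\gamma^j .
\]
This translates into $\Pbb(Y_n>x)\le \gamma^{\floor{x/(2K)}}\le Ce^{-cx}$ for all large $n$, because $pq^n\ell_n\le 2K$ once $n$ is large.

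To prove the uniform block bound, I would split each block into a burn-in stretch of length $h_n=\ceil{Cn^2}$ followed by the remainder. Lemma~\ref{lem:clumsy-mixing} shows that after $h_n$ steps the law of the chain is within $ne^{-Cn}=o(1)$ of $\pi_n$ in total variation, uniformly in the starting state $x$. By Lemma~\ref{lem:path-tv-comparison}, the probability of never hitting $A_n$ during the remaining $\ell_n-h_n$ steps therefore differs by $o(1)$ from the same probability under stationarity. Theorem~\ref{thm:clumsy-final} gives the stationary survival asymptotic; more precisely, the stationary-start part of the proof of Theorem~\ref{thm:stationary-entry} gives $\Pbb_{\pi_n}(T_{A_n}>u_n)\to e^{-K}$ along any sequence $u_n$ with $\mu_nu_n\to K$. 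Hence
\[
        \sup_x\Pbb_x(T_n>\ell_n)\le e^{-K}+o(1)\le \tfrac12
\]
for large $n$, once we take, say, $K=1$. A small check is needed when $x\in A_n$, where $T_n=0$ trivially. I would also note that the uniformity in $x$ comes solely from the supremum in the mixing bound, which is exactly what Lemma~\ref{lem:clumsy-mixing} provides.

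Given the exponential tail, the family $\{Y_n^r\}$ is uniformly integrable for every fixed $r$. Combining this with $Y_n\Rightarrow \Exp(1)$ yields $\Ebb Y_n^r\to r!$. Taking $r=1$ and $r=2$ then gives $\Ebb T_n\sim 1/(pq^n)$ and $\Ebb T_n^2\sim 2/(pq^n)^2$, so $\Var T_n\sim 1/(p^2q^{2n})$.

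The main obstacle is making the stationary survival estimate usable as a uniform-in-start bound. The plan is to reuse the stationary half of the proof of Theorem~\ref{thm:stationary-entry} rather than its conclusion from the empty state, and to ensure that the block lengths $b_n=n^3$ still fit the hypotheses of Lemma~\ref{lem:clumsy-clump} on the horizon $\ell_n$. Everything else in the argument is routine.
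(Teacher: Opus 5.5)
Your argument is correct, but it restarts on a different scale from the paper.

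The paper never uses the survival limit in the tail bound. It takes short polynomial blocks $L_n=n^3+\ceil{C_0n^2}$. It combines the mixing estimate $\alpha_n(h_n)=o(b_npq^n)$ from Lemma~\ref{lem:clumsy-mixing} with the one-block law of Lemma~\ref{lem:clumsy-clump} and Lemma~\ref{lem:path-tv-comparison}, giving $\Pbb_x(T_n\le L_n)\ge\frac12 b_npq^n$ uniformly in $x$. It then chains about $x/(b_npq^n)$ such blocks to obtain the bound $\exp\{-mb_npq^n/2\}$.

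You instead take blocks of length $\ell_n\approx K/(pq^n)$ and feed in the full stationary survival asymptotic $\Pbb_{\pi_n}(T_{A_n}>u_n)\to e^{-K}$. Each block then fails with probability at most a constant $\gamma<1$. At the level of your tail argument you only need $o(1)$ total-variation closeness after burn-in, because the finer requirement $\alpha_n(h_n)=o(b_n\mu_n)$ is absorbed into that asymptotic.

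Your route is the reusable principle that survival on the natural scale, uniform over starting states, forces geometric tails. Its cost is that it must reach inside the proof of Theorem~\ref{thm:stationary-entry}, whose statement covers only the start $x_n$ satisfying (M2); you correctly flag this. The paper's route is more self-contained: it uses only the one-block input and the mixing bound directly, without invoking the survival limit at all.

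Your closing concern about fitting $b_n=n^3$ into Lemma~\ref{lem:clumsy-clump} on the horizon $\ell_n$ is unnecessary. The stationary half of that proof already covers the exponentially long horizon by chaining $m_n$ polynomial blocks. Finally, the unused definition $L_n:=\ceil{1/(pq^n)}$ can be dropped.
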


\begin{proof}
Choose
\[
        b_n=n^3,\qquad h_n=\ceil{C_0n^2},
\]
where $C_0>\log(1/q)$, and put $L_n=b_n+h_n$.  By Lemma~\ref{lem:clumsy-mixing},
\[
        \sup_x\|P^{h_n}(x,\cdot)-\pi_n\|_{\TV}=o(b_n p q^n).
\]
By Lemma~\ref{lem:clumsy-clump}, under stationarity the probability that an active block of length $b_n$ contains at least one new entry into the all-present state is
\[
        b_n p q^n(1+o(1)).
\]
Therefore, uniformly over all starting states $x$,
\[
        \Pbb_x(T_n\le L_n)\ge \frac12 b_n p q^n
\]
for all sufficiently large $n$.  Indeed, hits during the initial gap only help.  After the gap, the path-event total-variation comparison from Section~\ref{sec:stationary-entry-module} shows that the probability of an entry during the following active block differs from its stationary value by at most the total-variation distance after the gap.

By the strong Markov property at times $L_n,2L_n,\ldots$, for every integer $m\ge0$,
\[
        \Pbb_x(T_n>mL_n)\le
        \left(1-\frac12 b_n p q^n\right)^m
        \le \exp\{-m b_n p q^n/2\}.
\]
Since $h_n=o(b_n)$, we have $b_n/L_n\to1$.  Hence there are constants $C,c>0$ such that, for all large $n$ and all $x\ge0$,
\[
        \Pbb(pq^nT_n>x)\le C e^{-cx}.
\]
This gives uniform integrability of $Y_n^r$ for every fixed $r\ge1$.  Combining this with Theorem~\ref{thm:clumsy-final}, which gives $Y_n\Rightarrow Z\sim\Exp(1)$, yields moment convergence.  The formulas for the mean and variance follow from the cases $r=1$ and $r=2$.
\end{proof}

\begin{remark}[Relation to the Aggarwal--Garoni fixed-$p$ picture]
Aggarwal and Garoni conjecture exponential order for the mean and variance of the clumsy coupon collector time, together with a Gumbel limit for the standardized completion time \cite{AggarwalGaroni}.  A contemporaneous independent manuscript of Attrill and Garoni, submitted to arXiv concurrently with this work, also proves the fixed-$p$ exponential limit and fixed-$p$ moment asymptotics for the clumsy model by different methods, and treats additional moving-$p$ regimes \cite{AttrillGaroni2026}.  In the one-set fixed-$p$ model considered here, the stationary-entry mechanism gives the conclusion
\[
        p q^nT_n\Rightarrow \Exp(1),
        \qquad
        \Ebb T_n\sim\frac1{pq^n},
        \qquad
        \Var(T_n)\sim\frac1{p^2q^{2n}}.
\]
Consequently
\[
        \frac{T_n-\Ebb T_n}{\sqrt{\Var(T_n)}}
        \Rightarrow \Exp(1)-1,
\]
not a Gumbel law.  Thus any Gumbel behavior, if present, must belong to a different asymptotic regime or a different model convention, rather than to the fixed-$p$ one-set stationary-entry regime analyzed above.
\end{remark}

\section{The careless coupon collector}

\subsection{Model and post-loss convention}

The careless coupon collector was introduced by Cruciani and Dudeja \cite{CrucianiDudeja}.  Fix $p\in(0,1)$ and set $q=1-p$.  In the one-set careless coupon collector, at each round one uniformly random coupon is drawn and added if missing; after this draw, each currently held coupon is independently lost with probability $p$.  We observe the chain after the loss step unless explicitly stated otherwise.

\begin{remark}[Completion convention and relation to Cruciani--Dudeja]
All careless results below use the post-loss convention of Cruciani and Dudeja \cite{CrucianiDudeja}: after the draw, the independent loss step is performed, and the resulting set is the state of the chain.  Equivalently, if $J_t$ is the coupon type drawn at time $t$, their update may be written as
\[
        S_{t+1}=(S_t\cup\{J_t\})\setminus\{i:L_{i,t}=1\}.
\]
Thus $T_{n,p}$ here is the same post-loss hitting time for the all-present state, not a variant in which completion is checked before losses.

If one instead checked completion immediately after the draw but before losses, then the target-entry flux would change by a factor of order $q^{-n}$.  Indeed, from level $n-1$, pre-loss completion would require only drawing the unique missing coupon, whereas post-loss completion additionally requires all $n$ coupons to survive the loss step, contributing the factor $q^n$.  The same stationary-entry theorem applies to that alternative convention, but the normalization must be replaced by the corresponding pre-loss stationary-entry flux.
\end{remark}

Let $S_t\subseteq[n]$ be the set of coupon types held after the loss step at time $t$, and let
\[
        K_t:=|S_t|.
\]
Completion after the loss step is the target event $K_t=n$.

\begin{proposition}[Count-chain transition]\label{prop:careless-transition}
The process $K_t$ is a Markov chain on $\{0,1,\ldots,n\}$. Its transition rule is
\[
K_{t+1}\mid K_t=k\sim
\begin{cases}
\Bin(k,q),&\text{with probability }k/n,\\[1mm]
\Bin(k+1,q),&\text{with probability }(n-k)/n.
\end{cases}
\]
In particular, the chain is irreducible and aperiodic and therefore has a unique stationary law $\nu_n$.
\end{proposition}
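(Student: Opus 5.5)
The plan is to first show that the set-valued process $S_t$ is a Markov chain, and then that it is lumpable with respect to cardinality. The update
\[
S_{t+1}=(S_t\cup\{J_t\})\setminus\{i:L_{i,t}=1\}
\]
uses a drawn type $J_t$ that is uniform on $[n]$, together with loss indicators $L_{i,t}$ that are i.i.d.\ Bernoulli$(p)$. These variables are drawn fresh at each round and are independent of the past. Hence $S_t$ is a Markov chain on subsets of $[n]$.

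To obtain lumpability, I will condition on $S_t$ with $|S_t|=k$ and show that the law of $|S_{t+1}|$ depends only on $k$. There are two cases according to the draw.
\begin{itemize}
\item With probability $k/n$, the draw $J_t$ lies in $S_t$. The pre-loss set then has size $k$.
\item With probability $(n-k)/n$, the draw is a missing type. The pre-loss set then has size $k+1$.
\end{itemize}
Given the pre-loss set of size $m$, each of its elements survives independently with probability $q$, so the post-loss size is $\Bin(m,q)$. Since this conditional law of $K_{t+1}$ is the displayed mixture and depends on $S_t$ only through $k$, the standard lumpability criterion (Dynkin) shows that $K_t$ is itself a Markov chain with the stated kernel. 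More directly, one can condition on $\sigma(S_0,\ldots,S_t)$ and note that the result is a function of $K_t$ alone.

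For irreducibility on $\{0,\ldots,n\}$, I will use the fact that every binomial $\Bin(m,q)$ with $0<q<1$ charges each value in $\{0,\ldots,m\}$. From state $k$, the chain therefore reaches every $j\le k$ in one step. For $k<n$ it also reaches $k+1$ in one step, through the drawing of a new coupon followed by no losses, which has probability $\tfrac{n-k}{n}q^{k+1}>0$. Chaining these upward moves gives a path from any state to $n$, and from $n$ a single step reaches any state.

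For aperiodicity, note that state $0$ has a self-loop. From $0$, the draw is always new, and the result is $\Bin(1,q)=0$ with probability $p>0$. A finite irreducible chain with a self-loop is aperiodic, so the Perron--Frobenius theorem (or the standard finite-chain theory) gives a unique stationary law $\nu_n$, which is moreover the limiting law.

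The only step that needs real care is the lumping step. I have to verify that the conditional law given the full past depends on the past only through $K_t$, not through the identity of the set $S_t$, and this follows because the exchangeable uniform draw and the i.i.d.\ losses treat all sets of the same size alike. Everything else is routine.
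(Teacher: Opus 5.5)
Your proof is correct and follows essentially the same argument as the paper: the same case split on whether the drawn coupon is already held gives the $\Bin(k,q)$/$\Bin(k+1,q)$ mixture, and $P(0,0)=p>0$ gives aperiodicity. The differences are minor. You justify the Markov property of $K_t$ explicitly via strong lumpability, which the paper leaves implicit. You also route irreducibility up to $n$ and then down in one step, where the paper goes down to $0$ and then up.
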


\begin{proof}
If $K_t=k$, the next drawn coupon is already present with probability $k/n$, in which case the pre-loss count remains $k$; after losses the next count is $\Bin(k,q)$.  With probability $(n-k)/n$, the drawn coupon is missing, so the pre-loss count is $k+1$; after losses the next count is $\Bin(k+1,q)$.  This proves the transition rule.

The chain can move from any state to $0$ by losing all held coupons, an event of positive probability.  From $0$ it can reach any $j$ by $j$ successive new draws with no losses.  Since $P(0,0)=p>0$, it is aperiodic.
\end{proof}

\begin{remark}[Set-valued irreducibility]
The set-valued careless chain is also irreducible and aperiodic.  From any set, all currently held coupons may be lost in one step with positive probability.  From the empty set, any prescribed set can be built by drawing its elements in succession and losing no held coupon during those steps.  Finally, $P(\emptyset,\emptyset)>0$.  By symmetry, the image of the set-valued stationary law under $S\mapsto |S|$ is the count-chain stationary law $\nu_n$.
\end{remark}

\begin{proposition}[Stationary-entry flux into completion]\label{prop:careless-flux}
Let
\[
        \mu_n:=\Pbb_{\nu_n}(K_0<n,K_1=n)
\]
be the stationary-entry flux into the post-loss completion state.  Then
\[
        \mu_n=\nu_n(n)(1-q^n).
\]
Equivalently,
\[
        \mu_n=\nu_n(n-1)\frac{q^n}{n}.
\]
\end{proposition}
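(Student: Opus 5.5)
The plan is to compute $\mu_n$ in two ways, exactly as in the proof of Proposition~\ref{prop:clumsy-flux}. For the first formula, I will subtract the "stay" probability from the stationary mass of the target. Stationarity of $\nu_n$ gives $\Pbb_{\nu_n}(K_1=n)=\nu_n(n)$, so
\[
        \mu_n=\nu_n(n)-\Pbb_{\nu_n}(K_0=n,K_1=n).
\]
From $K_0=n$, Proposition~\ref{prop:careless-transition} says the drawn coupon is already present with probability $n/n=1$. The next count is then $\Bin(n,q)$, so $P(n,n)=q^n$. Hence $\mu_n=\nu_n(n)-\nu_n(n)q^n=\nu_n(n)(1-q^n)$.

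For the second formula, I will identify which states outside the target can jump to $n$ in one step. By Proposition~\ref{prop:careless-transition}, from level $k$ the post-loss count is at most $k+1$. So $K_1=n$ with $K_0<n$ forces $K_0=n-1$. From $n-1$, reaching $n$ requires two things: the draw must be the unique missing coupon, which has probability $1/n$, and then all $n$ coupons must survive the $\Bin(n,q)$ thinning, which has probability $q^n$. Thus $P(n-1,n)=q^n/n$, and
\[
        \mu_n=\nu_n(n-1)\frac{q^n}{n}.
\]

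The two expressions are equal because both equal $\mu_n$. One can also check this directly from stationarity at the top state. The balance equation $\nu_n(n)=\nu_n(n-1)P(n-1,n)+\nu_n(n)P(n,n)$ holds because no other level reaches $n$, and it rearranges to the same identity.

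There is no real obstacle here. The only step needing care is justifying that $P(k,n)=0$ for $k\le n-2$, and this follows because a single draw adds at most one coupon before the loss step.
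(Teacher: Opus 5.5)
Your proposal is correct and follows essentially the paper's argument: only level $n-1$ can enter $n$, with $P(n-1,n)=q^n/n$ and $P(n,n)=q^n$, and stationarity links the two expressions. The paper derives $\nu_n(n)(1-q^n)$ from the balance equation at $n$, while you subtract the stay probability directly as in the clumsy case; these are the same identity.
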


\begin{proof}
Only state $n-1$ can enter state $n$ from outside in one step.  From $n-1$, one must draw the unique missing coupon and lose none of the $n$ coupons; this has probability $q^n/n$.  Thus
\[
        \mu_n=\nu_n(n-1)\frac{q^n}{n}.
\]
Stationarity at state $n$ gives
\[
        \nu_n(n)=\nu_n(n)P(n,n)+\nu_n(n-1)P(n-1,n).
\]
Here $P(n,n)=q^n$ and $P(n-1,n)=q^n/n$.  Hence
\[
        \nu_n(n)=\nu_n(n)q^n+\nu_n(n-1)\frac{q^n}{n},
\]
which proves the two displayed identities.
\end{proof}

\subsection{The stationary high-tail scale and constant}

Define, for $0\le k\le n$,
\[
        w_{n,k}:=\frac{(n)_k}{n^k}q^{k(k+1)/2},
        \qquad
        (n)_k:=n(n-1)\cdots(n-k+1),
\]
with $w_{n,0}=1$.  In particular,
\[
        R_n:=w_{n,n}
        =\frac{n!}{n^n}q^{n(n+1)/2}.
\]
We call $w_{n,k}$ the $k$-step lucky-climb weight: it is the probability of the strictly increasing path $0\to1\to\cdots\to k$, where each step draws a missing coupon and all coupons present before the post-draw loss step survive.  The first high-tail estimate gives the matching scale uniformly throughout the upper tail.  The second theorem identifies the exact constant at the top level.

The proof has three steps.  First, stationary cut-flux gives a uniform comparison $\nu_n(k)\asymp w_{n,k}$ throughout the upper tail.  Second, at fixed levels the finite count chain converges to the infinite immigration-thinning chain $Y_{t+1}\sim\Bin(Y_t+1,q)$, whose stationary law has tail constant $(q;q)_\infty^{-1}$.  Third, normalized cut-flux propagation shows that the ratio $\nu_n(k)/w_{n,k}$ changes by only $1+o(1)$ from a slowly growing level $m_n$ all the way to $n$.

\begin{theorem}[Uniform careless stationary high-tail scale]\label{thm:careless-high-tail}
For each fixed $p\in(0,1)$ there exist constants $0<c_p<C_p<\infty$ and an integer $M_p<\infty$ such that, for all sufficiently large $n$ and all $M_p\le k\le n$,
\[
        c_p w_{n,k}\le \nu_n(k)\le C_p w_{n,k}.
\]
In particular,
\[
        \nu_n(n)=\Theta_p\!\left(\frac{n!}{n^n}q^{n(n+1)/2}\right).
\]
\end{theorem}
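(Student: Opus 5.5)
The plan is to work entirely with the count chain $K_t$ of Proposition~\ref{prop:careless-transition} and the stationary cut-flux identity across each cut $\{<k\}\,|\,\{\ge k\}$. The key structural fact is that the post-loss count can increase by at most one per step. Hence the only upward crossing of the cut between $k-1$ and $k$ is the lucky step $k-1\to k$: draw a missing coupon (probability $(n-k+1)/n$) and lose none of the $k$ coupons (probability $q^k$). Stationarity then gives, for $1\le k\le n$,
\[
        \nu_n(k-1)\frac{n-k+1}{n}q^k
        =\sum_{j\ge k}\nu_n(j)\,P(j,\{0,\ldots,k-1\}).
\]
The left side is exactly $\nu_n(k-1)\,w_{n,k}/w_{n,k-1}$. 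The whole proof consists of showing that the right side equals $\nu_n(k)(1+\varepsilon_{n,k})$ with $\sum_k\sup_n|\varepsilon_{n,k}|<\infty$ over $k\ge M_p$, and then telescoping.

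The first step is a lower bound on the downward escape probability. From state $j$, remaining at level $\ge j$ requires either drawing a present coupon and losing nothing (probability at most $q^j$), or drawing a new coupon and losing at most one of the $j+1$ held coupons (probability at most $(j+2)q^{j}$). Thus
\[
        P(j,\{0,\ldots,j-1\})\ge 1-(j+3)q^j=:1-\delta_j,
\]
uniformly in $n$. Choose $M_p$ so that $\delta_j\le 1/2$ for $j\ge M_p$. Applying the identity at $k$ and keeping only the $j=k$ term gives
\[
        \nu_n(k)\le 2\nu_n(k-1)\frac{n-k+1}{n}q^k\le 2q^k\nu_n(k-1),
\]
so the stationary law decays super-geometrically above $M_p$. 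It follows that
\[
        \nu_n(\{j\ge k\})\le \nu_n(k)\bigl(1+\OO(q^{k})\bigr).
\]
Bounding $P(j,\cdot)\le1$ for $j>k$ and using $1-\delta_k\le P(k,\cdot)\le 1$ then yields
\[
        \nu_n(k)(1-\delta_k)\le \nu_n(k-1)\frac{w_{n,k}}{w_{n,k-1}}\le \nu_n(k)\bigl(1+\OO(q^k)\bigr).
\]
Both error terms are of order $kq^k$ and are summable.

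The second step is to telescope from $M_p$ to $k$. This gives
\[
        \frac{\nu_n(k)}{w_{n,k}}\asymp\frac{\nu_n(M_p)}{w_{n,M_p}},
\]
with constants $\prod_{j\ge M_p}(1\pm C jq^j)^{\pm1}$ that are independent of $n$ and $k$. Since $M_p$ is fixed, $w_{n,M_p}\to q^{M_p(M_p+1)/2}$, so it remains to show $\nu_n(M_p)=\Theta_p(1)$ as $n\to\infty$. The upper bound is trivial. For the lower bound, I would use that $\nu_n(0)\ge p\cdot\min_k P(k,0)$ fails to be uniform, so instead argue via tightness. The chain is stochastically dominated by the immigration-thinning chain $Y_{t+1}\sim\Bin(Y_t+1,q)$, whose stationary mean is $q/p$. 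Markov's inequality then gives $\nu_n(\{\le L\})\ge 1/2$ for a fixed $L$. From any $j\le L$, the chain reaches $0$ in one step with probability $p^{L+1}$, and then climbs to $M_p$ in $M_p$ lucky steps with probability at least $w_{n,M_p}\ge c>0$. By stationarity, $\nu_n(M_p)\ge \tfrac12 p^{L+1}c$.

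The main obstacle is the first step's control of the downward probability uniformly in $n$ and in $k$ up to $n$, including near the top, where the draw is almost always a present coupon. The escape bound handles this uniformly because it only uses $q^j$ factors, which do not depend on $n$. The only real care needed is to ensure the error products converge independently of $n$, which holds because $\sum_j jq^j<\infty$. The case $k=n$ then gives $\nu_n(n)=\Theta_p(R_n)$.
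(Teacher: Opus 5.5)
Your proposal is correct, but it is organized differently from the paper's proof.

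\textbf{How the paper argues.} It proves the two bounds separately.
\begin{itemize}
\item The upper bound is exactly your one-sided step. Keep only the adjacent term of the cut-flux identity, so that $\nu_n(k+1)\bigl(1-C_1(k+1)q^{k+1}\bigr)\le \nu_n(k)u_k$. Then iterate from $M_p$ using $\nu_n(M_p)\le 1$.
\item The lower bound is a direct path argument. First, $\nu_n(0)\ge \frac34 p^{M_0+1}$ follows from the stationary mean bound. Second, for every $0\le k\le n$, $\nu_n(k)=\Pbb_{\nu_n}(K_k=k)\ge \nu_n(0)\,w_{n,k}$ by following the lucky climb from $0$.
\end{itemize}

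\textbf{How you argue.} You also bound the cut flux from above. The super-geometric decay $\nu_n(j)\le 2q^j\nu_n(j-1)$ for $j\ge M_p$ gives a two-sided one-step estimate
\[
a_{n,k}(1-\delta_k)\le a_{n,k-1}\le a_{n,k}\bigl(1+O(q^k)\bigr),
\qquad a_{n,k}=\nu_n(k)/w_{n,k}.
\]
Telescoping this means you need $\nu_n(M_p)=\Theta_p(1)$ at only one fixed level. You get that from essentially the same tightness-plus-lucky-climb ingredient the paper uses for $\nu_n(0)$.

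Two small points of presentation:
\begin{itemize}
\item The sentence about $\nu_n(0)\ge p\cdot\min_k P(k,0)$ is garbled and should be dropped.
\item The ``stochastic domination'' of $\nu_n$ by the immigration--thinning law needs a one-line justification: $P_n(k,\cdot)\preceq \Bin(k+1,q)$, and this kernel is monotone in $k$. Alternatively, bypass it with the mean inequality $\Ebb[K_{t+1}\mid K_t=k]\le q(k+1)$.
\end{itemize}

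\textbf{What each route buys.} The paper's lower bound is shorter and holds at all levels $0\le k\le n$, not only $k\ge M_p$. Your route costs a little more at this stage but proves more. The uniform-in-$n$ ratio estimate $a_{n,k}/a_{n,k-1}=1+O(kq^k)$ is precisely the content of the paper's later Lemma~\ref{lem:normalized-propagation}. You obtain it directly by controlling the remainder $\sum_{j\ge k+2}\nu_n(j)$ through the super-geometric decay. The paper instead controls that remainder using the a priori comparison $c\le a_{n,k}\le C$ from this theorem. So your argument would let that propagation lemma be absorbed into this proof.
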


\begin{proof}
For $0\le k<n$, the only way to move from $k$ to $k+1$ is to draw a missing coupon and then lose none of the $k+1$ coupons present before losses.  Hence
\[
        u_k:=P(k,k+1)=\frac{n-k}{n}q^{k+1},
\]
and therefore $w_{n,k+1}=w_{n,k}u_k$.

We first record a uniform positive lower bound near zero.  Conditional on $K_t=k$,
\[
        \Ebb[K_{t+1}\mid K_t=k]
        =q\left(k+\frac{n-k}{n}\right)
        =q\left(1+k\left(1-\frac1n\right)\right).
\]
If $m_n=\Ebb_{\nu_n}K$, stationarity gives
\[
        m_n=q\left(1+\left(1-\frac1n\right)m_n\right),
        \qquad
        m_n=\frac{q}{p+q/n}\le\frac{q}{p}.
\]
Choose a fixed integer $M_0=M_0(p)$ with $M_0>4q/p$.  Then
\[
        \nu_n(\{0,1,\ldots,M_0\})\ge\frac34.
\]
From any $j\le M_0$, after the new draw there are at most $M_0+1$ held coupons, and all may be lost with probability at least $p^{M_0+1}$.  Hence stationarity implies
\[
        \nu_n(0)\ge \sum_{j=0}^{M_0}\nu_n(j)P(j,0)
        \ge \frac34p^{M_0+1}=:c_0(p)>0.
\]

Starting from $0$, the strictly increasing path $0\to1\to\cdots\to k$ in exactly $k$ steps has probability
\[
        \prod_{r=0}^{k-1}u_r=w_{n,k}.
\]
Therefore
\[
        \nu_n(k)=\Pbb_{\nu_n}(K_k=k)
        \ge \nu_n(0)w_{n,k}
        \ge c_0(p)w_{n,k}
\]
for every $0\le k\le n$.

For the upper bound, set $A_k=\{0,1,\ldots,k\}$.  Since upward jumps have size at most one, the stationary flux from $A_k$ to $A_k^c$ is exactly $\nu_n(k)u_k$.  By stationarity, this equals the reverse flux:
\[
        \nu_n(k)u_k=\sum_{j=k+1}^n\nu_n(j)P(j,A_k).
\]
Keeping only the $j=k+1$ term gives
\[
        \nu_n(k+1)P(k+1,A_k)\le \nu_n(k)u_k.
\]
Write $\ell=k+1$.  If the drawn coupon is already present, the pre-loss count is $\ell$, and the event $K_{t+1}\ge\ell$ requires all $\ell$ coupons to survive.  If the drawn coupon is missing, the pre-loss count is $\ell+1$, and $K_{t+1}\ge\ell$ requires zero or one loss among these $\ell+1$ coupons.  Therefore
\[
\begin{aligned}
        P(\ell,\{\ell,\ell+1,\ldots,n\})
        &\le \frac{\ell}{n}q^\ell
        +\frac{n-\ell}{n}\left(q^{\ell+1}+(\ell+1)pq^\ell\right)  \\
        &\le C_1(p)\ell q^\ell.
\end{aligned}
\]
Hence
\[
        P(k+1,A_k)\ge 1-C_1(p)(k+1)q^{k+1}.
\]
Choose $M_p\ge M_0$ large enough that $C_1(p)(k+1)q^{k+1}\le1/2$ for all $k\ge M_p$.  Then, for $k\ge M_p$,
\[
        \nu_n(k+1)
        \le \nu_n(k)\frac{u_k}{1-C_1(p)(k+1)q^{k+1}}.
\]
Iterating from $M_p$ to $k-1$ gives
\[
        \nu_n(k)
        \le \nu_n(M_p)\prod_{r=M_p}^{k-1}u_r
        \prod_{r=M_p}^{k-1}\frac{1}{1-C_1(p)(r+1)q^{r+1}}.
\]
The infinite product is finite because $\sum_r rq^r<\infty$.  Also $\nu_n(M_p)\le1$, and $\prod_{r=0}^{M_p-1}u_r$ is bounded below by a positive constant depending only on $p$ for all large $n$.  Therefore $\nu_n(k)\le C_2(p)w_{n,k}$ uniformly for $M_p\le k\le n$.
\end{proof}

Write
\[
        (q;q)_m:=\prod_{r=1}^m(1-q^r),
        \qquad
        (q;q)_\infty:=\prod_{r=1}^{\infty}(1-q^r).
\]
The infinite product is positive because $\sum_r q^r<\infty$.

\begin{lemma}[The limiting infinite count chain]\label{lem:infinite-chain}
Let $Y_t$ be the Markov chain on $\Nbb_0$ defined by
\[
        Y_{t+1}\mid Y_t=k\sim \Bin(k+1,q).
\]
It has a unique stationary law $\pi$.  This law is the distribution of
\[
        Y_\infty=\sum_{r=1}^{\infty}B_r,
        \qquad
        B_r\sim\mathrm{Bernoulli}(q^r)
\]
with the $B_r$ independent.  Moreover,
\[
        \pi(k)\sim \frac{q^{k(k+1)/2}}{(q;q)_\infty}.
\]
\end{lemma}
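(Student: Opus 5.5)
The plan is to prove stationarity of the series representation directly, using a time-indexed construction. Run the chain from a fixed start and track every coupon immigrant by its age. At each step one new particle arrives, and every existing particle survives independently with probability \(q\). Written as a recursion, \(Y_{t+1}=\sum_{i=1}^{Y_t+1}\xi_{t,i}\) with \(\xi_{t,i}\) i.i.d.\ Bernoulli\((q)\), which is exactly \(\Bin(Y_t+1,q)\).

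Started from \(Y_0=0\), each particle is tracked separately. The particle that immigrated \(r\) steps before time \(t\) has been exposed to exactly \(r\) thinnings, so it is alive at time \(t\) with probability \(q^r\), independently of the other particles. Hence
\[
Y_t\stackrel d=\sum_{r=1}^{t}B_r,\qquad B_r\sim\mathrm{Bernoulli}(q^r)\text{ independent},
\]
and this converges almost surely, hence in law, to \(Y_\infty\), since \(\sum_r q^r<\infty\).

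Stationarity of \(Y_\infty\) is checked directly from the same construction. Add one immigrant to \(Y_\infty\) and thin everything. The old particle of age \(r\) becomes a particle of age \(r+1\) and survives with probability \(q^{r+1}\). The new immigrant survives with probability \(q\), so it plays the role of \(B_1\). The result is again \(\sum_{r\ge1}B_r\) in law.

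Uniqueness follows by coupling. Two copies driven by the same survival coins coalesce once every particle present at time \(0\) has died. Given \(Y_0=k\), the probability that some original particle survives to time \(t\) is at most \(kq^t\to0\), so \(P^t(k,\cdot)\to\pi\) for every \(k\). Any stationary law \(\pi'\) then satisfies \(\pi'=\pi' P^t\to\pi\) by dominated convergence, so \(\pi'=\pi\).

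For the tail, split according to which Bernoullis equal one:
\[
\pi(k)=\sum_{\substack{S\subset\Nbb_{\ge1}\\ |S|=k}}\ \prod_{r\in S}q^r\prod_{r\notin S}(1-q^r)
=(q;q)_\infty\sum_{|S|=k}\prod_{r\in S}\frac{q^r}{1-q^r}.
\]
The dominant set is \(S=\{1,\dots,k\}\), which contributes \((q;q)_\infty q^{k(k+1)/2}/(q;q)_k\), and this tends to \(q^{k(k+1)/2}\) after using \((q;q)_k\to(q;q)_\infty\).

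Any other \(k\)-set corresponds to a partition \(\lambda\) with at most \(k\) parts via \(\sum_{r\in S}r=k(k+1)/2+|\lambda|\). Bounding each factor \(1/(1-q^r)\le1/(q;q)_\infty\)-style uniformly is crude, so the better route is the exact identity
\[
\sum_{|S|=k}\prod_{r\in S}\frac{q^r}{1-q^r}\;=\;\text{coefficient extraction from }\prod_{r\ge1}\Bigl(1+z\frac{q^r}{1-q^r}\Bigr)=\prod_{r\ge1}\frac{1-(1-z)q^r}{1-q^r}.
\]
This generating-function step is the main technical point. Using the \(q\)-binomial/Euler identity \(\prod_r(1+xq^r)=\sum_k x^kq^{k(k+1)/2}/(q;q)_k\) with \(x=-(1-z)\), the coefficient of \(z^k\) equals
\[
\frac{1}{(q;q)_\infty}\sum_{j\ge k}\binom jk(-1)^{j-k}\frac{q^{j(j+1)/2}}{(q;q)_j}.
\]
Multiplying back by \((q;q)_\infty\) gives
\[
\pi(k)=\sum_{j\ge k}\binom jk(-1)^{j-k}\frac{q^{j(j+1)/2}}{(q;q)_j}.
\]

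In this sum the \(j=k\) term is \(q^{k(k+1)/2}/(q;q)_k\). The terms with \(j>k\) are bounded by \(\binom jk q^{j(j+1)/2}/(q;q)_\infty\). Their ratio to the leading term is
\[
\sum_{i\ge1}\binom{k+i}{k}q^{ik+i(i+1)/2}=O(kq^k)\to0.
\]
Hence \(\pi(k)\sim q^{k(k+1)/2}/(q;q)_\infty\), which is the stated asymptotic.
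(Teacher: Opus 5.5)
Your proof is correct. For existence and for the tail you follow the paper's argument. The paper checks invariance by the same ``add one immigrant, thin everything'' substitution. It reaches the same exact formula
\[
\pi(k)=\sum_{m\ge k}(-1)^{m-k}\binom mk\frac{q^{m(m+1)/2}}{(q;q)_m}
\]
by applying the $q$-binomial theorem to $\prod_r(1+(z-1)q^r)$. It then bounds the $m>k$ terms, after dividing by $q^{k(k+1)/2}$, by $(q;q)_\infty^{-1}\bigl((1-q^k)^{-k-1}-1\bigr)$, which is the closed form behind your $O(kq^k)$. Your ratio omits the bounded factor $(q;q)_k/(q;q)_\infty$, which is harmless, and your subset expansion is only a detour to the same generating function $\prod_r(1-q^r+q^rz)$.

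The real difference is uniqueness. The paper argues abstractly: an irreducible chain with a stationary distribution is positive recurrent \cite[Theorem~21.13]{LevinPeres} and hence has only one. You instead use the particle representation to get $\|P^t(k,\cdot)-P^t(0,\cdot)\|_{\TV}\le kq^t$ together with $P^t(0,\cdot)\to\pi$, and conclude by dominated convergence. Your route is self-contained and gives more: convergence to $\pi$ from every initial state, with an explicit rate for forgetting the initial condition. The paper's route is shorter at the price of a citation.

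One side remark in your write-up is false, though nothing downstream uses it: $S=\{1,\dots,k\}$ is not dominant. Its contribution to $\pi(k)$ is $\sim q^{k(k+1)/2}$, only a fraction $(q;q)_\infty$ of $\pi(k)$. The partition-shifted sets carry the remaining fraction $1-(q;q)_\infty$. So a ``leading set plus negligible rest'' argument on the subset sum would have failed; switching to the exact identity was necessary, not merely cleaner.
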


\begin{proof}
The sum $\sum_{r\ge1}B_r$ is finite almost surely because $\sum_rq^r<\infty$.  Its generating function is
\[
        G(z)=\prod_{r=1}^{\infty}(1-q^r+q^rz).
\]
Suppose $Y_t\stackrel d=\sum_{r\ge1}B_r$.  To form $Y_{t+1}$, add one particle and retain each of the $Y_t+1$ particles independently with probability $q$.  Thus, with independent Bernoulli$(q)$ variables $C_0,C_1,\ldots$,
\[
        Y_{t+1}=C_0+\sum_{r\ge1}B_rC_r.
\]
Now $C_0\sim\mathrm{Bernoulli}(q)$, while $B_rC_r\sim\mathrm{Bernoulli}(q^{r+1})$, and these variables are independent.  Hence this law is stationary.

The displayed law is a stationary probability distribution by direct substitution.  Since the chain is irreducible, positive recurrence follows from the countable-state stationary-distribution criterion \cite[Theorem~21.13]{LevinPeres}; uniqueness of the stationary distribution then follows from irreducibility.

By the $q$-binomial theorem \cite[Chapter~2]{AndrewsPartitions},
\[
        \prod_{r=1}^{\infty}(1+wq^r)
        =\sum_{m=0}^{\infty}\frac{q^{m(m+1)/2}}{(q;q)_m}w^m.
\]
Taking $w=z-1$ and extracting coefficients gives
\[
        \pi(k)=\sum_{m=k}^{\infty}(-1)^{m-k}\binom{m}{k}
        \frac{q^{m(m+1)/2}}{(q;q)_m}.
\]
Set $m=k+\ell$.  Then
\[
\frac{\pi(k)}{q^{k(k+1)/2}}
        =\sum_{\ell=0}^{\infty}(-1)^\ell\binom{k+\ell}{\ell}
        \frac{q^{\ell k+\ell(\ell+1)/2}}{(q;q)_{k+\ell}}.
\]
The $\ell=0$ term is $(q;q)_k^{-1}\to(q;q)_\infty^{-1}$.  The absolute value of the remaining terms is at most
\[
        \frac1{(q;q)_\infty}
        \sum_{\ell=1}^{\infty}\binom{k+\ell}{\ell}q^{\ell k}
        =\frac{(1-q^k)^{-k-1}-1}{(q;q)_\infty},
\]
which tends to zero because $(k+1)q^k\to0$.  This proves the high-tail asymptotic.
\end{proof}

\begin{lemma}[Finite-to-infinite stationary convergence]\label{lem:careless-local-convergence}
For every fixed $M<\infty$,
\[
        (\nu_n(0),\ldots,\nu_n(M))\longrightarrow(\pi(0),\ldots,\pi(M)).
\]
Consequently, there exists a deterministic sequence $m_n\to\infty$ with $m_n=o(\sqrt n)$ such that
\[
        \frac{\nu_n(m_n)}{\pi(m_n)}\to1.
\]
\end{lemma}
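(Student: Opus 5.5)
The plan has two parts: a coupling of the finite count chain $K$ with the infinite chain $Y$ that yields local convergence at each fixed $M$, and then a diagonal argument that produces $m_n$.

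\textbf{Coupling.} Both chains are run from the same state $k$ with shared randomness. With probability $(n-k)/n$ the finite chain performs the same move as $Y$, namely $\Bin(k+1,q)$. With probability $k/n$ it instead performs $\Bin(k,q)$. Using the same thinning Bernoullis for the first $k$ particles, the one-step disagreement probability from state $k$ is therefore at most $k/n$.

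\textbf{Fixed $M$.} The approach is a truncated stationarity argument with tightness.
\begin{enumerate}[label=(\roman*)]
\item Uniform tightness of $\nu_n$ follows from the mean bound $\Ebb_{\nu_n}K\le q/p$ established in the proof of Theorem~\ref{thm:careless-high-tail}.
\item Consequently every subsequence of $(\nu_n)$ has a further subsequence converging weakly to some probability law $\nu$ on $\Nbb_0$.
\item Stationarity gives $\nu_n(j)=\sum_k\nu_n(k)P_n(k,j)$. For fixed $k,j$, we have $P_n(k,j)\to P_Y(k,j)$. Tightness lets us pass to the limit in the sum, truncating at a large $K_0$ and controlling the tail uniformly by Markov's inequality. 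This shows $\nu=\nu P_Y$.
\item By the uniqueness part of Lemma~\ref{lem:infinite-chain}, $\nu=\pi$.
\end{enumerate}
Since every subsequential limit equals $\pi$, the full sequence converges coordinatewise on $\{0,\ldots,M\}$.

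\textbf{Choosing $m_n$.} The fixed-$M$ statement gives convergence of $\nu_n(k)/\pi(k)$ for each fixed $k$, because $\pi(k)>0$. A standard diagonal argument then applies. Let $\varepsilon_n(M):=\max_{k\le M}|\nu_n(k)/\pi(k)-1|$, which tends to $0$ for each fixed $M$. Choose $N_1<N_2<\cdots$ such that $\varepsilon_n(M)\le 1/M$ for all $n\ge N_M$, and in addition $N_M\ge M^4$. Define $m_n:=\max\{M:N_M\le n\}$. Then $m_n\to\infty$, $m_n\le n^{1/4}=o(\sqrt n)$, and $|\nu_n(m_n)/\pi(m_n)-1|\le 1/m_n\to0$.

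\textbf{Main obstacle.} The only delicate step is the exchange of limit and sum in (iii). It is handled by the uniform first-moment bound: $\sum_{k>K_0}\nu_n(k)\le (q/p)/K_0$, uniformly in $n$. Note that the diagonal argument gives only a slowly growing $m_n$. That is all the statement requires, because uniform control up to $n$ is handled afterwards by the normalized cut-flux propagation described before Theorem~\ref{thm:careless-high-tail}.
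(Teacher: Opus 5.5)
Your argument is correct and follows the paper's proof: tightness from the stationary mean bound, extraction of subsequential limits, passage to the limit in the stationarity equation, identification of the limit through the uniqueness part of Lemma~\ref{lem:infinite-chain}, and the same diagonal choice of $m_n\le n^{1/4}$. The differences are minor. You control the truncated tail using $P_n(k,j)\le 1$ together with the first-moment bound, whereas the paper uses the kernel decay estimate $P_n(k,j)\le C_j(k+1)^jp^{k-j}$; either works. Your opening coupling paragraph is never used in the fixed-$M$ argument and can be dropped.
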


\begin{proof}
The stationary mean calculation above gives $\sup_n\Ebb_{\nu_n}K\le q/p$, so the laws $\nu_n$ are tight.  For fixed $k,j$,
\[
        P_n(k,j)=\frac{k}{n}\Pbb(\Bin(k,q)=j)
        +\frac{n-k}{n}\Pbb(\Bin(k+1,q)=j)\longrightarrow \Pbb(\Bin(k+1,q)=j).
\]
Let a subsequence of $\nu_n$ converge weakly to a law $\nu$.  For fixed $j$, stationarity gives
\[
        \nu_n(j)=\sum_{k=0}^{n}\nu_n(k)P_n(k,j).
\]
We justify passage to the limit in this equation.  For fixed $j$ and all sufficiently large $k$,
\[
\begin{aligned}
        P_n(k,j)
        &\le \Pbb(\Bin(k,q)=j)+\Pbb(\Bin(k+1,q)=j)  \\
        &\le C_j(k+1)^j p^{k-j},
\end{aligned}
\]
with $C_j<\infty$ independent of $n$ and $k$.  Hence, for every $K>j$,
\[
        \sup_n\sum_{k>K}\nu_n(k)P_n(k,j)
        \le C_j\sup_{k>K}(k+1)^jp^{k-j}\longrightarrow0.
\]
The same bound applies to the limiting kernel
$P(k,j)=\Pbb(\Bin(k+1,q)=j)$, so
\[
        \sum_{k>K}\nu(k)P(k,j)\longrightarrow0.
\]
For fixed $K$, weak convergence and pointwise kernel convergence give
\[
        \sum_{k=0}^{K}\nu_n(k)P_n(k,j)\to
        \sum_{k=0}^{K}\nu(k)P(k,j).
\]
Letting first $n\to\infty$ along the subsequence and then $K\to\infty$ yields
\[
        \nu(j)=\sum_{k=0}^{\infty}\nu(k)P(k,j),
\]
so $\nu$ is stationary for the infinite chain.  By Lemma~\ref{lem:infinite-chain}, $\nu=\pi$.  Therefore every weak subsequential limit is $\pi$, and the full sequence converges locally.

For the slowly growing window, set
\[
        \Delta_n(M):=\max_{0\le j\le M}\left|\frac{\nu_n(j)}{\pi(j)}-1\right|.
\]
For each fixed $M$, local convergence and the positivity of $\pi(j)$ on the finite set $\{0,\ldots,M\}$ imply $\Delta_n(M)\to0$.  Choose integers $N_M$ such that $n\ge N_M$ implies $\Delta_n(M)\le1/M$.  Increasing them if necessary, assume that $N_M$ is nondecreasing.  Define, for all large $n$,
\[
        m_n:=\max\{M\le n^{1/4}: n\ge N_M\}.
\]
Since each $N_M$ is finite, $m_n\to\infty$; by construction $m_n=o(\sqrt n)$ and $\Delta_n(m_n)\le1/m_n\to0$.  Thus $\nu_n(m_n)/\pi(m_n)\to1$.
\end{proof}

\begin{lemma}[Intermediate normalized tail]\label{lem:intermediate-tail}
With the sequence $m_n$ from Lemma~\ref{lem:careless-local-convergence},
\[
        \frac{\nu_n(m_n)}{w_{n,m_n}}\to \frac1{(q;q)_\infty}.
\]
\end{lemma}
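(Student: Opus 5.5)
We factor the ratio as
\[
        \frac{\nu_n(m_n)}{w_{n,m_n}}
        =\frac{\nu_n(m_n)}{\pi(m_n)}\cdot\frac{\pi(m_n)}{q^{m_n(m_n+1)/2}}\cdot\frac{q^{m_n(m_n+1)/2}}{w_{n,m_n}},
\]
and show each factor has the expected limit. The first factor tends to $1$ by the choice of $m_n$ in Lemma~\ref{lem:careless-local-convergence}. The second tends to $(q;q)_\infty^{-1}$ by the high-tail asymptotic of Lemma~\ref{lem:infinite-chain}, applied along $k=m_n\to\infty$. That asymptotic is a statement as $k\to\infty$ along the integers, so it holds along any sequence tending to infinity.

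The third factor equals $n^{m_n}/(n)_{m_n}$, by the definition $w_{n,k}=\frac{(n)_k}{n^k}q^{k(k+1)/2}$. We bound it by
\[
        \frac{(n)_{m}}{n^{m}}=\prod_{j=0}^{m-1}\Bigl(1-\frac jn\Bigr)\ge 1-\sum_{j<m}\frac jn\ge 1-\frac{m^2}{2n}.
\]
Since $m_n=o(\sqrt n)$, which is exactly why Lemma~\ref{lem:careless-local-convergence} built $m_n\le n^{1/4}$, this gives $(n)_{m_n}/n^{m_n}\to1$. Multiplying the three limits gives $(q;q)_\infty^{-1}$.

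There is no substantial obstacle: all the analytic content sits in the two earlier lemmas. The only point to watch is that the local convergence in Lemma~\ref{lem:careless-local-convergence} was upgraded to a \emph{ratio} statement at a growing level $m_n$ and not merely to an absolute-difference statement. This is what makes the first factor tend to $1$ even though $\pi(m_n)\to0$ superexponentially, and it is supplied by the construction via $\Delta_n(M)$.
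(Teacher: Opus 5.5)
Your proof is correct and follows the paper's own argument: you factor through $\pi(m_n)$ and $q^{m_n(m_n+1)/2}$, use the ratio convergence $\nu_n(m_n)/\pi(m_n)\to1$ from Lemma~\ref{lem:careless-local-convergence}, and use the tail asymptotic from Lemma~\ref{lem:infinite-chain}. You then finish with $(n)_{m_n}/n^{m_n}\to1$ from $m_n=o(\sqrt n)$; your bound $\prod_{j<m}(1-j/n)\ge 1-m^2/(2n)$ simply makes explicit a step the paper states without detail.
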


\begin{proof}
Since $m_n=o(\sqrt n)$,
\[
        \frac{(n)_{m_n}}{n^{m_n}}
        =\prod_{r=0}^{m_n-1}\left(1-\frac r n\right)\to1.
\]
Lemma~\ref{lem:infinite-chain} gives
\[
        \pi(m_n)\sim \frac{q^{m_n(m_n+1)/2}}{(q;q)_\infty}.
\]
Together with $\nu_n(m_n)/\pi(m_n)\to1$, this proves the claim.
\end{proof}

\begin{lemma}[Normalized cut-flux propagation]\label{lem:normalized-propagation}
Let
\[
        a_{n,k}:=\frac{\nu_n(k)}{w_{n,k}}.
\]
For every integer sequence $m_n$ satisfying $m_n\to\infty$ and $m_n\le n$ eventually,
\[
        \sup_{m_n\le k\le n}\left|\log\frac{a_{n,k}}{a_{n,m_n}}\right|\to0.
\]
Consequently $a_{n,n}=a_{n,m_n}(1+o(1))$.
\end{lemma}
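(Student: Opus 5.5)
The plan is to turn the exact stationary cut-flux identity from the proof of Theorem~\ref{thm:careless-high-tail} into a two-sided one-step comparison of consecutive normalized masses $a_{n,k}$ and $a_{n,k+1}$, with a relative error that is summable in $k$ uniformly in $n$. Summing these errors from $m_n$ upward gives the claim.

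Fix $k$ with $M_p\le k<n$, where $M_p$ is as in Theorem~\ref{thm:careless-high-tail}. This range eventually contains $[m_n,n)$, since $m_n\to\infty$. Put $A_k=\{0,\ldots,k\}$ and $u_k=P(k,k+1)$. Upward jumps have size at most one, so stationarity gives
\[
        \nu_n(k)u_k=\sum_{j=k+1}^n\nu_n(j)P(j,A_k).
\]
Since $w_{n,k+1}=w_{n,k}u_k$, dividing by $w_{n,k+1}$ yields
\[
        a_{n,k}=\sum_{j=k+1}^n a_{n,j}\frac{w_{n,j}}{w_{n,k+1}}P(j,A_k).
\]

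For the lower bound on $a_{n,k}$, I keep only the $j=k+1$ term. I then use the estimate from the proof of Theorem~\ref{thm:careless-high-tail},
\[
        P(k+1,A_k)\ge 1-C_1(p)(k+1)q^{k+1}.
\]
This gives $a_{n,k+1}\le a_{n,k}\bigl(1-C_1(k+1)q^{k+1}\bigr)^{-1}$.

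For the upper bound, I bound $P(j,A_k)\le1$ in every term. For $j\ge k+2$,
\[
        \frac{w_{n,j}}{w_{n,k+1}}=\prod_{r=k+1}^{j-1}u_r\le\prod_{r=k+1}^{j-1}q^{r+1}\le q^{(k+2)(j-k-1)}.
\]
Hence $\sum_{j\ge k+2}w_{n,j}/w_{n,k+1}\le 2q^{k+2}$ once $q^{k+2}\le1/2$. Theorem~\ref{thm:careless-high-tail} gives $a_{n,j}\le C_p$ for $j\ge M_p$ and $a_{n,k+1}\ge c_p$. Therefore
\[
        a_{n,k}\le a_{n,k+1}+2C_pq^{k+2}\le a_{n,k+1}\Bigl(1+\tfrac{2C_p}{c_p}q^{k+2}\Bigr).
\]

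Together the two bounds give $\bigl|\log(a_{n,k+1}/a_{n,k})\bigr|\le C_3(p)(k+1)q^{k}$ for all $M_p'\le k<n$. Here $M_p'$ is a slightly enlarged constant that makes the factors $1-C_1(k+1)q^{k+1}$ bounded away from zero, so the logarithms are controlled linearly. Telescoping then gives
\[
        \sup_{m_n\le k\le n}\left|\log\frac{a_{n,k}}{a_{n,m_n}}\right|\le C_3\sum_{k\ge m_n}(k+1)q^{k},
\]
which tends to zero because the series converges and $m_n\to\infty$. The consequence $a_{n,n}=a_{n,m_n}(1+o(1))$ is the case $k=n$.

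The main obstacle is the upper side. A naive comparison would let the mass above $k+1$ contribute at order one relative to $\nu_n(k+1)$. The key point is that the uniform two-sided bound $a_{n,j}\asymp1$ from Theorem~\ref{thm:careless-high-tail} converts this contribution into the geometrically small factor $q^{k+2}$. That bound must hold uniformly in $n$ all the way up to $j=n$, and the theorem supplies exactly this uniformity.
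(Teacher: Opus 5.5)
Your proposal is correct and follows essentially the same route as the paper. You divide the stationary cut-flux identity by $w_{n,k+1}$, use $1-P(k+1,A_k)=O(kq^k)$, and use the uniform comparison $a_{n,j}\asymp 1$ from Theorem~\ref{thm:careless-high-tail} to make the contribution of levels $j\ge k+2$ of order $q^{k}$; you then telescope the summable one-step log-ratio bounds from $m_n$ upward. The only difference is cosmetic: you state two one-sided inequalities, whereas the paper writes the exact ratio $a_{n,k+1}/a_{n,k}=(1-\rho_{n,k})/d_k$.
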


\begin{proof}
By Theorem~\ref{thm:careless-high-tail}, there are constants $0<c<C<\infty$ and $M<\infty$ such that
\[
        c\le a_{n,k}\le C,
        \qquad M\le k\le n.
\]
For $k\ge M$, the stationary cut-flux identity gives
\[
        \nu_n(k)u_k=\nu_n(k+1)d_k+R_k,
\]
where
\[
        d_k:=P(k+1,\{0,1,\ldots,k\}),
        \qquad
        R_k:=\sum_{j=k+2}^n\nu_n(j)P(j,\{0,1,\ldots,k\}).
\]
Since $w_{n,k+1}=w_{n,k}u_k$,
\[
        a_{n,k}=d_ka_{n,k+1}+\frac{R_k}{w_{n,k+1}}.
\]
As above,
\[
        1-d_k\le C_pkq^k.
\]
Also, using $P(j,A_k)\le1$ and $\nu_n(j)\le Cw_{n,j}$,
\[
        R_k\le C\sum_{j=k+2}^n w_{n,j}.
\]
For $j\ge k+2$,
\[
        \frac{w_{n,j}}{w_{n,k+1}}
        =\prod_{r=k+1}^{j-1}u_r
        \le q^{(j-k-1)(k+2)}.
\]
Consequently,
\[
        \frac{R_k}{w_{n,k+1}}
        \le C\sum_{h=1}^{\infty}q^{h(k+2)}
        =C\frac{q^{k+2}}{1-q^{k+2}}
        =\OO_p(q^k).
\]
Since $a_{n,k}\ge c>0$,
\[
        \rho_{n,k}:=\frac{R_k}{a_{n,k}w_{n,k+1}}=\OO_p(q^k).
\]
For all sufficiently large $k$, both $\rho_{n,k}$ and $1-d_k$ are less than $1/2$.  Therefore
\[
        \frac{a_{n,k+1}}{a_{n,k}}
        =\frac{1-\rho_{n,k}}{d_k}=1+O_p(kq^k)
\]
uniformly for $M\le k\le n-1$.  Since $m_n\to\infty$, for all large $n$ we have $m_n\ge M$.  Thus, for every endpoint $k\in[m_n,n]$, summing logarithms gives
\[
        \left|\log\frac{a_{n,k}}{a_{n,m_n}}\right|
        \le C\sum_{r\ge m_n}rq^r=o(1),
\]
uniformly in $k$.
\end{proof}

\begin{theorem}[Sharp careless stationary high-tail constant]\label{thm:careless-sharp-constant}
For fixed $p\in(0,1)$ and $q=1-p$,
\[
        \nu_n(n)\sim \frac1{(q;q)_\infty}\frac{n!}{n^n}q^{n(n+1)/2}.
\]
Consequently,
\[
        \mu_n=\nu_n(n)(1-q^n)
        \sim \frac1{(q;q)_\infty}\frac{n!}{n^n}q^{n(n+1)/2}.
\]
\end{theorem}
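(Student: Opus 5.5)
The plan is to chain the three preceding lemmas along the normalized ratio $a_{n,k}=\nu_n(k)/w_{n,k}$, and then convert the stationary mass into the flux using Proposition~\ref{prop:careless-flux}.

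First, take the deterministic sequence $m_n\to\infty$ with $m_n=o(\sqrt n)$ given by Lemma~\ref{lem:careless-local-convergence}. Lemma~\ref{lem:intermediate-tail} then gives
\[
a_{n,m_n}=\frac{\nu_n(m_n)}{w_{n,m_n}}\to\frac1{(q;q)_\infty}.
\]
This step matches the finite chain to the infinite immigration--thinning chain of Lemma~\ref{lem:infinite-chain}. It also absorbs the falling-factorial factor $(n)_{m_n}/n^{m_n}\to1$.

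Second, apply Lemma~\ref{lem:normalized-propagation} with the same sequence $m_n$, which satisfies $m_n\to\infty$ and $m_n\le n$ for large $n$. Its consequence $a_{n,n}=a_{n,m_n}(1+o(1))$, combined with the first step, gives $a_{n,n}\to(q;q)_\infty^{-1}$. Since $w_{n,n}=R_n=\frac{n!}{n^n}q^{n(n+1)/2}$, this is exactly
\[
\nu_n(n)\sim\frac{1}{(q;q)_\infty}\frac{n!}{n^n}q^{n(n+1)/2}.
\]

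Third, Proposition~\ref{prop:careless-flux} gives $\mu_n=\nu_n(n)(1-q^n)$. Because $q\in(0,1)$ is fixed, $1-q^n\to1$, so the flux has the same asymptotic as $\nu_n(n)$.

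There is no real obstacle, since all the analytic work is in the lemmas. The only point to check is compatibility: the propagation lemma must hold for the particular $m_n$ produced by the local-convergence lemma, not just for some sequence. It does, because Lemma~\ref{lem:normalized-propagation} is stated for every sequence tending to infinity; its uniform bound $C\sum_{r\ge m_n}rq^r=o(1)$ needs only $m_n\to\infty$.
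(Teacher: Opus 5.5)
Your proposal is correct and matches the paper's proof step for step. You take $m_n$ from Lemma~\ref{lem:careless-local-convergence}, get $a_{n,m_n}\to(q;q)_\infty^{-1}$ from Lemma~\ref{lem:intermediate-tail}, and transport this to $k=n$ via Lemma~\ref{lem:normalized-propagation} with $w_{n,n}=R_n$. You then finish with Proposition~\ref{prop:careless-flux} and $1-q^n\to1$; your remark that the propagation lemma applies to any $m_n\to\infty$ (here $m_n\le n^{1/4}$) is the right compatibility check.
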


\begin{proof}
Let $m_n$ be the sequence from Lemma~\ref{lem:careless-local-convergence}.  Lemma~\ref{lem:intermediate-tail} gives
\[
        a_{n,m_n}=\frac{\nu_n(m_n)}{w_{n,m_n}}\to\frac1{(q;q)_\infty}.
\]
Lemma~\ref{lem:normalized-propagation} gives $a_{n,n}=a_{n,m_n}(1+o(1))$.  Since $w_{n,n}=n!n^{-n}q^{n(n+1)/2}$, the first claim follows.  The flux identity follows from Proposition~\ref{prop:careless-flux} and $1-q^n\to1$.
\end{proof}

\subsection{Clump control and mixing}

\begin{lemma}[Careless one-block clump control]\label{lem:careless-clump}
Let $b_n\le n^M$ for some fixed $M<\infty$.  Let
\[
        E_t:=\1\{K_{t-1}<n,\ K_t=n\},
        \qquad
        N_b:=\sum_{t=1}^{b_n}E_t.
\]
Then
\[
        \Pbb_{\nu_n}(N_b\ge1)=b_n\mu_n(1+o(1)),
\]
where $\mu_n=\Pbb_{\nu_n}(E_1=1)$.
\end{lemma}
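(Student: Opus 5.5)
The plan mirrors the clumsy clump-control Lemma~\ref{lem:clumsy-clump}. First, stationarity gives the exact first moment $\Ebb_{\nu_n}N_b=b_n\mu_n$. By the factorial-moment remark after Theorem~\ref{thm:stationary-entry}, it then suffices to show $\Ebb_{\nu_n}[(N_b)_2]=o(b_n\mu_n)$. Conditional on $E_s=1$, the chain sits at $K_s=n$. The Markov property and stationarity give
\[
        \Ebb_{\nu_n}[(N_b)_2]\le 2b_n\mu_n\,\Ebb_n\sum_{u=1}^{b_n}E_u,
\]
so everything reduces to proving that $\Ebb_n\sum_{u=1}^{b_n}E_u=o(1)$, i.e.\ that started from the full state, the expected number of new entries into $n$ within a polynomial block is small.

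To bound this quantity, I would decompose the path from $n$. The chain leaves $n$ with probability $1-q^n\to1$, and from $n$ it typically crashes, since the next count is $\Bin(n,q)\approx qn$. It cannot re-enter $n$ without climbing back through every level. Each entry at time $u$ requires $K_{u-1}=n-1$ followed by a step of probability $q^n/n$. Summing over $u$ therefore gives
\[
        \Ebb_n\sum_{u=1}^{b_n}E_u\le \frac{q^n}{n}\sum_{u=0}^{b_n-1}\Pbb_n(K_u=n-1).
\]
It then suffices to show $\Pbb_x(K_u=n-1)\le e^{-\Omega(n)}$ uniformly in the starting state $x$ and in $u\ge1$; multiplying by $b_n q^n\le n^Mq^n$ yields $o(1)$. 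In fact the crude bound $\Pbb(K_u=n-1)\le1$ already suffices, because $b_nq^n/n\to0$ for fixed $p$. The count chain shows this directly: every entry into $n$ at step $u$ has conditional probability at most $q^n/n$ given the past. Summing over the $b_n$ steps therefore gives
\[
        \Ebb_n\sum_{u=1}^{b_n}E_u\le b_n\frac{q^n}{n}\le n^{M-1}q^n=e^{-\Omega(n)}.
\]

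This makes the argument much easier than in the clumsy case, because a post-loss entry into $n$ always costs the survival factor $q^n$, whereas there the re-entry step had probability of order $1/n$. I would still double-check that the quantity required is $o(1)$ in absolute terms and not relative to something smaller. The bound $\Ebb_{\nu_n}[(N_b)_2]\le 2b_n\mu_n\cdot n^{M-1}q^n=o(b_n\mu_n)$ is exactly what the remark needs. There is no real obstacle. The only point requiring care is justifying the conditioning inequality $\Pbb(E_{s+u}=1\mid\mathcal F_{s+u-1})\le q^n/n$ via Proposition~\ref{prop:careless-flux}, which says only level $n-1$ feeds $n$, with transition probability $q^n/n$, and then combining with $0\le\Ebb N_b-\Pbb(N_b\ge1)\le\Ebb[(N_b)_2]$.
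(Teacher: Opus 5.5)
Your proposal is correct and follows essentially the same route as the paper: the exact first moment from stationarity, the Markov-property reduction $\Ebb_{\nu_n}[(N_b)_2]\le 2b_n\mu_n\,\Ebb_n\sum_{u=1}^{b_n}E_u$, and a per-step exponentially small bound summed over a polynomial block. The only cosmetic difference is that you bound the conditional entry probability by $q^n/n$ directly, whereas the paper uses $E_u\le\1\{K_u=n\}$ together with $\Pbb_x(K_u=n)\le q^n$; this costs the paper a harmless extra factor of $n$.
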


\begin{proof}
By stationarity,
\[
        \Ebb_{\nu_n}N_b=b_n\mu_n.
\]
We prove the second factorial moment is negligible.  For any starting state $x\in\{0,\ldots,n\}$ and any $u\ge1$,
\[
        \Pbb_x(K_u=n)\le q^n.
\]
Indeed, condition on $K_{u-1}$.  A one-step transition to $n$ is impossible from states at most $n-2$, has probability $q^n/n$ from $n-1$, and has probability $q^n$ from $n$.

Since $E_u\le \1\{K_u=n\}$,
\[
        \Ebb_n\sum_{u=1}^{b_n}E_u
        \le \sum_{u=1}^{b_n}\Pbb_n(K_u=n)
        \le b_nq^n=o(1),
\]
because $b_n$ is polynomial and $q^n$ is exponentially small.

Now use stationarity and the Markov property.  Conditional on $E_s=1$, the chain is at state $n$ at time $s$.  Therefore
\[
\begin{aligned}
        \Ebb_{\nu_n}[(N_b)_2]
        &=2\sum_{1\le s<t\le b_n}\Pbb_{\nu_n}(E_s=1,E_t=1) \\
        &\le 2\sum_{s=1}^{b_n}\Pbb_{\nu_n}(E_s=1)\Ebb_n\sum_{u=1}^{b_n}E_u \\
        &\le 2b_n\mu_n\, b_nq^n=o(b_n\mu_n).
\end{aligned}
\]
Finally,
\[
        0\le \Ebb_{\nu_n} N_b-\Pbb_{\nu_n}(N_b\ge1)\le \Ebb_{\nu_n}[(N_b)_2],
\]
so
\[
        \Pbb_{\nu_n}(N_b\ge1)=b_n\mu_n(1+o(1)).
\]
\end{proof}

\begin{lemma}[Careless fast mixing and negligible burn-in]\label{lem:careless-mixing}
Let $\widehat\pi_n$ be the stationary law of the set-valued careless chain; its image under $S\mapsto |S|$ is $\nu_n$.  Then
\[
        \sup_{S\subseteq[n]}\|P^t(S,\cdot)-\widehat\pi_n\|_{\TV}\le nq^t.
\]
Moreover, from the empty initial collection, with
\[
        b_n=n^3,
        \qquad
        h_n=r_n=\ceil{Cn^2},
\]
for any fixed $C>1$, all mixing and burn-in assumptions in Theorem~\ref{thm:stationary-entry} hold for the post-loss completion state $A_n=\{S:S=[n]\}$.
\end{lemma}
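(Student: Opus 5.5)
The mixing bound will come from a grand coupling of two copies of the set-valued chain started from arbitrary sets $S,S'$. Both copies use the same drawn type $J_t$ and the same loss indicators $L_{i,t}$ at every step. Under this coupling, coordinate $i$ agrees in the two copies at time $t$ as soon as $i$ has been lost at least once in $\{1,\ldots,t\}$. Indeed, at a step where $L_{i,t}=1$, coupon $i$ is absent in both copies after the loss step, whether or not it was held or drawn. From then on the two copies update coordinate $i$ by identical rules: it becomes present exactly when $J_t=i$ or it was already present, and absent exactly when $L_{i,t}=1$. The evolution of coordinate $i$ depends only on its own past value, $J_t$, and $L_{i,t}$, so agreement persists. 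If instead coordinate $i$ has never been lost by time $t$, this has probability $q^t$. A union bound over the $n$ coordinates then gives
\[
        \Pbb(S_t\ne S'_t)\le nq^t .
\]
Averaging $S'$ over $\widehat\pi_n$ and applying the coupling inequality gives $\sup_S\|P^t(S,\cdot)-\widehat\pi_n\|_{\TV}\le nq^t$. Since the target event $\{S=[n]\}$ depends only on the count, and $\widehat\pi_n$ projects to $\nu_n$, Theorem~\ref{thm:stationary-entry} can be applied either to the set chain or to the count chain. Projection contracts total variation, so the same bound holds for the count chain.

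Next, check the scale conditions with $b_n=n^3$ and $h_n=r_n=\ceil{Cn^2}$. Clearly $h_n=o(b_n)$. By Theorem~\ref{thm:careless-sharp-constant}, $\log\mu_n=-\tfrac{n(n+1)}2|\log q|-n+\OO(\log n)$. Hence $b_n\mu_n\to0$ and $r_n\mu_n\to0$, and both $\pi_n(A_n)=\nu_n(n)$ and $\mu_n$ tend to zero. The main check is $\alpha_n(h_n)=o(b_n\mu_n)$. The mixing bound gives
\[
        \alpha_n(h_n)\le nq^{Cn^2}=\exp\{-Cn^2|\log q|+\log n\},
\]
while $b_n\mu_n=\exp\{-\tfrac12 n^2|\log q|+\OO(n)\}$. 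Any $C>1/2$ therefore suffices, and in particular $C>1$ does. The same estimate gives $\|P^{r_n}(\emptyset,\cdot)-\widehat\pi_n\|_{\TV}\le nq^{r_n}=o(1)$.

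For negligible burn-in from the empty collection, the argument of Lemma~\ref{lem:careless-clump} applies. For every starting state and every $u\ge1$ we have $\Pbb(K_u=n)\le q^n$, because entering or staying at $n$ requires all $n$ coupons to survive the last loss step. A union bound gives
\[
        \Pbb_\emptyset(T_{A_n}\le r_n)\le r_nq^n=\OO(n^2q^n)=o(1).
\]
Time $0$ is not in $A_n$ for $n\ge1$. Lemma~\ref{lem:careless-clump} supplies (M1) with $b_n=n^3$, so all hypotheses hold.

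The only delicate step is the coupling claim that a single loss event synchronizes a coordinate. One must confirm that the draw step cannot desynchronize coordinate $i$ afterwards. This holds because the draw adds the same type in both copies, and adding an already-present coupon does nothing. Everything else is bookkeeping against the $q^{n^2/2}$ scale of $\mu_n$.
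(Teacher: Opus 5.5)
Your proof is correct and follows essentially the same route as the paper: the same grand coupling (shared draw $J_t$ and shared per-type loss bits $L_{i,t}$) bounds each coordinate's disagreement probability by $q^t$, a union bound gives $nq^t$, $nq^{h_n}$ is compared with $b_n\mu_n=\exp\{-\tfrac12 n^2|\log q|+\OO(n)\}$, and burn-in follows from $\Pbb_\emptyset(S_t=[n])\le q^n$ and a union bound over $r_n$ steps. The differences are cosmetic: you synchronize a coordinate using the loss event alone, whereas the paper also counts a draw of that type and gets $((1-1/n)q)^t$; and you correctly note that any $C>1/2$ already suffices.
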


\begin{proof}
Couple two set-valued chains using the same drawn coupon and the same loss indicators for every coupon type.  If a coupon type differs between the two copies at time $t$, then the discrepancy can persist for one more round only if the type is not drawn and the present copy is not lost.  This probability is $(1-1/n)q\le q$.  Therefore a fixed initial discrepancy survives $t$ rounds with probability at most $q^t$, and the union bound over $n$ types gives
\[
        \sup_{S,S'}\Pbb(S_t\ne S'_t\mid S_0=S,S_0'=S')\le nq^t.
\]
The total-variation bound follows by the coupling inequality.

Let $\mu_n$ be the stationary-entry flux.  By Theorem~\ref{thm:careless-sharp-constant},
\[
        \mu_n\sim (q;q)_\infty^{-1}R_n,
        \qquad
        R_n=\frac{n!}{n^n}q^{n(n+1)/2}.
\]
Thus
\[
        \log(b_n\mu_n)=-\frac{\abs{\log q}}{2}n^2+\OO(n)
\]
for $b_n=n^3$.  On the other hand,
\[
        \log(nq^{h_n})=\log n-C\abs{\log q}\,n^2+\OO(1).
\]
Since $C>1$, we have $nq^{h_n}=o(b_n\mu_n)$.  Also $h_n=o(b_n)$ and $b_n\mu_n\to0$.

For burn-in from the empty collection, condition on the pre-loss set in a fixed positive round.  Full completion after the loss step requires all $n$ coupons to survive that loss step, so
\[
        \Pbb_\emptyset(S_t=[n])\le q^n,
        \qquad t\ge1.
\]
Therefore
\[
        \Pbb_\emptyset(T_{A_n}\le r_n)
        \le r_nq^n=o(1).
\]
The same mixing bound gives $\|P^{r_n}(\emptyset,\cdot)-\widehat\pi_n\|_{\TV}=o(1)$, and $r_n\mu_n=o(1)$ because $\mu_n$ is exponentially small in $n^2$.
\end{proof}

\subsection{Fixed-\texorpdfstring{$p$}{p} hitting law and moments}

\begin{theorem}[Fixed-$p$ careless collector]\label{thm:careless-final}
Fix $p\in(0,1)$ and put $q=1-p$.  Let $T_{n,p}$ be the post-loss completion time of the careless coupon collector started from the empty collection, and let
\[
        \mu_n:=\Pbb_{\nu_n}(K_0<n,K_1=n).
\]
Then
\[
        \mu_nT_{n,p}\Rightarrow\Exp(1).
\]
Moreover,
\[
        \mu_n\sim \frac{1}{(q;q)_\infty}
        \frac{n!}{n^n}q^{n(n+1)/2}.
\]
\end{theorem}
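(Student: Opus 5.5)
This is a direct application of Theorem~\ref{thm:stationary-entry} to the set-valued careless chain $S_t$ started from $x_n=\emptyset$, with target set $A_n=\{[n]\}$. The set-valued chain is finite, irreducible and aperiodic by the set-valued irreducibility remark, and its stationary law is $\widehat\pi_n$. The count $K_t=|S_t|$ is a function of $S_t$, and $A_n=\{K=n\}$, so new entries of $S$ into $A_n$ coincide with new entries of $K$ into $\{n\}$. Since $\widehat\pi_n$ projects to $\nu_n$, the set-valued stationary-entry flux equals $\Pbb_{\nu_n}(K_0<n,K_1=n)=\mu_n$. For the same reason, the one-block event $\{N_n^{(b)}\ge1\}$ has the same probability under $\widehat\pi_n$ as under $\nu_n$.

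I would check the hypotheses in order.

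\emph{Rarity.} We have $\widehat\pi_n(A_n)=\nu_n(n)$ and $\mu_n$. By Theorem~\ref{thm:careless-sharp-constant} both are asymptotic to $(q;q)_\infty^{-1}\frac{n!}{n^n}q^{n(n+1)/2}$, so both tend to zero. Alternatively, $\nu_n(n)\le q^n$ already follows from the one-step bound in Lemma~\ref{lem:careless-clump}.

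\emph{Block parameters.} Take $b_n=n^3$ and $h_n=r_n=\ceil{Cn^2}$ with a fixed $C>1$. Then $h_n=o(b_n)$, and $b_n\mu_n\to0$ because $\mu_n$ is exponentially small in $n^2$. Lemma~\ref{lem:careless-mixing} gives $\alpha_n(h_n)\le nq^{h_n}=o(b_n\mu_n)$, where $\alpha_n$ is computed for the set-valued chain.

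\emph{Assumption (M1).} Lemma~\ref{lem:careless-clump} applies with $b_n=n^3$, which is polynomial.

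\emph{Assumption (M2).} Lemma~\ref{lem:careless-mixing} gives the three burn-in facts from $\emptyset$: $\Pbb_\emptyset(T_{A_n}\le r_n)\le r_nq^n=o(1)$, the distance $\|P^{r_n}(\emptyset,\cdot)-\widehat\pi_n\|_{\TV}\le nq^{r_n}=o(1)$, and $r_n\mu_n=o(1)$.

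With these in place, Theorem~\ref{thm:stationary-entry} yields $\mu_nT_{n,p}\Rightarrow\Exp(1)$ under $\Pbb_\emptyset$. The asymptotic for $\mu_n$ is exactly the second display of Theorem~\ref{thm:careless-sharp-constant}.

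The only point needing care is the bookkeeping between the count chain and the set-valued chain. The mixing coefficient must be taken for a genuine Markov chain, and here I would use the set-valued one. The flux, the one-block law and the hitting time are functionals of $K$ alone, so their stationary values may be computed under $\nu_n$. Once this identification is made, all the estimates are already available in the earlier lemmas, and the proof reduces to assembling them.
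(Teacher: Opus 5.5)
Your proposal is correct and matches the paper's proof: apply Theorem~\ref{thm:stationary-entry} to the set-valued chain started at $\emptyset$, with the flux asymptotic taken from Theorem~\ref{thm:careless-sharp-constant}. Hypothesis (M1) comes from Lemma~\ref{lem:careless-clump}, since entries depend only on the count coordinate, and mixing and burn-in come from Lemma~\ref{lem:careless-mixing}. Your explicit check that $\nu_n(n)\to0$ and $\mu_n\to0$ (for instance via $\nu_n(n)\le q^n$) fills in the rarity hypotheses, which the paper leaves implicit.
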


\begin{proof}
The target set is $A_n=\{S:S=[n]\}$, equivalently $K=n$.  Theorem~\ref{thm:careless-sharp-constant} identifies the stationary-entry flux $\mu_n$ to sharp asymptotic order.  Since the entry event $\{K_{t-1}<n,K_t=n\}$ depends only on the count coordinate, Lemma~\ref{lem:careless-clump} gives the corresponding stationary one-block law for the set-valued target.  Lemma~\ref{lem:careless-mixing} gives the required mixing and burn-in hypotheses.  Therefore Theorem~\ref{thm:stationary-entry} applies and yields $\mu_nT_{n,p}\Rightarrow\Exp(1)$.
\end{proof}

\begin{corollary}[Careless uniform exponential tails and moments]\label{cor:careless-moments}
Let $Y_n:=\mu_nT_{n,p}$ in the fixed-$p$ careless collector started from the empty collection, where
\[
        \mu_n=\Pbb_{\nu_n}(K_0<n,K_1=n)
\]
is the stationary-entry flux into the post-loss completion state.  There are constants $C,c\in(0,\infty)$, depending only on $p$, such that
\[
        \Pbb(Y_n>x)\le C e^{-cx},
        \qquad x\ge0,
\]
for all sufficiently large $n$.  Consequently, for every fixed $r\ge1$,
\[
        \Ebb Y_n^r\longrightarrow \Ebb Z^r=r!,
        \qquad Z\sim\Exp(1).
\]
In particular,
\[
        \Ebb T_{n,p}\sim \frac{1}{\mu_n},\qquad
        \Var(T_{n,p})\sim \frac{1}{\mu_n^2}.
\]
Together with Theorem~\ref{thm:careless-sharp-constant}, this gives the explicit constant
\[
        \mu_n\sim \frac{1}{(q;q)_\infty}
        \frac{n!}{n^n}q^{n(n+1)/2}.
\]
Consequently,
\[
        \Ebb T_{n,p}
        \sim (q;q)_\infty\frac{n^n}{n!}q^{-n(n+1)/2},
\]
and
\[
        \Var(T_{n,p})
        \sim (q;q)_\infty^2\left(\frac{n^n}{n!}\right)^2q^{-n(n+1)}.
\]
\end{corollary}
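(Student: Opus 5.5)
The plan is to follow the proof of Corollary~\ref{cor:clumsy-moments} verbatim, replacing the clumsy inputs by their careless analogues. I will take
\[
        b_n=n^3,\qquad h_n=\ceil{C_0n^2},\qquad L_n=b_n+h_n,
\]
with a fixed $C_0>1$. Lemma~\ref{lem:careless-mixing} then gives
\[
        \sup_S\|P^{h_n}(S,\cdot)-\widehat\pi_n\|_{\TV}\le nq^{h_n}=o(b_n\mu_n),
\]
since $\log(b_n\mu_n)=-\tfrac12|\log q|n^2+\OO(n)$ while $\log(nq^{h_n})\le \log n-C_0|\log q|n^2$. Lemma~\ref{lem:careless-clump} gives the stationary one-block law $\Pbb_{\nu_n}(N_b\ge1)=b_n\mu_n(1+o(1))$. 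Because the entry event depends only on the count coordinate, this probability is the same under $\widehat\pi_n$ for the set-valued chain.

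The key step is a uniform lower bound on the probability of completing within one block-plus-gap cycle. Fix any starting set $S$ and run the chain for the gap of length $h_n$. Hits during the gap only help, so it suffices to look at the active block that follows. By Lemma~\ref{lem:path-tv-comparison}, applied to the set-valued chain with the path event ``at least one new entry in the following $b_n$ steps'', the probability of an entry in that block is at least $b_n\mu_n(1+o(1))-o(b_n\mu_n)$. Hence, for all large $n$,
\[
        \inf_S\Pbb_S(T_{n,p}\le L_n)\ge\tfrac12 b_n\mu_n .
\]
Applying the Markov property at the times $L_n,2L_n,\ldots$ gives
\[
        \Pbb_\emptyset(T_{n,p}>mL_n)\le(1-\tfrac12b_n\mu_n)^m\le \exp\{-mb_n\mu_n/2\}.
\]
Since $h_n=o(b_n)$, we have $L_n\mu_n=b_n\mu_n(1+o(1))$. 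Taking $m=\floor{x/(L_n\mu_n)}$ converts this into $\Pbb(Y_n>x)\le Ce^{-cx}$, for example with $c=1/4$ and $C=e$, for all large $n$ and all $x\ge0$.

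The uniform exponential tail makes $Y_n^r$ uniformly integrable for each fixed $r$. Together with $Y_n\Rightarrow\Exp(1)$ from Theorem~\ref{thm:careless-final}, this gives $\Ebb Y_n^r\to r!$. The cases $r=1,2$ give $\Ebb T_{n,p}\sim\mu_n^{-1}$ and $\Ebb T_{n,p}^2\sim 2\mu_n^{-2}$, hence $\Var(T_{n,p})\sim\mu_n^{-2}$. Substituting the sharp constant for $\mu_n$ from Theorem~\ref{thm:careless-sharp-constant} yields the explicit formulas for the mean and the variance.

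The step needing the most care is the uniform block lower bound. The clump lemma is a statement about a stationary start, while the lower bound must hold from an arbitrary set. This is handled by measuring total variation on the set-valued chain, which is where Lemma~\ref{lem:careless-mixing} is stated, and by noting that the count chain inherits the same path-event bound. The gap length must be of order $n^2$, not $n$, because $\mu_n$ is of order $e^{-\Theta(n^2)}$; this is exactly why the constant $C_0>1$ is needed.
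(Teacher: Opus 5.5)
Your proposal is correct and follows essentially the same route as the paper's proof. Both use the scales $b_n=n^3$ and $h_n=\lceil C_0n^2\rceil$ with $C_0>1$, derive a uniform one-cycle lower bound $\tfrac12 b_n\mu_n$ from set-valued mixing and the path-event comparison, iterate with the Markov property, and conclude by uniform integrability. Your explicit constants $c=1/4$, $C=e$ and the variance computation fill in details the paper leaves implicit.
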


\begin{proof}
Use the same block scales as in Lemma~\ref{lem:careless-mixing}: choose
\[
        b_n=n^3,
        \qquad h_n=\ceil{C_0n^2},
\]
with $C_0>1$, and put $L_n=b_n+h_n$.  By Lemma~\ref{lem:careless-mixing}, the set-valued chain is within $o(b_n\mu_n)$ of stationarity after $h_n$ steps.  Under the set-valued stationary law, the count coordinate has distribution $\nu_n$, and the event of a new entry into $[n]$ depends only on the count coordinate.  Hence Lemma~\ref{lem:careless-clump} gives stationary active-block probability
\[
        b_n\mu_n(1+o(1)).
\]
By the path-event total-variation comparison from Section~\ref{sec:stationary-entry-module}, the probability of this active-block event after the initial gap differs from its stationary value by at most the total-variation distance after the gap.  Hits during the initial gap only help.  Therefore, uniformly over all starting sets $S\subseteq[n]$,
\[
        \Pbb_S(T_{n,p}\le L_n)\ge \frac12 b_n\mu_n
\]
for all sufficiently large $n$.

By the strong Markov property at times $L_n,2L_n,\ldots$, for every integer $m\ge0$,
\[
        \Pbb_S(T_{n,p}>mL_n)
        \le \left(1-\frac12 b_n\mu_n\right)^m
        \le \exp\{-m b_n\mu_n/2\}.
\]
Since $h_n=o(b_n)$, we have $b_n/L_n\to1$.  Hence there are constants $C,c>0$ such that, for all large $n$ and all $y\ge0$,
\[
        \Pbb(\mu_nT_{n,p}>y)\le C e^{-cy}.
\]
This gives uniform integrability of $Y_n^r$ for every fixed $r\ge1$.  Combining this with Theorem~\ref{thm:careless-final}, which gives $Y_n\Rightarrow Z\sim\Exp(1)$, yields moment convergence.  The formulas for the mean and variance follow from the cases $r=1$ and $r=2$.
\end{proof}

\subsection{Scale comparison and marginal-independence heuristic}

\begin{corollary}[Careless logarithmic scale]\label{cor:careless-log-scale}
For fixed $p\in(0,1)$ and $q=1-p$,
\[
        \log T_{n,p}
        =\frac{n(n+1)}{2}\log\frac1q
          +\log\frac{n^n}{n!}
          +\OO_{\mathbb P}(1).
\]
Equivalently, by Stirling's formula,
\[
        \log T_{n,p}
        =\frac{n(n+1)}{2}\log\frac1{1-p}
          +n-\frac12\log(2\pi n)+\OO_{\mathbb P}(1).
\]
\end{corollary}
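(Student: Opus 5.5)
The plan is to combine the distributional limit of Theorem~\ref{thm:careless-final} with the sharp flux asymptotic of Theorem~\ref{thm:careless-sharp-constant}, and then take logarithms.

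\textbf{Step 1 (tightness of \(\log(\mu_nT_{n,p})\)).} Write
\[
        \log T_{n,p}=-\log\mu_n+\log(\mu_nT_{n,p}).
\]
By Theorem~\ref{thm:careless-final}, \(Y_n:=\mu_nT_{n,p}\Rightarrow Z\sim\Exp(1)\). Since \(Z\) has no atom at \(0\) or at \(\infty\), for every \(\varepsilon>0\) we can choose \(0<a<b<\infty\) with \(\Pbb(a\le Z\le b)\ge1-\varepsilon\). Because \(a\) and \(b\) are continuity points, the Portmanteau theorem gives \(\Pbb(a\le Y_n\le b)\ge1-2\varepsilon\) for all large \(n\). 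Hence \(\log Y_n=\OO_{\mathbb P}(1)\). The upper tail also follows from Corollary~\ref{cor:careless-moments}, but the lower tail genuinely needs the weak limit, since the event \(Y_n\to0\) must be excluded.

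\textbf{Step 2 (deterministic part).} Theorem~\ref{thm:careless-sharp-constant} gives
\[
        \mu_n=\frac{1}{(q;q)_\infty}\frac{n!}{n^n}q^{n(n+1)/2}(1+o(1)).
\]
Therefore
\[
        -\log\mu_n=\frac{n(n+1)}2\log\frac1q+\log\frac{n^n}{n!}+\log (q;q)_\infty+o(1).
\]
The quantity \(\log(q;q)_\infty\) is a finite constant depending only on \(p\), because \((q;q)_\infty>0\). It is absorbed into the \(\OO_{\mathbb P}(1)\) term, which yields the first display.

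\textbf{Step 3 (Stirling).} Stirling's formula \(n!=\sqrt{2\pi n}\,n^ne^{-n}(1+o(1))\) gives
\[
        \log\frac{n^n}{n!}=n-\tfrac12\log(2\pi n)+o(1).
\]
Substituting this into Step 2 gives the second display, using \(q=1-p\).

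There is no real obstacle. The only point needing care is Step 1, since convergence in distribution alone does not bound \(\log Y_n\) from below; this is handled because \(\Exp(1)\) puts no mass at \(0\), so the Portmanteau argument controls both tails of \(\log Y_n\).
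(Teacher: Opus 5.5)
Your proposal is correct and matches the paper's proof. Both take the same three steps:
\begin{enumerate}
\item Deduce tightness of $\log(\mu_nT_{n,p})$ from the $\Exp(1)$ limit. You justify this more explicitly than the paper, via the Portmanteau theorem and the fact that $\Exp(1)$ puts no mass at $0$.
\item Insert the sharp flux asymptotic $\mu_n\sim(q;q)_\infty^{-1}\frac{n!}{n^n}q^{n(n+1)/2}$, absorbing the constant $\log(q;q)_\infty$ into the $\OO_{\mathbb P}(1)$ term.
\item Apply Stirling's formula.
\end{enumerate}
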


\begin{proof}
Theorem~\ref{thm:careless-final} gives $\mu_nT_{n,p}\Rightarrow\Exp(1)$.  Since $\Exp(1)$ is supported on $(0,\infty)$, this implies
\[
        \log(\mu_nT_{n,p})=\OO_{\mathbb P}(1).
\]
Theorem~\ref{thm:careless-sharp-constant} gives $\mu_n\sim(q;q)_\infty^{-1}R_n$, and therefore
\[
        \log T_{n,p}=\log R_n^{-1}+\OO_{\mathbb P}(1).
\]
Since
\[
        R_n=\frac{n!}{n^n}q^{n(n+1)/2},
\]
the first displayed formula follows.  Stirling's formula gives the second.
\end{proof}

\begin{remark}[Comparison with Cruciani--Dudeja and the marginal-independence scale]
Cruciani and Dudeja~\cite[Sections~4.2--4.3]{CrucianiDudeja} study the same post-loss hitting time and identify a substantial gap between their rigorous lower and upper bounds.  With
\[
        q_*:=\frac{1-p}{1-p+np},
\]
their lower bound and spatial-temporal mean-field analysis predict the scale $q_*^{-n}$, and they state that closing the quantitative gap should require a more refined Markov-chain analysis.

The high-tail analysis above supplies such a Markov-chain analysis in the fixed-$p$ post-loss regime.  It shows that the marginal mean-field scale is not the true stationary-entry scale.  Instead,
\[
        \Ebb T_{n,p}
        \sim
        (q;q)_\infty\,\frac{n^n}{n!}\,q^{-n(n+1)/2},
        \qquad q=1-p.
\]
Thus
\[
        \log \Ebb T_{n,p}
        =
        \frac{n(n+1)}{2}|\log q|+n+O(\log n).
\]
For fixed $p$, this is quadratic-exponential in $n$, whereas
\[
        q_*^{-n}=\left(\frac{np+q}{q}\right)^n
\]
has logarithm $n\log n+O(n)$.  The extra factor is therefore not a constant-order correction; it changes the logarithmic scale.  Its source is the ordered lucky climb through the high tail of the count chain.

The same point can be phrased as a failure of the naive product-marginal heuristic.  Stationarity of the mean count gives
\[
        \Ebb_{\nu_n}K=\frac{q}{p+q/n},
\]
so a one-coordinate marginal calculation suggests occupancy probability
\[
        \pi_{1,n}=\frac{q}{np+q}=q_*.
\]
Treating coupon presences as independent would put the all-present mass near $\pi_{1,n}^n$ and predict the reciprocal scale $q_*^{-n}$.  The cut-flux computation instead shows that the high tail is not governed by the independent one-point product heuristic, and gives the lucky-climb scale
\[
        \frac{n!}{n^n}q^{n(n+1)/2}.
\]
\end{remark}

\section{A combined clumsy-careless model}

The combined model checks that the careless high-tail entry mechanism is stable when selected-type refresh is layered with global thinning.  Fix $\alpha\in[0,1)$ and $\beta\in(0,1)$, and put $Q:=1-\alpha$ and $S:=1-\beta$.  At each round, one coupon type is drawn uniformly.  The selected type is refreshed to present with probability $Q$ and to absent with probability $\alpha$.  After this selected-type refresh, every currently present coupon is independently retained with probability $S$.  Completion is checked after the global thinning step.

Let $K_t$ be the number of coupon types present after thinning.  If $K_t=k$, then
\begin{equation}\label{eq:combined-transition}
        K_{t+1}\sim
        \begin{cases}
        \Bin(k-1,S)+\mathrm{Bernoulli}(QS),&\text{with probability }k/n,\\[1mm]
        \Bin(k,S)+\mathrm{Bernoulli}(QS),&\text{with probability }(n-k)/n.
        \end{cases}
\end{equation}
The first line is omitted when $k=0$.  The set-valued chain is irreducible and aperiodic: from any set all present coupons can be lost during thinning, from the empty set any prescribed set can be built by successive successful refreshes with no thinning losses, and $P(\emptyset,\emptyset)>0$.  By symmetry, the count-chain stationary law is the image of the set-valued stationary law under $S\mapsto |S|$.  Denote it by $\nu_n^{\alpha,\beta}$, and let
\[
        \mu_n^{\alpha,\beta}
        :=\Pbb_{\nu_n^{\alpha,\beta}}(K_0<n,K_1=n)
\]
be the stationary-entry flux into the post-thinning completion state.

For $0\le k<n$, the only way to move from $k$ to $k+1$ is to draw a missing coupon, refresh it successfully, and retain all $k+1$ present coupons through global thinning.  Thus
\[
        u_k^{\alpha,\beta}
        =\frac{n-k}{n}Q S^{k+1},
\]
and the associated lucky-climb weights are
\begin{equation}\label{eq:combined-lucky-scale}
        w_{n,k}^{\alpha,\beta}
        :=\prod_{r=0}^{k-1}u_r^{\alpha,\beta}
        =\frac{(n)_k}{n^k}Q^kS^{k(k+1)/2},
        \qquad w_{n,0}^{\alpha,\beta}:=1.
\end{equation}

\begin{theorem}[Combined clumsy-careless collector]\label{thm:combined-main}
For fixed $\alpha\in[0,1)$ and $\beta\in(0,1)$,
\[
        \mu_n^{\alpha,\beta}
        \sim
        \frac{1}{(S;S)_\infty}
        \frac{n!}{n^n}Q^nS^{n(n+1)/2}.
\]
If $T_n^{\alpha,\beta}$ is the first post-thinning completion time from the empty collection, then
\[
        \mu_n^{\alpha,\beta}T_n^{\alpha,\beta}
        \Rightarrow \Exp(1).
\]
Moreover, for every fixed $r\ge1$,
\[
        \Ebb\bigl(\mu_n^{\alpha,\beta}T_n^{\alpha,\beta}\bigr)^r\to r!.
\]
Consequently,
\[
        \Ebb T_n^{\alpha,\beta}
        \sim
        (S;S)_\infty
        \frac{n^n}{n!}Q^{-n}S^{-n(n+1)/2},
\]
and
\[
        \Var(T_n^{\alpha,\beta})
        \sim
        (S;S)_\infty^2
        \left(\frac{n^n}{n!}\right)^2
        Q^{-2n}S^{-n(n+1)}.
\]
\end{theorem}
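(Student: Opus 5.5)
The plan is to repeat the careless argument of Section~5 step by step, replacing $q$ by $S$ in the thinning factors and inserting the refresh factor $Q$ in each upward step.

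\textbf{Entry flux.} Only state $n-1$ can enter $n$. From $n-1$ this requires drawing the missing coupon, refreshing it, and retaining all $n$ coupons, so $P(n-1,n)=u^{\alpha,\beta}_{n-1}=QS^n/n$. Stationarity at $n$ then gives
\[
\mu_n^{\alpha,\beta}=\nu_n^{\alpha,\beta}(n)\bigl(1-P(n,n)\bigr),
\]
where $P(n,n)=(1-1/n+Q/n)S^n\to0$. It therefore suffices to prove $\nu_n^{\alpha,\beta}(n)\sim (S;S)_\infty^{-1}w^{\alpha,\beta}_{n,n}$.

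\textbf{Uniform high-tail comparison.} I would follow the proof of Theorem~\ref{thm:careless-high-tail}.
\begin{itemize}
\item The conditional mean is $\Ebb[K_{t+1}\mid K_t=k]=S(k(1-1/n)+Q)$. This gives $\Ebb_{\nu}K\le SQ/\beta$, hence tightness and mass at least $3/4$ on $\{0,\dots,M_0\}$.
\item Total thinning from a low state gives $\nu(0)\ge c_0>0$.
\item The lucky climb from $0$ gives $\nu(k)\ge c_0 w^{\alpha,\beta}_{n,k}$.
\item For the upper bound, $P(\ell,\{\ge\ell\})\le C\ell S^{\ell}$: after refresh there are at most $\ell+1$ present coupons, and at most one may be lost.
\item The cut-flux inequality $\nu(k+1)P(k+1,A_k)\le \nu(k)u_k$ with $1-P(k+1,A_k)=O(kS^k)$ then iterates to $\nu(k)\le C w^{\alpha,\beta}_{n,k}$ for $k\ge M$.
\end{itemize}

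\textbf{Limit chain and local convergence.} Next I would identify the limiting infinite chain. As $n\to\infty$ with $k$ fixed, the kernel converges to
\[
Y_{t+1}\sim \Bin(Y_t,S)+\mathrm{Bernoulli}(QS).
\]
Its stationary law is that of $\sum_{r\ge1}B_r$ with $B_r\sim\mathrm{Bernoulli}(QS^r)$ independent: thinning the existing Bernoulli$(QS^r)$ particles gives Bernoulli$(QS^{r+1})$, and a new particle of probability $QS$ is added. The $q$-binomial expansion with $w=Q(z-1)$ then gives
\[
\pi(k)\sim Q^kS^{k(k+1)/2}/(S;S)_\infty.
\]
The correction terms are bounded by $\sum_{\ell\ge1}\binom{k+\ell}{\ell}(QS^k)^\ell\to0$. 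The proof of Lemma~\ref{lem:careless-local-convergence} transfers verbatim, using tightness and the binomial tail bound $P_n(k,j)\le C_j(k+1)^j\beta^{k-j}$, together with uniqueness for the irreducible limit chain. This yields $m_n=o(\sqrt n)$ with $\nu_n(m_n)/w^{\alpha,\beta}_{n,m_n}\to(S;S)_\infty^{-1}$, since $(n)_{m_n}/n^{m_n}\to1$.

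\textbf{Propagation to the top.} Lemma~\ref{lem:normalized-propagation} carries over. The remainder satisfies $R_k/w_{n,k+1}\le C\sum_h (QS^{k+2})^h=O(S^k)$ and $1-d_k=O(kS^k)$, so $a_{n,k}$ is essentially constant from $m_n$ to $n$. This gives the sharp constant.

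\textbf{Hypotheses of Theorem~\ref{thm:stationary-entry}.}
\begin{itemize}
\item Clump control is as in Lemma~\ref{lem:careless-clump}: $\Pbb_x(K_u=n)\le S^n$, since all $n$ coupons must survive thinning.
\item For mixing, I would couple two set-valued copies with common draws, refresh bits and retention bits. A discrepancy at a type survives a round only if the type is not drawn and survives thinning, so the TV distance is at most $nS^t$. With $b_n=n^3$ and $h_n=r_n=\ceil{Cn^2}$, $C>1$, one has $nS^{h_n}=o(b_n\mu_n)$, because $\log\mu_n=-\tfrac12|\log S|n^2+O(n)$ (when $Q$ is fixed).
\item Burn-in from empty is $\Pbb(T\le r_n)\le r_nS^n$.
\end{itemize}
This gives the exponential limit. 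The uniform exponential tails and moment convergence then follow exactly as in Corollary~\ref{cor:careless-moments}, via the strong Markov property on blocks of length $L_n$, and the mean and variance follow from $r=1,2$.

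\textbf{Expected obstacle.} The hard step is the local convergence plus matching of the tail constant, specifically identifying the limit stationary law and checking the $q$-binomial asymptotics with the extra $Q^k$. Everything else is a mechanical substitution of $S$ for $q$ and of $QS^{k+1}$ for $q^{k+1}$. The case $\alpha=0$ is the plain careless-type model and needs no separate treatment.
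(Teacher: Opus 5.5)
Your proposal is correct and follows essentially the same route as the paper: the uniform lucky-climb comparison, the limiting immigration--thinning chain with tail constant $(S;S)_\infty^{-1}$ via the $q$-binomial theorem in base $S$, local convergence plus normalized cut-flux propagation up to level $n$, and then clump control, the $nS^t$ coupling, burn-in, and the block tail bound fed into Theorem~\ref{thm:stationary-entry}. One harmless slip: from state $n$ the selected type is always already present, so $P(n,n)=QS^n$ rather than $(1-1/n+Q/n)S^n$; this does not matter, since only $P(n,n)\to0$ is needed for $\mu_n^{\alpha,\beta}\sim\nu_n^{\alpha,\beta}(n)$.
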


\begin{proof}
The high-tail estimate in Proposition~\ref{prop:combined-high-tail-appendix} gives
\[
        \nu_n^{\alpha,\beta}(n)
        \sim
        \frac{1}{(S;S)_\infty}
        \frac{n!}{n^n}Q^nS^{n(n+1)/2}.
\]
From state $n$, the chain remains at $n$ for one more step only if the selected type is refreshed to present and all $n$ coupons survive thinning, so $P(n,n)=QS^n$.  Therefore
\[
        \mu_n^{\alpha,\beta}
        =\nu_n^{\alpha,\beta}(n)(1-QS^n)
        \sim \nu_n^{\alpha,\beta}(n),
\]
which proves the flux asymptotic.

Lemma~\ref{lem:combined-clump} gives the stationary one-block law, and Lemma~\ref{lem:combined-mixing} gives mixing and burn-in from the empty initial state.  The target-entry event for the set-valued chain is exactly the count event $\{K_{t-1}<n, K_t=n\}$, so the count-chain one-block law is the corresponding one-block law for the set-valued all-present target.  Theorem~\ref{thm:stationary-entry} therefore yields the exponential hitting limit.

Proposition~\ref{prop:combined-uniform-tails} gives uniform exponential tails for $\mu_n^{\alpha,\beta}T_n^{\alpha,\beta}$, hence uniform integrability of every fixed power.  Thus
\[
        \Ebb\bigl(\mu_n^{\alpha,\beta}T_n^{\alpha,\beta}\bigr)^r\to r!
        \qquad(r\ge1\text{ fixed}).
\]
The mean and variance asymptotics are the cases $r=1,2$, together with the flux asymptotic above.
\end{proof}

\begin{remark}[A singular boundary at zero global loss]
The assumption $\beta>0$ is essential.  If $\beta=0$ and $\alpha\in(0,1)$, then $S=1$, global thinning disappears, and the product $(S;S)_\infty$ vanishes.  This is not a removable limiting singularity: the combined model reduces to the pure clumsy collector with selected-type loss probability $\alpha$, whose completion scale is $(\alpha(1-\alpha)^n)^{-1}$ rather than the lucky-climb scale in Theorem~\ref{thm:combined-main}.  If $\alpha=0$ as well, the model degenerates further to the ordinary monotone coupon collector.
\end{remark}

\clearpage
\section{Conclusion}

The examples above show two ways non-monotone coupon collectors reach completion.  For the reset-button collector, the underlying model and beta-function expectation are due to Jockovi\'c and Todi\'c; our contribution is to recast the process through exact regeneration at the empty state, where the model-specific input is the successful-excursion probability.  For the clumsy and careless collectors, completion is a rare entry into a metastable completion state, and the model-specific input is the stationary-entry flux plus one-block clump control.  Together, these examples fit into the same terminal-condition viewpoint.

For the clumsy collector, the product Bernoulli stationary law makes the entry rate explicit and gives a fixed-$p$ exponential limit.  For the post-loss careless collector of Cruciani and Dudeja, the stationary high tail is not governed by the independent one-point product heuristic.  Stationary cut identities instead yield the stationary-entry flux scale
\[
        \frac{1}{(q;q)_\infty}\frac{n!}{n^n}q^{n(n+1)/2},
        \qquad q=1-p.
\]
This replaces the one-point marginal mean-field scale in the fixed-$p$ regime and changes the logarithmic completion scale from $n\log n+O(n)$ to $\frac12|\log q|n^2+O(n)$.  The same high-tail entry mechanism also governs the combined clumsy-careless model.  Across these stationary-entry models, the block construction gives uniform exponential tails for the normalized hitting times and hence convergence of all fixed moments to the corresponding exponential moments.

\clearpage
\appendix
\section{Reset-button asymptotic regimes}\label{app:reset-asymptotic-regimes}

\subsection{Equal-probability rare-success regimes}

Throughout this section the standard coupons are equally likely, so Lemma~\ref{lem:reset-uniform-laplace} applies.

\begin{lemma}[Verification of the rare-success hypotheses]\label{lem:reset-equal-hypotheses}
Assume either
\begin{enumerate}[label=\textnormal{(\alph*)},leftmargin=2.4em]
\item $\rho_n\to\rho\in(0,1)$, or
\item $\rho_n\sim\lambda/n$ for some fixed $\lambda>0$.
\end{enumerate}
Then the hypotheses of Theorem~\ref{thm:reset-rare-success} hold.  In addition, the optional right-tilt condition \eqref{eq:reset-right-tilt-condition} holds for every fixed $a\in(0,1)$.
\end{lemma}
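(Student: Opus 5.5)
We verify the three hypotheses directly from the explicit product formula of Lemma~\ref{lem:reset-uniform-laplace}. Write $a_n:=n\rho_n/q_n$, so that
\[
        s_n=\prod_{j=1}^n\frac{j}{j+a_n}=\frac{\Gamma(n+1)\Gamma(1+a_n)}{\Gamma(n+1+a_n)}.
\]
In case (a), $a_n\sim n\rho/(1-\rho)\to\infty$ linearly, and Stirling's formula gives $s_n$ exponentially small in $n$. In case (b), $a_n\to\lambda$, and $s_n\sim\Gamma(1+\lambda)n^{-\lambda}\to0$. Either way $s_n\to0$.

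For the derivative condition, take logarithmic derivatives of the product form of $\phi_n$. Since $x\,\frac{d}{dx}\log\frac{(j/n)x}{1-(1-j/n)x}=\frac{1}{1-(1-j/n)x}$, we get
\[
        q_n\phi_n'(q_n)=s_n\sum_{j=1}^n\frac{1}{1-(1-j/n)q_n}
        =s_n\sum_{j=1}^n\frac{n}{n\rho_n+jq_n}.
\]
Hence
\[
        \rho_nq_n\phi_n'(q_n)=s_n\sum_{j=1}^n\frac{a_n}{a_n+j}\le s_n\min(n,\,a_nH_n),
\]
with $H_n$ the harmonic number. In case (a) this is $n$ times an exponentially small quantity, so it tends to $0$. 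In case (b) it is $O(n^{-\lambda}\log n)\to0$. This establishes \eqref{eq:reset-rare-success-assumptions}.

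For the right-tilt condition \eqref{eq:reset-right-tilt-condition}, set $x_n=q_ne^{\varepsilon_n}$ with $\varepsilon_n=a\rho_ns_n$. Then $1-(1-j/n)x_n=(n\rho_n+jq_n)/n-\delta_{n,j}$, where $0\le\delta_{n,j}\le q_n(e^{\varepsilon_n}-1)=O(\rho_ns_n)$. Each factor of $\phi_n(x_n)/s_n$ therefore equals
\[
        e^{\varepsilon_n}\Bigl(1-\frac{n\delta_{n,j}}{n\rho_n+jq_n}\Bigr)^{-1}.
\]
Taking logarithms, and using that each correction term is uniformly small (it is at most $O(s_n)$ since $n\rho_n+jq_n\ge n\rho_n$), we obtain
\[
        \log\frac{\phi_n(x_n)}{s_n}=n\varepsilon_n+O\Bigl(\sum_{j}\frac{n\rho_ns_n}{n\rho_n+jq_n}\Bigr)
        =O\bigl(n\rho_ns_n+s_n\min(n,a_nH_n)\bigr).
\]
This is the same quantity controlled in the derivative step. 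It tends to zero in case (a) because $s_n$ is exponentially small, and in case (b) because it is $O(n^{-\lambda}\log n)$.

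The main obstacle is the uniformity in this last step. We must check that $x_n<1$, which holds since $q_ne^{\varepsilon_n}<1$ as $\varepsilon_n\ll\rho_n$, and that no factor's denominator comes near zero. This is guaranteed because $\delta_{n,j}$ is $O(\rho_ns_n)$ while each denominator is at least $\rho_n$.
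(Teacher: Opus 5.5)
Your proof is correct and follows essentially the same route as the paper. Both use the logarithmic-derivative identity for the product formula, bound $\sum_j 1/(\rho_n+jq_n/n)$ by $O(n)$ in case (a) and $O(n\log n)$ in case (b), and combine this with the size of $s_n$. Your right-tilt step is a factor-by-factor version of the paper's integration of the logarithmic derivative over $[q_n,q_ne^{a\rho_ns_n}]$. The unified bound $s_n\min(n,a_nH_n)$ is a tidy way to treat both cases and both conditions at once.

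One small correction: in case (b), $a_n\to\lambda$ only gives $s_n=\Gamma(1+a_n)n^{-a_n}(1+o(1))=n^{-\lambda+o(1)}$. The sharper claim $s_n\sim\Gamma(1+\lambda)n^{-\lambda}$ needs $(a_n-\lambda)\log n\to0$. Your error terms should therefore read $n^{-\lambda+o(1)}\log n$ rather than $O(n^{-\lambda}\log n)$; they still tend to zero, so the conclusion is unaffected.
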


\begin{proof}
Let $\phi_n$ be the ordinary uniform collector PGF\@.  From Lemma~\ref{lem:reset-uniform-laplace},
\[
        \log\phi_n(x)=\sum_{j=1}^n
        \log\left(\frac{(j/n)x}{1-(1-j/n)x}\right).
\]
Therefore
\[
        \frac{x\phi_n'(x)}{\phi_n(x)}
        =\sum_{j=1}^n\frac{1}{1-(1-j/n)x}.
\]
At $x=q_n$ this gives
\[
        \frac{q_n\phi_n'(q_n)}{s_n}
        =\sum_{j=1}^n\frac{1}{\rho_n+jq_n/n}.
\]
If $\rho_n\to\rho\in(0,1)$, this sum is $O(n)$, while Lemma~\ref{lem:reset-uniform-laplace} and Stirling's formula give $s_n$ exponentially small.  Hence
\[
        \rho_nq_n\phi_n'(q_n)
        =\rho_ns_n\frac{q_n\phi_n'(q_n)}{s_n}
        \to0.
\]
Moreover, for fixed $a\in(0,1)$,
\[
        \log\frac{\phi_n(q_ne^{a\rho_ns_n})}{\phi_n(q_n)}
        \le a\rho_ns_ne^{a\rho_ns_n}
        \frac{q_ne^{a\rho_ns_n}\phi_n'(q_ne^{a\rho_ns_n})}{\phi_n(q_ne^{a\rho_ns_n})}.
\]
The logarithmic derivative at $q_ne^{a\rho_ns_n}$ is still $O(n)$, while $s_n$ is exponentially small.  Thus the logarithm tends to zero, proving the right-tilt condition.

If $\rho_n\sim\lambda/n$, put
\[
        a_n:=\frac{n\rho_n}{q_n}.
\]
Then $a_n\to\lambda$ and $q_n\to1$.  Hence
\[
        \sum_{j=1}^n\frac{1}{\rho_n+jq_n/n}
        \le Cn\sum_{j=1}^n\frac1{j+\lambda/2}
        =O(n\log n).
\]
By Lemma~\ref{lem:reset-uniform-laplace} and the uniform gamma-ratio asymptotic for $a_n$ in a compact subset of $(0,\infty)$,
\[
        s_n=\Gamma(1+a_n)n^{-a_n}(1+o(1))=n^{-\lambda+o(1)}.
\]
Therefore
\[
        \rho_ns_nO(n\log n)=n^{-\lambda+o(1)}\log n\to0,
\]
which proves $\rho_nq_n\phi_n'(q_n)\to0$.  The same bound gives the optional right-tilt condition.  Indeed, for fixed $a\in(0,1)$,
\[
        a\rho_ns_n=o(\rho_n),
\]
so, uniformly for $x$ between $q_n$ and $q_ne^{a\rho_ns_n}$,
\[
        1-(1-j/n)x \asymp \rho_n+j/n,
        \qquad 1\le j\le n.
\]
Hence $x\phi_n'(x)/\phi_n(x)=O(n\log n)$ throughout this interval, and integrating the logarithmic derivative over an interval of logarithmic length $a\rho_ns_n+o(\rho_ns_n)$ gives
\[
        \log\frac{\phi_n(q_ne^{a\rho_ns_n})}{\phi_n(q_n)}=o(1).
\]
\end{proof}

\begin{corollary}[Fixed reset probability]\label{cor:reset-fixed-rho}
Assume $\rho_n\equiv\rho\in(0,1)$ and put $q=1-\rho$.  Then
\[
        s_n
        \sim
        \sqrt{2\pi\rho n}\left(q\rho^{\rho/q}\right)^n.
\]
Consequently
\[
        \rho\sqrt{2\pi\rho n}\left(q\rho^{\rho/q}\right)^nT_n
        \Rightarrow \Exp(1),
\]
and for every fixed integer $r\ge1$,
\[
        \Ebb\left[\rho\sqrt{2\pi\rho n}\left(q\rho^{\rho/q}\right)^nT_n\right]^r
        \longrightarrow r!.
\]
In particular,
\[
        \Ebb T_n
        \sim
        \frac{1}{\rho\sqrt{2\pi\rho n}}
        \left(q\rho^{\rho/q}\right)^{-n}.
\]
\end{corollary}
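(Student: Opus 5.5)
The plan is to evaluate $s_n$ exactly through the gamma form in Lemma~\ref{lem:reset-uniform-laplace}, and then read off the distributional and moment statements from Theorem~\ref{thm:reset-rare-success} via Lemma~\ref{lem:reset-equal-hypotheses}(a). With $\rho_n\equiv\rho$ and $a:=\rho/q$ we have $a_n=na$ and $n+a_n=n/q$, so
\[
        s_n=\frac{\Gamma(n+1)\Gamma(1+na)}{\Gamma(1+n/q)}.
\]

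To get the asymptotic for $s_n$, I will apply Stirling's formula $\Gamma(1+x)\sim\sqrt{2\pi x}\,(x/e)^x$ to each of the three factors. The exponentials $e^{-n}e^{-na}e^{n/q}$ cancel, because $1+a=1/q$. The prefactors contribute
\[
        \frac{\sqrt{2\pi n}\sqrt{2\pi na}}{\sqrt{2\pi n/q}}=\sqrt{2\pi n\,aq}=\sqrt{2\pi\rho n}.
\]
For the powers, I write
\[
        \frac{n^n(na)^{na}}{(n/q)^{n/q}}=a^{na}q^{n/q}
\]
after the $n$-powers cancel. Substituting $a=\rho/q$ gives $(\rho/q)^{n\rho/q}q^{n/q}=\rho^{n\rho/q}q^{n(1-\rho)/q}=\rho^{n\rho/q}q^n$. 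Together these yield $s_n\sim\sqrt{2\pi\rho n}\,(q\rho^{\rho/q})^n$. The only care needed is this bookkeeping of exponents; I expect it to be the main, though mild, obstacle.

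Next, Lemma~\ref{lem:reset-equal-hypotheses}(a) shows that the hypotheses of Theorem~\ref{thm:reset-rare-success} hold. That theorem then gives $\rho s_nT_n\Rightarrow\Exp(1)$ and $\Ebb(\rho s_nT_n)^r\to r!$. Since $s_n/(\sqrt{2\pi\rho n}(q\rho^{\rho/q})^n)\to1$, Slutsky's lemma lets me replace $s_n$ by its asymptotic in the weak limit. For the moments, the $r$-th power of that ratio also tends to $1$, so the $r$-th moment still converges to $r!$.

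Finally, for the mean I can take the case $r=1$. Alternatively, Corollary~\ref{cor:reset-exact-mean} gives $\Ebb T_n=(1-s_n)/(\rho s_n)\sim1/(\rho s_n)$ because $s_n\to0$, and inserting the asymptotic for $s_n$ gives the stated formula.
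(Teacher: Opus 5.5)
Your proposal is correct and follows the paper's proof essentially step for step. You apply Stirling to the gamma form in Lemma~\ref{lem:reset-uniform-laplace}, and your exponent bookkeeping via $n+na=n/q$ and $aq=\rho$ checks out. You then combine Lemma~\ref{lem:reset-equal-hypotheses}(a) with Theorem~\ref{thm:reset-rare-success}, replace $s_n$ by its asymptotic equivalent, and use Corollary~\ref{cor:reset-exact-mean} for the mean. The only thing the paper states explicitly that you leave implicit is that $s_n\to0$ because $0<q\rho^{\rho/q}<1$, which follows at once from your asymptotic.
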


\begin{proof}
By Lemma~\ref{lem:reset-uniform-laplace}, with $a_n=n\rho/q$,
\[
        s_n=\frac{\Gamma(n+1)\Gamma(1+a_n)}{\Gamma(n+1+a_n)}.
\]
Stirling's formula gives
\[
        s_n\sim
        \sqrt{2\pi n\frac{\rho/q}{1+\rho/q}}
        \left(\frac{(\rho/q)^{\rho/q}}{(1+\rho/q)^{1/q}}\right)^n.
\]
Since $(\rho/q)/(1+\rho/q)=\rho$ and $1+\rho/q=1/q$, this simplifies to
\[
        s_n\sim\sqrt{2\pi\rho n}\left(q\rho^{\rho/q}\right)^n.
\]
Because $0<q\rho^{\rho/q}<1$, this also shows that $s_n\to0$.
Lemma~\ref{lem:reset-equal-hypotheses} verifies the hypotheses of
Theorem~\ref{thm:reset-rare-success} in the fixed-$\rho$ regime.  Hence
\[
        \rho s_nT_n\Rightarrow\Exp(1),
        \qquad
        \Ebb(\rho s_nT_n)^r\to r!
\]
for every fixed integer $r\ge1$.  Replacing $s_n$ by the asymptotic equivalent
above gives the displayed normalization and moment convergence.  Finally,
Corollary~\ref{cor:reset-exact-mean} gives
\[
        \Ebb T_n=\frac{1-s_n}{\rho s_n}\sim\frac1{\rho s_n},
\]
which is the stated expectation asymptotic.
\end{proof}

\begin{corollary}[Polynomial reset regime]\label{cor:reset-lambda-over-n}
Assume
\[
        \rho_n\sim\frac{\lambda}{n},
        \qquad \lambda>0,
\]
and put
\[
        a_n:=\frac{n\rho_n}{1-\rho_n}.
\]
Then
\[
        s_n=\Gamma(1+a_n)n^{-a_n}(1+o(1)),
\]
and therefore
\[
        \rho_n\Gamma(1+a_n)n^{-a_n}T_n
        \Rightarrow\Exp(1).
\]
Moreover, all fixed positive integer moments converge to the corresponding exponential moments under this normalization.

If, in addition,
\[
        (a_n-\lambda)\log n\to0,
\]
then the simpler normalization is valid:
\[
        \frac{\lambda\Gamma(\lambda+1)}{n^{\lambda+1}}T_n
        \Rightarrow\Exp(1),
\]
again with convergence of every fixed positive integer moment.
\end{corollary}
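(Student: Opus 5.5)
The plan is to run the same pipeline as Corollary~\ref{cor:reset-fixed-rho}, now with Lemma~\ref{lem:reset-equal-hypotheses}(b) supplying the hypotheses of Theorem~\ref{thm:reset-rare-success}.

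\textbf{Step 1: asymptotics of $s_n$.} By Lemma~\ref{lem:reset-uniform-laplace},
\[
        s_n=\frac{\Gamma(n+1)\Gamma(1+a_n)}{\Gamma(n+1+a_n)} .
\]
Since $\rho_n\sim\lambda/n$ and $q_n\to1$, we have $a_n\to\lambda$, so $a_n$ eventually lies in a compact subset of $(0,\infty)$. The standard ratio asymptotic $\Gamma(n+1)/\Gamma(n+1+a)=n^{-a}(1+o(1))$ holds uniformly for $a$ in compacts; one can see this from Stirling with an $\OO(1/n)$ error uniform in $a$. This gives $s_n=\Gamma(1+a_n)n^{-a_n}(1+o(1))$, and in particular $s_n\to0$.

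\textbf{Step 2: the limit law and moments.} Lemma~\ref{lem:reset-equal-hypotheses} in case (b) verifies \eqref{eq:reset-rare-success-assumptions}. Theorem~\ref{thm:reset-rare-success} then yields
\[
        \rho_ns_nT_n\Rightarrow\Exp(1),
        \qquad
        \Ebb(\rho_ns_nT_n)^r\to r! .
\]
Replace $s_n$ by its equivalent $\Gamma(1+a_n)n^{-a_n}$: the ratio of the two normalizations tends to $1$. Slutsky's lemma transfers the distributional limit. For moments, the $r$-th moment is multiplied by the $r$-th power of that ratio, which still tends to $1$. This proves the first normalization.

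\textbf{Step 3: the simplified normalization.} We compare $\rho_n\Gamma(1+a_n)n^{-a_n}$ with $\lambda\Gamma(\lambda+1)n^{-\lambda-1}$ via three ratios:
\begin{itemize}
\item $n\rho_n/\lambda\to1$ by hypothesis;
\item $\Gamma(1+a_n)/\Gamma(1+\lambda)\to1$ by continuity of $\Gamma$;
\item $n^{-a_n}/n^{-\lambda}=\exp\{-(a_n-\lambda)\log n\}\to1$ exactly under the extra assumption $(a_n-\lambda)\log n\to0$.
\end{itemize}
The same Slutsky and moment-rescaling argument as in Step 2 then concludes.

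The only step that needs care is the uniformity in Step 1, since $a_n$ varies with $n$. I would handle it with the explicit bound
\[
        \log\frac{\Gamma(n+1+a)}{\Gamma(n+1)}=a\log n+\OO\!\left(\frac{1+a^2}{n}\right),
\]
which is uniform for $a$ in compacts.
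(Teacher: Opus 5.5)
Your proposal is correct and follows essentially the same route as the paper: the gamma-ratio form of $s_n$ from Lemma~\ref{lem:reset-uniform-laplace} with the uniform asymptotic $\Gamma(n+1)/\Gamma(n+1+a_n)=n^{-a_n}(1+o(1))$, then Theorem~\ref{thm:reset-rare-success} via Lemma~\ref{lem:reset-equal-hypotheses}(b) with a deterministic change of normalization, and finally the three-factor comparison under $(a_n-\lambda)\log n\to0$. Your explicit Stirling bound justifies the uniformity in $a_n$ in slightly more detail; the paper simply cites the uniform gamma-ratio asymptotic at that step.
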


\begin{proof}
By Lemma~\ref{lem:reset-uniform-laplace},
\[
        s_n=\frac{\Gamma(n+1)\Gamma(1+a_n)}{\Gamma(n+1+a_n)}.
\]
Since $a_n\to\lambda\in(0,\infty)$, the gamma-ratio asymptotic, uniformly for $a_n$ in any compact subinterval of $(0,\infty)$, gives
\[
        \frac{\Gamma(n+1)}{\Gamma(n+1+a_n)}=n^{-a_n}(1+o(1)).
\]
Thus
\[
        s_n=\Gamma(1+a_n)n^{-a_n}(1+o(1)).
\]
The first distributional and moment conclusions follow from Theorem~\ref{thm:reset-rare-success} and Lemma~\ref{lem:reset-equal-hypotheses}, because
\[
        \frac{\rho_n\Gamma(1+a_n)n^{-a_n}}{\rho_ns_n}\to1.
\]
If $(a_n-\lambda)\log n\to0$, then $n^{-a_n}\sim n^{-\lambda}$, $\Gamma(1+a_n)\to\Gamma(1+\lambda)$, and $\rho_n\sim\lambda/n$.  Therefore
\[
        \rho_n\Gamma(1+a_n)n^{-a_n}
        \sim \frac{\lambda\Gamma(\lambda+1)}{n^{\lambda+1}},
\]
which gives the simplified normalization.
\end{proof}

\begin{corollary}[Reset coupon equally likely as each standard coupon]\label{cor:reset-equal-reset-standard}
Assume
\[
        \rho_n=\frac1{n+1},
        \qquad
        p_{i,n}=\frac1{n+1},
        \quad 1\le i\le n.
\]
Then
\[
        s_n=\frac1{n+1},
\]
\[
        \Ebb T_n=n(n+1),
\]
and
\[
        \frac{T_n}{n(n+1)}\Rightarrow\Exp(1).
\]
All fixed positive integer moments converge to those of $\Exp(1)$.
\end{corollary}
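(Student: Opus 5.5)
The plan is to specialize the exact formulas of Lemma~\ref{lem:reset-uniform-laplace} and Corollary~\ref{cor:reset-exact-mean}, and then to invoke Theorem~\ref{thm:reset-rare-success} for the limit law. With $\rho_n=1/(n+1)$ and $q_n=n/(n+1)$, the standard coupons are equally likely, so the uniform formula applies. The parameter is
\[
        a_n=\frac{n\rho_n}{q_n}=\frac{n\cdot\frac1{n+1}}{\frac{n}{n+1}}=1 .
\]
The product formula then telescopes:
\[
        s_n=\prod_{j=1}^n\frac{j}{j+1}=\frac1{n+1}.
\]
Equivalently, $\Gamma(n+1)\Gamma(2)/\Gamma(n+2)=1/(n+1)$.

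Next, Corollary~\ref{cor:reset-exact-mean} gives
\[
        \Ebb T_n=\frac{1-s_n}{\rho_ns_n}=\frac{n/(n+1)}{1/(n+1)^2}=n(n+1).
\]
The normalization $\rho_ns_n=(n+1)^{-2}$ differs from $1/(n(n+1))$ by a factor $n/(n+1)\to1$. Hence it suffices to prove $\rho_ns_nT_n\Rightarrow\Exp(1)$ with moment convergence, and then rescale by the deterministic factor.

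For the limit, I would verify the hypotheses of Theorem~\ref{thm:reset-rare-success}. This is the regime $\rho_n\sim\lambda/n$ with $\lambda=1$, so Lemma~\ref{lem:reset-equal-hypotheses}(b) applies directly. As a self-contained check, $s_n=1/(n+1)\to0$. For the derivative condition, the logarithmic-derivative identity from that lemma gives
\[
        \frac{q_n\phi_n'(q_n)}{s_n}=\sum_{j=1}^n\frac{1}{\rho_n+jq_n/n}
        =(n+1)\sum_{j=1}^n\frac1{1+j}=O(n\log n).
\]
Therefore
\[
        \rho_nq_n\phi_n'(q_n)=\frac{1}{(n+1)^2}\,O(n\log n)\to0 .
\]
Theorem~\ref{thm:reset-rare-success} then yields $T_n/(n+1)^2\Rightarrow\Exp(1)$ and $\Ebb\bigl(T_n/(n+1)^2\bigr)^r\to r!$. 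Multiplying by $\bigl((n+1)/n\bigr)^r\to1$ transfers both statements to $T_n/(n(n+1))$.

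No step is a genuine obstacle, since everything is explicit. The only point requiring care is the harmonic-sum bound in the derivative condition, which carries an extra $\log n$ factor. That factor is easily absorbed because $\rho_ns_n=(n+1)^{-2}$ decays faster than $n\log n$ grows.
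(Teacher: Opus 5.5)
Your proposal is correct and follows essentially the same route as the paper: set $a_n=1$ in Lemma~\ref{lem:reset-uniform-laplace} to get $s_n=1/(n+1)$, apply Corollary~\ref{cor:reset-exact-mean}, invoke Theorem~\ref{thm:reset-rare-success} with $\rho_ns_n=(n+1)^{-2}$, and rescale by $(n+1)/n\to1$. Your explicit check of the derivative hypothesis, $\rho_nq_n\phi_n'(q_n)=\frac{1}{n+1}\sum_{j=1}^n\frac{1}{j+1}\to0$, spells out a verification the paper leaves implicit; the paper's proof cites Theorem~\ref{thm:reset-rare-success} directly, and the hypotheses are covered by Lemma~\ref{lem:reset-equal-hypotheses}(b) with $\lambda=1$.
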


\begin{proof}
In this case $q_n=n/(n+1)$ and
\[
        a_n=\frac{n\rho_n}{q_n}=1.
\]
Therefore Lemma~\ref{lem:reset-uniform-laplace} gives
\[
        s_n=\frac{\Gamma(n+1)\Gamma(2)}{\Gamma(n+2)}=\frac1{n+1}.
\]
Corollary~\ref{cor:reset-exact-mean} gives
\[
        \Ebb T_n=\frac{1-s_n}{\rho_ns_n}
        =\frac{1-1/(n+1)}{(1/(n+1))(1/(n+1))}
        =n(n+1).
\]
Moreover,
\[
        \rho_ns_n=(n+1)^{-2}.
\]
Theorem~\ref{thm:reset-rare-success} therefore gives
\[
        \frac{T_n}{(n+1)^2}\Rightarrow\Exp(1)
\]
with convergence of every fixed positive integer moment.  Since $n(n+1)\sim (n+1)^2$, the displayed normalization follows.
\end{proof}

\subsection{The negligible-reset regime}

The rare-success theorem applies when one successful reset-free excursion is itself rare.  If resets are much rarer than the ordinary coupon collector completion window, the ordinary Gumbel law returns.

\begin{theorem}[Negligible resets give the ordinary Gumbel law]\label{thm:reset-negligible-reset}
Assume equal standard probabilities and
\[
        \rho_n n\log n\to0.
\]
Let $T_n$ be the reset-button completion time.  Then, for every $y\in\Rbb$,
\[
        \Pbb\left(T_n\le n\log n+ny\right)
        \longrightarrow
        \exp(-e^{-y}).
\]
Equivalently,
\[
        \frac{T_n-n\log n}{n}\Rightarrow G,
\]
where $G$ is the standard coupon-collector Gumbel variable with distribution function
\[
        \Pbb(G\le y)=\exp(-e^{-y}).
\]
\end{theorem}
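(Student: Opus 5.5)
The plan is a coupling argument. On the time window $t_n:=\floor{n\log n+ny}$ (or a slightly larger window of order $n\log n$), resets are so rare that with high probability none occurs. Completion therefore coincides with ordinary uniform coupon-collector completion in standard-draw time. Calendar time and standard-draw time also differ by only $o(n)$ on this window.

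Concretely, I will realize the reset-button process from an i.i.d. sequence of draws. Let $D_n$ be the calendar index of the first reset draw, so $D_n$ is geometric with parameter $\rho_n$, and let $C_n$ be the ordinary collector time of the standard draws read in order. Fix $y\in\Rbb$ and put $t_n=\floor{n\log n+ny}$.

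On the event $\{D_n>t_n\}$, every draw up to time $t_n$ is standard, so the process agrees there with an ordinary collector. This gives
\[
        \{T_n\le t_n\}\cap\{D_n>t_n\}=\{C'_n\le t_n\}\cap\{D_n>t_n\},
\]
where $C'_n$ is the calendar time of ordinary completion. The union bound yields
\[
        \Pbb(D_n\le t_n)\le \rho_n t_n=\rho_n(n\log n+O(n))\to0
\]
by hypothesis, so $|\Pbb(T_n\le t_n)-\Pbb(C'_n\le t_n)|\to0$.

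It remains to compare $C'_n$ with $C_n$. The count of standard draws among the first $t$ calendar draws is $t$ minus a $\Bin(t,\rho_n)$ variable. On $\{D_n>t_n\}$ this count is exactly $t_n$. Hence on that event $\{C'_n\le t_n\}=\{C_n\le t_n\}$, and
\[
        \Pbb(T_n\le t_n)=\Pbb(C_n\le t_n)+o(1).
\]
The classical Erd\H{o}s--R\'enyi limit \cite{ErdosRenyi} for the uniform collector gives $\Pbb(C_n\le n\log n+ny)\to\exp(-e^{-y})$. Replacing $n\log n+ny$ by its floor is harmless because the Gumbel limit function is continuous.

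The only subtle point is the independence structure: conditionally on being standard, the labels are i.i.d. uniform, independent of the reset indicators. This is already used in Lemma~\ref{lem:reset-one-excursion}. Equivalently, one can argue directly from Proposition~\ref{prop:reset-regen-representation}: the probability that the first excursion is shorter than $t_n$ is at most $\rho_n t_n\to0$. On the complementary event we have $G_n=1$ or $C_{n,1}>t_n$, and in either case $T_n\le t_n$ exactly when $C_{n,1}\le t_n$. Pointwise convergence of the distribution functions at every $y$ is the convergence in distribution $(T_n-n\log n)/n\Rightarrow G$. I expect no step to be a real obstacle; the one place to be careful is making sure the error term $\rho_n t_n$ uses the exact window length rather than $n\log n$ alone, and this is fine since $y$ is fixed.
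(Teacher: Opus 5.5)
Your proposal is correct and is essentially the paper's proof. You couple the reset-button process with an ordinary uniform collector through the shared standard labels, bound the probability of a reset before $t_n=\floor{n\log n+ny}$ by $\rho_n t_n\to0$, and conclude from the classical Erd\H{o}s--R\'enyi Gumbel limit. Your alternative phrasing through the first excursion of Proposition~\ref{prop:reset-regen-representation} is the same argument in regenerative language, and the comparison of calendar time with standard-draw time is unnecessary, since the two coincide exactly on the no-reset event.
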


\begin{proof}
Couple the reset-button process to an ordinary uniform coupon collector by using the same sequence of standard coupon labels whenever no reset occurs.  Let
\[
        t_n(y):=\floor{n\log n+ny}.
\]
The probability that at least one reset occurs before time $t_n(y)$ is at most
\[
        \rho_n t_n(y)=o(1).
\]
On the event that no reset occurs before $t_n(y)$, the reset-button process and the ordinary uniform coupon collector have the same completion status at time $t_n(y)$.  Hence
\[
        \left|
        \Pbb(T_n\le t_n(y))
        -\Pbb(C_n^{\mathrm{ord}}\le t_n(y))
        \right|
        \le \Pbb(\text{at least one reset before }t_n(y))=o(1),
\]
where $C_n^{\mathrm{ord}}$ is the ordinary uniform coupon collector completion time in calendar draws.  The classical coupon collector theorem \cite{ErdosRenyi,Feller} gives
\[
        \Pbb(C_n^{\mathrm{ord}}\le n\log n+ny)\to\exp(-e^{-y}).
\]
This proves the claim.
\end{proof}

\begin{remark}[Transition window]
The critical transition between Theorem~\ref{thm:reset-negligible-reset} and the rare-success regime occurs around
\[
        \rho_n\asymp \frac1{n\log n}.
\]
This critical window requires a separate analysis.  In that window, the identity $s_n=\Ebb q_n^{C_n}$ and the classical estimate $C_n/(n\log n)\to1$ indicate that, if $\rho_nn\log n\to\lambda\in(0,\infty)$, one reset-free attempt has success probability tending to $e^{-\lambda}$, not to zero.  The regenerative representation points to a limiting description on the $n\log n$ scale in terms of a finite geometric sum of reset-cycle lengths, with success probability related to $e^{-\lambda}$.  A full treatment lies outside the present scope.
\end{remark}

\section{Combined stationary-entry details}\label{app:combined-high-tail}

This appendix contains the details deferred from the combined clumsy-careless model: the high-tail comparison, one-block clump control, mixing and burn-in, and the uniform tail bound used for moment convergence.  The high-tail proof repeats the careless argument with the global survival parameter $q$ replaced by $S$ and with the selected-refresh factor $Q^k$ carried through the lucky-climb weights.

The proof below is the combined analogue of Theorem~\ref{thm:careless-high-tail} and Lemmas~\ref{lem:infinite-chain}--\ref{lem:normalized-propagation}.  The only new feature is the selected-refresh factor $Q$ carried through the lucky-climb weights.

\begin{proposition}[Combined high-tail comparison and constant]\label{prop:combined-high-tail-appendix}
Fix $\alpha\in[0,1)$ and $\beta\in(0,1)$, and put $Q=1-\alpha$ and $S=1-\beta$.  Let $\nu_n^{\alpha,\beta}$ be the stationary law of the combined count chain \eqref{eq:combined-transition}, and let $w_{n,k}^{\alpha,\beta}$ be defined by \eqref{eq:combined-lucky-scale}.  There are constants $0<c<C<\infty$ and an integer $M<\infty$, depending only on $\alpha,\beta$, such that, for all sufficiently large $n$ and all $M\le k\le n$,
\[
        c w_{n,k}^{\alpha,\beta}
        \le \nu_n^{\alpha,\beta}(k)
        \le C w_{n,k}^{\alpha,\beta}.
\]
Moreover,
\[
        \nu_n^{\alpha,\beta}(n)
        \sim
        \frac{1}{(S;S)_\infty}
        \frac{n!}{n^n}Q^nS^{n(n+1)/2}.
\]
\end{proposition}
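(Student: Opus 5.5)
The plan is to repeat the three-step careless argument (Theorem~\ref{thm:careless-high-tail}, Lemmas~\ref{lem:infinite-chain}--\ref{lem:normalized-propagation}) with $q$ replaced by $S$. The selected-refresh factor $Q$ is carried through the up-rate $u_k^{\alpha,\beta}=\frac{n-k}{n}QS^{k+1}$, so $w^{\alpha,\beta}_{n,k+1}=w^{\alpha,\beta}_{n,k}u_k^{\alpha,\beta}$.

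\emph{Uniform comparison.} For the lower bound, \eqref{eq:combined-transition} gives
\[
        \Ebb[K_{t+1}\mid K_t=k]=S\bigl(k-k/n\bigr)+QS,
\]
so the stationary mean is $QS/(1-S+S/n)\le QS/\beta$. Choose a fixed $M_0$ with $\nu_n(\{0,\dots,M_0\})\ge 3/4$. From any such state all coupons are lost with probability at least $\beta^{M_0+1}$, so $\nu_n(0)\ge c_0>0$. Since the climb $0\to1\to\cdots\to k$ has probability $w^{\alpha,\beta}_{n,k}$, this gives $\nu_n(k)\ge c_0w^{\alpha,\beta}_{n,k}$.

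For the upper bound, use the cut $A_k=\{0,\dots,k\}$. Jumps up have size at most one, which is again true here because the pre-thinning count is at most $k+1$. Hence $\nu_n(k)u_k\ge\nu_n(k+1)P(k+1,A_k)$. From level $\ell$, the event $K_{t+1}\ge\ell$ requires at most one loss among at most $\ell+1$ pre-thinning coupons, so $1-P(\ell,A_{\ell-1})\le C_1\ell S^{\ell}$. Iterating from a fixed $M$ and using $\sum_r rS^r<\infty$ gives $\nu_n(k)\le Cw^{\alpha,\beta}_{n,k}$.

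\emph{Limit chain.} At fixed levels, $P_n(k,j)\to\Pbb\bigl(\Bin(k,S)+\mathrm{Bernoulli}(QS)=j\bigr)$. The limiting chain $Y_{t+1}=\Bin(Y_t,S)+\mathrm{Bernoulli}(QS)$ is an immigration--thinning chain whose stationary law is $\sum_{r\ge1}B_r$ with independent $B_r\sim\mathrm{Bernoulli}(QS^r)$. This is checked as in Lemma~\ref{lem:infinite-chain}: the immigrant is $\mathrm{Bernoulli}(QS)$, and thinning sends $\mathrm{Bernoulli}(QS^r)$ to $\mathrm{Bernoulli}(QS^{r+1})$. The $q$-binomial theorem with $w=Q(z-1)$ then gives
\[
        \pi(k)=\frac{Q^kS^{k(k+1)/2}}{(S;S)_k}\bigl(1+\OO(kS^k)\bigr)\sim\frac{Q^kS^{k(k+1)/2}}{(S;S)_\infty}.
\]
The alternating tail is bounded exactly as before, since $Q\le1$. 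Tightness follows from the mean bound, and the domination $P_n(k,j)\le C_j(k+1)^j\beta^{k-j}$ lets us pass to the limit exactly as in Lemma~\ref{lem:careless-local-convergence}. This yields $m_n=o(\sqrt n)$ with $\nu_n(m_n)/\pi(m_n)\to1$, and hence $\nu_n(m_n)/w^{\alpha,\beta}_{n,m_n}\to(S;S)_\infty^{-1}$.

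\emph{Propagation.} Set $a_{n,k}=\nu_n(k)/w^{\alpha,\beta}_{n,k}$ and write the cut identity as $a_{n,k}=d_ka_{n,k+1}+R_k/w^{\alpha,\beta}_{n,k+1}$. Here $1-d_k=\OO(kS^k)$. For the remainder we have $w_{j}/w_{k+1}\le (QS^{k+2})^{j-k-1}\le S^{(j-k-1)(k+2)}$, so $R_k/w_{k+1}=\OO(S^k)$. Therefore $a_{n,k+1}/a_{n,k}=1+\OO(kS^k)$ uniformly, and summing from $m_n$ to $n$ gives $a_{n,n}\sim a_{n,m_n}$. This yields the constant at level $n$.

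I expect the main obstacle to be the local convergence step with a growing window: the finite-to-infinite passage and the diagonal choice of $m_n$. The rest is a transcription of the careless proof with the extra factor $Q\le1$, which only helps the bounds. I also need the upper-tail constants to be uniform in $\alpha$ for fixed $(\alpha,\beta)$, which is automatic.
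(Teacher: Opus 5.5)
Your proposal is correct and follows the paper's proof essentially step for step. The steps match: the lower bound via $\nu_n(0)\ge c_0$ and the lucky climb, the cut-flux upper bound with $1-d_k=\OO(kS^k)$, the immigration--thinning limit chain with its product Bernoulli$(QS^r)$ stationary law and $q$-binomial tail (with $w=Q(z-1)$), the diagonal choice of $m_n=o(\sqrt n)$, and normalized cut-flux propagation. The one point to state explicitly is that the bound $R_k/(a_{n,k}w_{n,k+1})=\OO(S^k)$ in the propagation step uses the lower comparison $a_{n,k}\ge c$ from your first step.
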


\begin{proof}
For $0\le k<n$, the upward transition probability is
\[
        u_k^{\alpha,\beta}=P(k,k+1)=\frac{n-k}{n}QS^{k+1},
\]
and $w_{n,k+1}^{\alpha,\beta}=w_{n,k}^{\alpha,\beta}u_k^{\alpha,\beta}$.

We first obtain the uniform scale comparison.  Under stationarity,
\[
        \Ebb_{\nu_n^{\alpha,\beta}}K
        =\frac{SQ}{1-S(1-1/n)}
        \le \frac{SQ}{1-S}.
\]
Choose a fixed integer $M_0$ larger than $4SQ/(1-S)$.  Markov's inequality gives
\[
        \nu_n^{\alpha,\beta}(\{0,1,\ldots,M_0\})\ge\frac34.
\]
From any state $j\le M_0$, after the selected-type refresh there are at most $M_0+1$ present coupons.  All of them are lost during thinning with probability at least $(1-S)^{M_0+1}$.  Hence
\[
        \nu_n^{\alpha,\beta}(0)
        \ge \frac34(1-S)^{M_0+1}=:c_0>0.
\]
Starting from $0$, the strictly increasing path $0\to1\to\cdots\to k$ has probability
\[
        \prod_{r=0}^{k-1}u_r^{\alpha,\beta}=w_{n,k}^{\alpha,\beta}.
\]
Therefore
\[
        \nu_n^{\alpha,\beta}(k)
        \ge c_0 w_{n,k}^{\alpha,\beta},
        \qquad 0\le k\le n.
\]

For the upper bound, put $A_k=\{0,1,\ldots,k\}$.  Upward jumps have size at most one, so stationarity gives the cut-flux identity
\[
        \nu_n^{\alpha,\beta}(k)u_k^{\alpha,\beta}
        =\sum_{j=k+1}^n \nu_n^{\alpha,\beta}(j)P(j,A_k).
\]
Keeping only the $j=k+1$ term,
\[
        \nu_n^{\alpha,\beta}(k+1)P(k+1,A_k)
        \le \nu_n^{\alpha,\beta}(k)u_k^{\alpha,\beta}.
\]
Let $\ell=k+1$.  Starting from level $\ell$, ending at level at least $\ell$ after one step requires a very small number of thinning losses.  If the selected coupon is already present, then an upper bound is obtained by requiring no loss among the $\ell$ currently present coupons.  If the selected coupon is missing, then either the selected refresh fails and again no loss among the original $\ell$ coupons is needed, or the refresh succeeds and at most one loss among at most $\ell+1$ present coupons is allowed.  Therefore
\[
\begin{aligned}
        P(\ell,\{\ell,\ell+1,\ldots,n\})
        &\le S^\ell+Q\left(S^{\ell+1}+(\ell+1)(1-S)S^\ell\right)  \\
        &\le C_{\alpha,\beta}\ell S^\ell,
\end{aligned}
\]
and therefore
\[
        P(k+1,A_k)\ge 1-C_{\alpha,\beta}(k+1)S^{k+1}.
\]
Choose $M\ge M_0$ so large that the last error is at most $1/2$ for $k\ge M$.  Iterating gives
\[
        \nu_n^{\alpha,\beta}(k)
        \le \nu_n^{\alpha,\beta}(M)
        \prod_{r=M}^{k-1}u_r^{\alpha,\beta}
        \prod_{r=M}^{k-1}\frac{1}{1-C_{\alpha,\beta}(r+1)S^{r+1}}.
\]
The infinite product is finite because $\sum rS^r<\infty$.  Also $\nu_n^{\alpha,\beta}(M)\le1$, while $\prod_{r=0}^{M-1}u_r^{\alpha,\beta}$ is bounded below by a positive constant depending only on $\alpha,\beta$ for all sufficiently large $n$.  Hence
\[
        \nu_n^{\alpha,\beta}(k)
        \le C w_{n,k}^{\alpha,\beta},
        \qquad M\le k\le n.
\]
This proves the uniform comparison.

We now identify the top constant.  The local limiting chain is obtained by fixing $k$ and sending $n\to\infty$ in \eqref{eq:combined-transition}:
\[
        Y_{t+1}\mid Y_t=k\sim \Bin(k,S)+\mathrm{Bernoulli}(QS).
\]
Its stationary law is
\[
        Y_\infty\stackrel d=\sum_{r=1}^\infty B_r,
        \qquad B_r\sim\mathrm{Bernoulli}(QS^r),
\]
with the $B_r$ independent.  Old particles with parameter $QS^r$ survive one more thinning step and become particles with parameter $QS^{r+1}$, while the refreshed selected coupon contributes a new Bernoulli$(QS)$ particle.  This law is a stationary probability distribution by direct substitution.  Since the limiting chain is irreducible, positive recurrence follows from \cite[Theorem~21.13]{LevinPeres}; uniqueness of the stationary distribution then follows from irreducibility.

Let $\varpi$ denote this limiting stationary law.  Its tail satisfies
\begin{equation}\label{eq:appendix-combined-infinite-tail}
        \varpi(k)=\Pbb(Y_\infty=k)
        \sim \frac{Q^kS^{k(k+1)/2}}{(S;S)_\infty}.
\end{equation}
To prove this, write the generating function as
\[
        G_Q(z)=\prod_{r=1}^{\infty}(1-QS^r+QS^rz)
              =\prod_{r=1}^{\infty}(1+Q(z-1)S^r).
\]
By the $q$-binomial theorem \cite[Chapter~2]{AndrewsPartitions}, with base $S$,
\[
        G_Q(z)=\sum_{m=0}^{\infty}
        \frac{Q^mS^{m(m+1)/2}}{(S;S)_m}(z-1)^m.
\]
Extracting the coefficient of $z^k$ gives
\[
        \varpi(k)=\sum_{m=k}^{\infty}(-1)^{m-k}\binom{m}{k}
        \frac{Q^mS^{m(m+1)/2}}{(S;S)_m}.
\]
With $m=k+\ell$,
\[
\frac{\varpi(k)}{Q^kS^{k(k+1)/2}}
        =\sum_{\ell=0}^{\infty}(-1)^\ell\binom{k+\ell}{\ell}
        \frac{Q^\ell S^{\ell k+\ell(\ell+1)/2}}{(S;S)_{k+\ell}}.
\]
The $\ell=0$ term tends to $(S;S)_\infty^{-1}$.  Since $0<Q\le1$, the absolute value of the remaining terms is at most
\[
        \frac1{(S;S)_\infty}
        \sum_{\ell=1}^{\infty}\binom{k+\ell}{\ell}S^{\ell k}
        =\frac{(1-S^k)^{-k-1}-1}{(S;S)_\infty},
\]
which tends to zero because $(k+1)S^k\to0$.  This proves \eqref{eq:appendix-combined-infinite-tail}.

We next prove local convergence of the finite stationary laws to $\varpi$.  Tightness follows from the stationary mean bound above.  For fixed $k,j$ the finite transition probabilities converge to the limiting transition probabilities.  Let a subsequence of $\nu_n^{\alpha,\beta}$ converge weakly to a law $\nu$.  For fixed $j$, stationarity gives
\[
        \nu_n^{\alpha,\beta}(j)=\sum_{k=0}^n\nu_n^{\alpha,\beta}(k)P_n^{\alpha,\beta}(k,j).
\]
We justify passage to the limit.  Since ending at a fixed level $j$ from a large level $k$ requires losing all but at most $j$ of the old coupons, there is a constant $C_{j,\alpha,\beta}<\infty$ such that, for all large $k$ and uniformly in $n$,
\[
        P_n^{\alpha,\beta}(k,j)
        \le C_{j,\alpha,\beta}(k+1)^j(1-S)^{k-j}.
\]
Consequently,
\[
        \sup_n\sum_{k>K}\nu_n^{\alpha,\beta}(k)P_n^{\alpha,\beta}(k,j)\to0,
        \qquad K\to\infty,
\]
and the same estimate holds for the limiting kernel.  Passing first to the limit on finite partial sums and then sending the truncation to infinity shows that $\nu$ is stationary for the limiting chain.  By uniqueness of the limiting stationary law, $\nu=\varpi$.  Hence the full sequence converges locally to $\varpi$.

As in Lemma~\ref{lem:careless-local-convergence}, local convergence and positivity of $\varpi$ on finite sets allow a diagonal choice of a deterministic sequence $m_n\to\infty$ with $m_n=o(\sqrt n)$ such that
\[
        \frac{\nu_n^{\alpha,\beta}(m_n)}{\varpi(m_n)}\to1.
\]
Since $(n)_{m_n}/n^{m_n}\to1$, \eqref{eq:appendix-combined-infinite-tail} gives
\[
        a_{n,m_n}^{\alpha,\beta}
        :=\frac{\nu_n^{\alpha,\beta}(m_n)}{w_{n,m_n}^{\alpha,\beta}}
        \longrightarrow \frac{1}{(S;S)_\infty}.
\]

It remains to propagate this normalized constant from $m_n$ to $n$.  For $k\ge M$, the same cut-flux identity may be written as
\[
        \nu_n^{\alpha,\beta}(k)u_k^{\alpha,\beta}
        =\nu_n^{\alpha,\beta}(k+1)d_k+R_k,
\]
where
\[
        d_k:=P(k+1,A_k),
        \qquad
        R_k:=\sum_{j=k+2}^n\nu_n^{\alpha,\beta}(j)P(j,A_k).
\]
As above,
\[
        1-d_k\le C_{\alpha,\beta}kS^k.
\]
Using the uniform upper comparison and $P(j,A_k)\le1$,
\[
        R_k\le C\sum_{j=k+2}^n w_{n,j}^{\alpha,\beta}.
\]
For $j\ge k+2$,
\[
        \frac{w_{n,j}^{\alpha,\beta}}{w_{n,k+1}^{\alpha,\beta}}
        =\prod_{r=k+1}^{j-1}u_r^{\alpha,\beta}
        \le S^{(j-k-1)(k+2)},
\]
and therefore
\[
        \frac{R_k}{w_{n,k+1}^{\alpha,\beta}}
        \le C\sum_{h=1}^{\infty}S^{h(k+2)}
        =C\frac{S^{k+2}}{1-S^{k+2}}
        =\OO_{\alpha,\beta}(S^k).
\]
The uniform lower comparison gives $a_{n,k}^{\alpha,\beta}\ge c>0$.  Hence
\[
        \rho_{n,k}:=\frac{R_k}{a_{n,k}^{\alpha,\beta}w_{n,k+1}^{\alpha,\beta}}
        =\OO_{\alpha,\beta}(S^k).
\]
For all large $k$, both $\rho_{n,k}$ and $1-d_k$ are less than $1/2$, and the identity
\[
        a_{n,k}^{\alpha,\beta}
        =d_k a_{n,k+1}^{\alpha,\beta}
          +\frac{R_k}{w_{n,k+1}^{\alpha,\beta}}
\]
implies
\[
        \frac{a_{n,k+1}^{\alpha,\beta}}{a_{n,k}^{\alpha,\beta}}
        =\frac{1-\rho_{n,k}}{d_k}
        =1+\OO_{\alpha,\beta}(kS^k).
\]
Since $\sum_{k\ge m_n}kS^k=o(1)$, summing logarithms gives, uniformly in the upper endpoint $k\le n$,
\[
        a_{n,k}^{\alpha,\beta}
        =a_{n,m_n}^{\alpha,\beta}(1+o(1)).
\]
Taking $k=n$ gives
\[
        \nu_n^{\alpha,\beta}(n)
        =a_{n,n}^{\alpha,\beta}w_{n,n}^{\alpha,\beta}
        \sim
        \frac{1}{(S;S)_\infty}
        \frac{n!}{n^n}Q^nS^{n(n+1)/2},
\]
which is the desired top-tail constant.
\end{proof}

\subsection{Clump control, mixing, and burn-in}

\begin{lemma}[Combined one-block clump control]\label{lem:combined-clump}
Let $b_n\le n^M$ for some fixed $M<\infty$.  With
\[
        E_t:=\1\{K_{t-1}<n,\ K_t=n\},
        \qquad
        N_b:=\sum_{t=1}^{b_n}E_t,
\]
one has
\[
        \Pbb_{\nu_n^{\alpha,\beta}}(N_b\ge1)
        =b_n\mu_n^{\alpha,\beta}(1+o(1)).
\]
\end{lemma}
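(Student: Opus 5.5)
The plan is to copy the proof of Lemma~\ref{lem:careless-clump} and reduce the claim, via the standard one-block remark after Theorem~\ref{thm:stationary-entry}, to a first-moment identity and a negligible second factorial moment. By stationarity of $\nu_n^{\alpha,\beta}$ for the count chain \eqref{eq:combined-transition},
\[
        \Ebb_{\nu_n^{\alpha,\beta}}N_b=b_n\mu_n^{\alpha,\beta}.
\]
It then suffices to show $\Ebb_{\nu_n^{\alpha,\beta}}[(N_b)_2]=o(b_n\mu_n^{\alpha,\beta})$. Once that holds, the sandwich $0\le \Ebb N_b-\Pbb(N_b\ge1)\le\Ebb[(N_b)_2]$ gives the claim.

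The key one-step estimate will be a uniform bound
\[
        \Pbb_x(K_u=n)\le S^n
        \qquad\text{for every starting state }x\text{ and every }u\ge1.
\]
To get it, condition on the state at time $u-1$ and on the selected-type refresh in round $u$. After the refresh there are at most $n$ present coupons. Reaching $n$ after thinning requires that exactly $n$ are present and that all $n$ are retained, which has conditional probability at most $S^n$. For comparison, the exact values are $P(n,n)=QS^n$ and $P(n-1,n)=\frac1n QS^n$, and $P(j,n)=0$ for $j\le n-2$.

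Since $E_u\le\1\{K_u=n\}$, this gives
\[
        \Ebb_n\sum_{u=1}^{b_n}E_u\le b_nS^n=o(1),
\]
because $b_n\le n^M$ is polynomial and $S<1$ is fixed (this is where $\beta>0$ is used). Next apply the Markov property at time $s$: on $\{E_s=1\}$ the chain sits at state $n$. Hence
\[
        \Ebb_{\nu_n^{\alpha,\beta}}[(N_b)_2]
        =2\sum_{1\le s<t\le b_n}\Pbb(E_s=1,E_t=1)
        \le 2\,b_n\mu_n^{\alpha,\beta}\;\Ebb_n\sum_{u=1}^{b_n}E_u
        \le 2\,b_n\mu_n^{\alpha,\beta}\,b_nS^n,
\]
which is $o(b_n\mu_n^{\alpha,\beta})$.

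I expect no real obstacle. The only point needing care is the uniform bound $\Pbb_x(K_u=n)\le S^n$, which holds regardless of the refresh outcome and regardless of whether $\alpha=0$. The argument uses no high-tail information, and in particular Proposition~\ref{prop:combined-high-tail-appendix} is not needed here.
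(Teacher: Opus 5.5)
Your proposal is correct and follows essentially the same route as the paper. Both proofs combine the exact first moment by stationarity, the uniform bound $\Pbb_x(K_u=n)\le S^n$ (the paper writes $QS^n\le S^n$), the Markov property at an entry time to get $\Ebb[(N_b)_2]\le 2b_n\mu_n^{\alpha,\beta}\,b_nS^n=o(b_n\mu_n^{\alpha,\beta})$, and the first-moment/second-factorial-moment sandwich.
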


\begin{proof}
The first moment is exact by stationarity:
\[
        \Ebb_{\nu_n^{\alpha,\beta}}N_b=b_n\mu_n^{\alpha,\beta}.
\]
For any starting state $x$ and any $u\ge1$,
\[
        \Pbb_x(K_u=n)\le QS^n\le S^n,
\]
because the last transition into state $n$ requires all $n$ present coupons to survive the thinning step, and also requires the selected-type refresh to succeed when the selected type is relevant.  Since $E_u\le\1\{K_u=n\}$,
\[
        \Ebb_n\sum_{u=1}^{b_n}E_u\le b_nS^n=o(1).
\]
Using the Markov property after a first entry,
\[
        \Ebb_{\nu_n^{\alpha,\beta}}[(N_b)_2]
        \le 2b_n\mu_n^{\alpha,\beta}\, b_nS^n
        =o(b_n\mu_n^{\alpha,\beta}).
\]
The inequalities
\[
        0\le \Ebb_{\nu_n^{\alpha,\beta}} N_b
        -\Pbb_{\nu_n^{\alpha,\beta}}(N_b\ge1)
        \le \Ebb_{\nu_n^{\alpha,\beta}}[(N_b)_2]
\]
complete the proof.
\end{proof}

\begin{lemma}[Combined fast mixing and negligible burn-in]\label{lem:combined-mixing}
Let $\widehat\pi_n^{\alpha,\beta}$ be the stationary law of the set-valued combined chain.  Then
\[
        \sup_{A\subseteq[n]}
        \|P^t(A,\cdot)-\widehat\pi_n^{\alpha,\beta}\|_{\TV}
        \le nS^t.
\]
Moreover, from the empty initial collection, with
\[
        b_n=n^3,
        \qquad
        h_n=r_n=\ceil{Cn^2},
\]
for any fixed $C>1$, all mixing and burn-in assumptions in Theorem~\ref{thm:stationary-entry} hold for the post-thinning completion state.
\end{lemma}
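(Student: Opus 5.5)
The plan mirrors the proof of Lemma~\ref{lem:careless-mixing}, with the survival parameter $q$ replaced by $S$ and the lucky-climb scale of Proposition~\ref{prop:combined-high-tail-appendix} used in place of Theorem~\ref{thm:careless-sharp-constant}.

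\emph{Mixing bound.} Run two copies of the set-valued combined chain from arbitrary initial sets $A,A'$. The coupling uses the same selected type in each round, the same refresh bit for that type, and the same thinning indicator for every type. Consider a type $i$ at which the copies disagree at time $t$, say present in one copy and absent in the other. If $i$ is selected, both copies receive the same refresh value, so the discrepancy is erased before thinning. If $i$ is not selected, the discrepancy persists only if the present copy retains $i$ through thinning. The absent copy stays absent, and a common thinning indicator cannot create a new discrepancy. Hence a discrepancy at $i$ survives one round with probability at most $(1-1/n)S\le S$. Coordinates that agree remain in agreement forever. A union bound over the $n$ types gives $\Pbb(S_t\ne S'_t)\le nS^t$, and the coupling inequality yields the total-variation bound.

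\emph{Block scales.} Take $b_n=n^3$ and $h_n=r_n=\ceil{Cn^2}$ with $C>1$. By Theorem~\ref{thm:combined-main}, more precisely the flux asymptotic derived there from Proposition~\ref{prop:combined-high-tail-appendix}, we have
\[
\log(b_n\mu_n^{\alpha,\beta})=-\tfrac12\abs{\log S}\,n^2+\OO(n).
\]
Here the factors $Q^n$, $n!/n^n$ and $(S;S)_\infty$ contribute only $\OO(n)$, and the case $\alpha=0$, $Q=1$, is harmless. On the other hand,
\[
\log(nS^{h_n})\le \log n - C\abs{\log S}\,n^2,
\]
so $C>1$ gives $\alpha_n(h_n)\le nS^{h_n}=o(b_n\mu_n^{\alpha,\beta})$. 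Clearly $h_n=o(b_n)$, and $b_n\mu_n^{\alpha,\beta}\to0$ because $\mu_n^{\alpha,\beta}$ is $e^{-\Theta(n^2)}$.

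\emph{Burn-in from the empty set.} For every $t\ge1$, the event $S_t=[n]$ after thinning requires all $n$ coupons to be present before thinning and all of them to survive it. Conditionally on the pre-thinning set, this has probability at most $S^n$, so $\Pbb_\emptyset(S_t=[n])\le S^n$. The union bound then gives $\Pbb_\emptyset(T\le r_n)\le r_nS^n=o(1)$. The mixing bound gives $\|P^{r_n}(\emptyset,\cdot)-\widehat\pi_n^{\alpha,\beta}\|_{\TV}\le nS^{r_n}=o(1)$, and $r_n\mu_n^{\alpha,\beta}=o(1)$.

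The only step needing care is the coupling claim that a selected type is synchronized whatever its prior state. This holds because the selected type is \emph{refreshed}, that is, overwritten, rather than added. It is also the point where the combined model differs from the careless one, where a drawn coupon was only added if missing. Everything else is the scale comparison, which is immediate from the quadratic exponent.
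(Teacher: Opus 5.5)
Your proposal is correct and follows essentially the same route as the paper. It uses the same synchronous coupling with per-round discrepancy survival at most $(1-1/n)S$, the same comparison of $\log(nS^{h_n})$ against $\log(b_n\mu_n^{\alpha,\beta})=-\tfrac12|\log S|n^2+\OO(n)$, and the same union-bound burn-in via $\Pbb_\emptyset(K_t=n)\le S^n$, where the paper records the slightly sharper $QS^n$. One small aside: your closing contrast with the careless model is not really a contrast, because drawing a coupon and adding it if missing also synchronizes that coordinate in the careless coupling.
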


\begin{proof}
Couple two set-valued chains using the same selected coupon, the same selected-type refresh bit, and the same thinning bits.  A discrepancy at a fixed coupon type is removed if that type is selected.  If it is not selected, the discrepancy can persist for one more round only if the present copy survives the global thinning, an event of probability $S$.  Thus a fixed initial discrepancy survives one round with probability at most $(1-1/n)S\le S$, and the union bound gives the displayed coupling estimate.

By Proposition~\ref{prop:combined-high-tail-appendix} and the identity \(\mu_n^{\alpha,\beta}=\nu_n^{\alpha,\beta}(n)(1-QS^n)\),
\[
        \log(b_n\mu_n^{\alpha,\beta})
        =-\frac{|\log S|}{2}n^2+\OO_{\alpha,\beta}(n)
\]
for $b_n=n^3$, whereas
\[
        \log(nS^{h_n})=\log n-C|\log S|n^2+\OO(1).
\]
Since $C>1$, $nS^{h_n}=o(b_n\mu_n^{\alpha,\beta})$.  Also $h_n=o(b_n)$ and $b_n\mu_n^{\alpha,\beta}\to0$.

For burn-in from the empty collection, completion after a fixed positive round requires all $n$ coupons to survive the thinning step and, if necessary, the selected coupon to be refreshed successfully.  Hence
\[
        \Pbb_\emptyset(K_t=n)\le QS^n\le S^n,
        \qquad t\ge1.
\]
The union bound gives
\[
        \Pbb_\emptyset(T_n^{\alpha,\beta}\le r_n)
        \le r_nS^n=o(1).
\]
The coupling bound gives mixing by time $r_n$, and $r_n\mu_n^{\alpha,\beta}=o(1)$ follows from the flux asymptotic.
\end{proof}

\subsection{Uniform tails for the combined collector}

\begin{proposition}[Combined uniform exponential tails]\label{prop:combined-uniform-tails}
For the fixed-parameter combined collector, there are constants $C,c\in(0,\infty)$, depending only on $\alpha,\beta$, such that
\[
        \sup_{A\subseteq[n]}
        \Pbb_A\left(\mu_n^{\alpha,\beta}T_n^{\alpha,\beta}>x\right)
        \le Ce^{-cx},
        \qquad x\ge0,
\]
for all sufficiently large $n$.  In particular, the bound holds for the empty initial collection.
\end{proposition}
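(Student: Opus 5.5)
The plan is to copy the block-restart argument from the proofs of Corollaries~\ref{cor:clumsy-moments} and \ref{cor:careless-moments}, now using the combined-model inputs from Lemmas~\ref{lem:combined-clump} and \ref{lem:combined-mixing}. Take
\[
        b_n=n^3,\qquad h_n=\ceil{C_0n^2}\ \text{with } C_0>1,\qquad L_n=b_n+h_n .
\]
The first step is a lower bound for one block, uniform over every starting set $A\subseteq[n]$:
\[
        \Pbb_A\bigl(T_n^{\alpha,\beta}\le L_n\bigr)\ge \tfrac12\, b_n\mu_n^{\alpha,\beta}
        \qquad\text{for all large } n .
\]
To get it, discard any hits during the initial gap $[0,h_n]$; such hits only help. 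Lemma~\ref{lem:combined-mixing} shows that the law of the set-valued chain at time $h_n$ is within $nS^{h_n}=o(b_n\mu_n^{\alpha,\beta})$ of $\widehat\pi_n^{\alpha,\beta}$ in total variation, uniformly in $A$.

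Next consider the event that the following active block of length $b_n$ contains a new entry into $[n]$. This event is measurable with respect to the path on that block. Lemma~\ref{lem:path-tv-comparison} therefore says its probability differs from the stationary value by at most $o(b_n\mu_n^{\alpha,\beta})$. The entry event depends only on the count coordinate, and under $\widehat\pi_n^{\alpha,\beta}$ the count has law $\nu_n^{\alpha,\beta}$. So Lemma~\ref{lem:combined-clump} gives the stationary value $b_n\mu_n^{\alpha,\beta}(1+o(1))$. An entry in the block forces $T_n^{\alpha,\beta}\le L_n$, and the one-block bound follows. Because the bound is uniform in $A$, the threshold $n$ can be chosen independently of the starting set.

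The second step iterates this with the strong Markov property at times $L_n,2L_n,\dots$. For every $m\ge0$,
\[
        \sup_A \Pbb_A\bigl(T_n^{\alpha,\beta}>mL_n\bigr)\le \bigl(1-\tfrac12 b_n\mu_n^{\alpha,\beta}\bigr)^m\le \exp\bigl\{-\tfrac12\, m\, b_n\mu_n^{\alpha,\beta}\bigr\}.
\]
Given $x\ge0$, set $m=\floor{x/(\mu_n^{\alpha,\beta}L_n)}$. Then
\[
        \Pbb_A\bigl(\mu_n^{\alpha,\beta}T_n^{\alpha,\beta}>x\bigr)\le \Pbb_A\bigl(T_n^{\alpha,\beta}>mL_n\bigr)\le \exp\Bigl\{-\tfrac12\,\frac{b_n}{L_n}\,x+\tfrac12\, b_n\mu_n^{\alpha,\beta}\Bigr\}.
\]
Since $h_n=o(b_n)$ we have $b_n/L_n\to1$, and $b_n\mu_n^{\alpha,\beta}\to0$ by Proposition~\ref{prop:combined-high-tail-appendix}. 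This yields the claim with, for instance, $c=1/4$ and $C=e$, for all sufficiently large $n$.

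The only step that needs any care is the uniformity in the starting set $A$. This comes from the fact that the coupling bound $nS^t$ in Lemma~\ref{lem:combined-mixing} is a supremum over initial states. It also relies on Lemma~\ref{lem:combined-clump} being a statement about the stationary law alone, so it carries no dependence on $A$. Everything else is a verbatim repetition of the careless argument.
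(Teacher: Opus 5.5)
Your proposal is correct and follows the paper's proof essentially verbatim: the same block scales $b_n=n^3$, $h_n=\lceil C_0n^2\rceil$, a uniform one-block lower bound $\tfrac12 b_n\mu_n^{\alpha,\beta}$ from Lemmas~\ref{lem:combined-mixing}, \ref{lem:combined-clump} and \ref{lem:path-tv-comparison}, then geometric iteration at times $L_n,2L_n,\dots$ with $b_n/L_n\to1$. The only differences are cosmetic. You apply the mixing bound to the set-valued chain directly rather than projecting it to the count coordinate, and you make the final conversion to $Ce^{-cx}$ explicit with concrete constants.
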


\begin{proof}
Use
\[
        b_n=n^3,
        \qquad h_n=\ceil{C_0n^2},
        \qquad L_n=b_n+h_n,
\]
with $C_0>1$.  The set-valued mixing estimate in Lemma~\ref{lem:combined-mixing}, projected to the count coordinate, gives
\[
        \sup_x\|P^{h_n}(x,\cdot)-\nu_n^{\alpha,\beta}\|_{\TV}
        =o(b_n\mu_n^{\alpha,\beta}).
\]
By Lemma~\ref{lem:combined-clump}, a stationary active block of length $b_n$ contains a new entry with probability $b_n\mu_n^{\alpha,\beta}(1+o(1))$.  By the path-event total-variation comparison used in Section~\ref{sec:stationary-entry-module}, the probability of this active-block entry event differs from its stationary value by at most the total-variation distance after the initial gap.  Thus, uniformly over all starting sets $A\subseteq[n]$,
\[
        \Pbb_A(T_n^{\alpha,\beta}\le L_n)
        \ge \frac12 b_n\mu_n^{\alpha,\beta}
\]
for all sufficiently large $n$.  The strong Markov property at times $L_n,2L_n,\ldots$ gives, for every integer $m\ge0$,
\[
        \Pbb_A(T_n^{\alpha,\beta}>mL_n)
        \le \left(1-\frac12b_n\mu_n^{\alpha,\beta}\right)^m
        \le \exp\{-m b_n\mu_n^{\alpha,\beta}/2\}.
\]
Since $L_n/b_n\to1$, the displayed exponential tail bound follows.
\end{proof}

\end{document}